\documentclass[11pt]{article}

\usepackage[a4paper]{geometry}
\usepackage{lmodern}        
\usepackage{graphicx,color}
\usepackage{xcolor}
\usepackage{enumitem}       
\usepackage{multirow}       
\usepackage{float}
\usepackage{amsmath,amssymb,amsfonts,amsthm,mathtools,bm}
\usepackage{dsfont}
\usepackage{bbm}
\usepackage{comment}
\usepackage{a4wide}

\usepackage{hyperref}
\usepackage{cleveref}

\newtheorem{proposition}{Proposition}[section]
\newtheorem{theorem}[proposition]{Theorem}
\newtheorem{lemma}[proposition]{Lemma}
\newtheorem{corollary}[proposition]{Corollary}
\newtheorem{assumptions}[proposition]{Assumptions}
\theoremstyle{definition}
\newtheorem{remark}[proposition]{Remark}

\newtheorem{definition}[proposition]{Definition}
\theoremstyle{remark}

\newenvironment{proofof}[1]{\smallskip\noindent{\textbf{Proof~of~#1.}}%
  \hspace{1pt}}{\hspace{-5pt}{\nobreak\quad\nobreak\hfill\nobreak%
    $\square$\vspace{2pt}\par}\smallskip\goodbreak}

\numberwithin{equation}{section}
\numberwithin{figure}{section}
\numberwithin{table}{section}

\crefname{proposition}{proposition}{propositions}
\Crefname{proposition}{Proposition}{Propositions}

\crefname{theorem}{theorem}{theorems}
\Crefname{theorem}{Theorem}{Theorems}

\crefname{lemma}{lemma}{lemmas}
\Crefname{lemma}{Lemma}{Lemmas}

\crefname{corollary}{corollary}{corollaries}
\Crefname{corollary}{Corollary}{Corollaries}

\crefname{definition}{definition}{definitions}
\Crefname{definition}{Definition}{Definitions}

\crefname{remark}{remark}{remarks}
\Crefname{remark}{Remark}{Remarks}

\crefname{example}{example}{examples}
\Crefname{example}{Example}{Examples}

\Crefname{assumptions}{Assumptions}{assumptions}

\allowdisplaybreaks[4]
\newcommand{\RR}{\mathbb{R}}
\newcommand{\ve}{\varepsilon}

\begin{document}
\title{A well-posedness theory for the 2D inhomogeneous incompressible Euler equations and a class of global-in-time solutions}
\author{Ibrahim Suleiman\footnote{Courant Institute School of Mathematics, Computing and Data Science, New York University, 251 Mercer Street, New York, NY 10012, USA, \texttt{is1647@nyu.edu}}}
\maketitle

 \begin{abstract}
     We consider the Cauchy problem for the 2D inhomogeneous incompressible Euler equations. We show that the Cauchy problem is locally well-posed in a class of $m$-fold symmetric functions at critical regularity and allowing the velocity field to grow linearly at spatial infinity. This was a problem posed by P.L. Lions in the homogeneous, not necessarily symmetric case. A key ingredient is a variable-coefficient elliptic theory developed using quasiconformal mappings and Schauder estimates in certain scale-invariant H\"{o}lder spaces. Local well-posedness allows us to derive the one-dimensional system on $\mathbb{S}^1$ satisfied by solutions with $0$-homogeneous vorticity and density. For this system, we can obtain global well-posedness for a class of initial data away from equilibrium. We are unaware of any previous nonperturbative class of time-dependent solutions to the two-dimensional inhomogeneous Euler equations for which global well-posedness has been established. Finally, for some signed global solutions, we show a dichotomy between density homogenization and averaged relaxation in infinite time.
 \end{abstract}

\section{Introduction}
In this paper, we consider the inhomogeneous incompressible Euler equations in $\RR^2$
\begin{equation}
    \label{eqn:inhomogeneous-2deuler}
    \begin{rcases}
        \partial_t \rho + u \cdot \nabla \rho  = 0 \\
        \partial_t(\rho u) + u \cdot \nabla(\rho u) + \nabla p = 0\\
        \nabla \cdot u = 0
    \end{rcases}
\end{equation}
The system governs the motion of an incompressible inhomogeneous ideal fluid with density $\rho: [0, \infty) \times  \mathbb{R}^2 \to \RR_+$, velocity field  $u: [0, \infty) \times  \mathbb{R}^2 \to \RR^2$,  and scalar pressure $p: [0, \infty) \times  \mathbb{R}^2 \to \RR$. Here $u = u(t, x), \rho = \rho(t, x)$, and $p = p(t,x)$, where $t \ge 0$ and $x = (x_1, x_2)  \in \mathbb{R}^2$. We are interested in the Cauchy problem for \eqref{eqn:inhomogeneous-2deuler}. That is, given divergence-free initial data $u_0:\mathbb{R}^2 \to \RR^2$ and $\rho_0:\mathbb{R}^2 \to \RR_+$ with $0 < c_0 \le \rho_0 \le c_1$, we seek $\rho, u$ and $p$ satisfying \eqref{eqn:inhomogeneous-2deuler} in $(0, \infty) \times \RR^2$    and $\rho(0, x) = \rho_0(x), u(0,x) = u_0(x)$. It is known in various settings that the Cauchy problem for \eqref{eqn:inhomogeneous-2deuler} is well-posed locally in time. We refer the reader to the references \cite{beirao1980euler}, \cite{beir1980euler}, \cite{chae2003local},\cite{danchin2010well}, \cite{danchin2011well}, \cite{wei2013local}, \cite{itoh1994cauchy}. There are long time existence results in \cite{danchin2011well} and \cite{bae2025long}. In the perturbative regime, there are only a few results on global well-posedness: the work \cite{chen2023nonlinear}, set in the periodic channel $\mathbb{T} \times \mathbb{R}$,  which establishes global well-posedness when the data is a small Gevrey perturbation of the Couette flow $u = (x_2, 0)$ with constant density $\rho \equiv 1$; and the related work \cite{zhao2025inviscid}, albeit set in the finite channel $\mathbb{T} \times [-1, 1]$,  which treats Gevrey perturbations of certain monotone shear flows $u=(f(x_2),0)$ with density $\rho = \theta(x_2)$; and the work \cite{zhao2026traveling} constructs traveling wave solutions with Sobolev regularity near monotone shear flows $u = (f(x_2), 0)$ with density $\rho = \theta(x_2)$, also set in $\mathbb{T} \times [-1, 1]$.  The works on local well-posedness often feature continuation criteria for the local solution, but it is an outstanding problem to decide in general whether or not the Cauchy problem is globally well-posed in time. The recent result of \cite{córdoba2026vorticityblowup2dincompressible} shows finite time singularity formation for the forced system. 

In contrast, in the case where $\rho$ is identically constant, i.e. the homogeneous incompressible Euler equations, global well-posedness in $\RR^2$ has been known since the work of Wolibner \cite{wolibner1933theoreme} from the 1930s. The result relies on the fact that the vorticity $\omega := \nabla^\perp \cdot u = \partial_{x_1}u_2 - \partial_{x_2}u_1$ is transported by the flow $u$ in 2 dimensions:
\begin{equation}
\label{eqn:hom-vort}
    \partial_t \omega + u\cdot\nabla \omega = 0
\end{equation}
and thus its $L^\infty$ norm is conserved. In fact, it is well known that given $\omega$ having sufficient decay at infinity (for instance $\omega \in L^1 \cap L^\infty$), one can uniquely determine the velocity field $u$ decaying at infinity via the Biot-Savart law
\begin{equation}
\label{eqn:hom-euler-bS-form}
    u(x) = \frac{1}{2\pi}\int_{\RR^2} \frac{(x-y)^\perp}{|x-y|^2}\omega(y)\ \mathrm{d}y.
\end{equation}

Thus one can formulate the Cauchy problem for the homogeneous Euler equations using the equations \eqref{eqn:hom-vort} and \eqref{eqn:hom-euler-bS-form} supplemented with initial data $\omega_0  = \nabla^\perp \cdot u_0$, where $\nabla \cdot u_0 = 0$. This is the vorticity formulation of the 2 dimensional Euler equations, from which a global existence and uniqueness theory can be developed. The celebrated theory of Yudovich \cite{yudovich1963non} indeed shows global existence and uniqueness of appropriately defined weak solutions in $L^1 \cap L^\infty$. A drawback of the foregoing formulation is that the initial vorticity and velocity must decay at spatial infinity to make sense of \eqref{eqn:hom-euler-bS-form}, which is physically not reasonable.  As stated for instance in \cite{ambrose2015serfati}, ``\dots full plane flow is an approximate model for flow far from boundaries, where no decay of distant vorticity should be expected".  Other well-posedness theories that have been developed do address this drawback. The first of these is the work of Benedetto, Marchioro and Pulvirenti\cite{benedetto1993euler}, who considered flows with vorticity in $L^p \cap L^\infty$, $1\le p < \infty$ and velocity satisfying $|u(t,x)| \le C(1+|x|^\beta)$, $\beta < 1$. See also the related work of \cite{brunelli2010euler}. In \cite[p.~146]{lions1996mathematical}, Lions asks whether one can obtain well-posedness for vorticity in $L^\infty$ and velocity satisfying $|u(t,x)| \le C(1+|x|)$.  We also mention the work of Serfati \cite{serfati1995solutions}, who discovered a replacement for the Biot-Savart law, permitting the construction of solutions with merely bounded initial velocity and vorticity. Similar results can be found in \cite{kelliher2015characterization}, \cite{ambrose2015serfati} and references therein. The most relevant theory to the current paper however, is the work of Elgindi and Jeong \cite{elgindi2020symmetries}, who present a well-posedness theory in the class of bounded vorticities possessing a discrete rotational symmetry. We describe the relevant results below to motivate the current paper. 
\subsection{Homogeneous Euler equations with discrete rotational symmetry}
We begin with some important definitions. 
\begin{definition}
Let $m \in \mathbb{Z}^+$. A function $f:\RR^2 \to \RR$ is said to be $m$-fold symmetric if $f(\mathcal{O}_mx) = f(x)$ for all $x \in \RR^2$, where $\mathcal{O}_m \in SO_2$ is the matrix corresponding to counterclockwise rotation by $\frac{2\pi}{m}$. A vector field $v:\RR^2 \to \RR^2$ is said to be $m$-fold symmetric if $v(\mathcal{O}_mx) = \mathcal{O}_mv(x)$ for all $x \in \RR^2$.
\end{definition}
In \cite{elgindi2020symmetries}, it was shown that given $L^\infty$ and $m$-fold symmetric $\omega$ with $m \ge 3$, one can uniquely recover $u$ in the class of $m$-fold symmetric velocity fields satisfying $|u(x)| \le C|x|$ via the formula
\begin{equation}
\label{eqn:pv-bs-law-hom}
    u(x) = \lim_{R\to\infty} \frac{1}{2\pi}\int_{|y| \le R} \frac{(x-y)^\perp}{|x-y|^2}\omega(y) \ \mathrm{d}y,
\end{equation}
the key observation being that with $m$-fold symmetry, one can replace the Biot-Savart kernel $K(x-y) := \frac{1}{2\pi}\frac{(x-y)^\perp}{|x-y|^2}$ with its $m$-fold symmetrization, $K^{(m)}(x-y) =\frac{1}{m}\sum_{j=1}^m K(x-\mathcal{O}^j_{2\pi/m}y)$ which exhibits integrable decay in $y$ when $m\ge 3$. Thus the Biot-Savart law holds in the principal value sense, and $u$ still satisfies a log-Lipschitz bound. This permits an extension of the Yudovich theory to the class of essentially bounded and $m$-fold symmetric vorticities when $m \ge 3$. Furthermore, the well-posedness theory under symmetry was extended to certain scale-invariant weighted H\"{o}lder spaces $\mathring{C}^{k,\gamma}(\RR^2)$ which encode angular regularity. We will define precisely the spaces $\mathring{C}^{k,\gamma}(\RR^2)$ in \eqref{def:angular-Holder}-\eqref{def:angular-k-Holder} below.   

\subsubsection{Radially homogeneous solutions}
For $x \in \mathbb{R}^2$, recall the associated polar coordinate variables $r = |x|$, $x_1 = r\cos\theta, x_2 = r\sin\theta$. The polar coordinate basis vectors are denoted $ \mathbf{e}_r = (\cos\theta, \sin\theta)$ and $\mathbf{e}_\theta = (-\sin\theta, \cos\theta)$. We also recall the polar coordinate expressions $\nabla = \mathbf{e}_r \partial_r + \mathbf{e}_\theta \frac{1}{r}\partial_\theta$ and $\nabla^\perp = \mathbf{e}_\theta \partial_r - \mathbf{e}_r \frac{1}{r}\partial_\theta$.

The theory in \cite{elgindi2020symmetries} includes the class of radially homogeneous solutions. These are solutions with the property that
\begin{equation}
\label{def:rad-hom-hom}
\omega(t,r,\theta) = \omega(t, 1, \theta), \quad u(t,r,\theta) = ru(t, 1, \theta), \quad \psi(t, r, \theta) = r^2\psi(t, 1, \theta)
\end{equation}
for all times $t$, where $\psi$ denotes the stream function of $u$, i.e. $u = \nabla^\perp \psi$. Note in particular that the vorticity is scale invariant, of the form $\omega(t,x) = h(t, x/|x|)$. Radial homogeneity is propagated by the evolution \eqref{eqn:hom-vort}-\eqref{eqn:hom-euler-bS-form}, and by uniqueness, the equations reduce to the following one-dimensional system on $\mathbb{S}^1$:
\begin{equation}
\label{eqn:1d-hom-sys}
    \begin{rcases}
        \partial_t h(t, \theta) + 2H(t, \theta)\partial_\theta h(t, \theta) = 0,\\
        4H + H'' = h.
    \end{rcases}
\end{equation}
As is well-known, scale invariant solutions are important in the study of the long time behavior of solutions to evolution equations. In \cite{elgindi2022long}, the long time behavior of solutions to \eqref{eqn:1d-hom-sys} possessing $m$-fold symmetry with $m \ge 4$ was described. This is significant because the setting is away from equilibrium.

\subsection{Notational conventions and functional setting}
Before presenting our main results, we describe our functional setting and establish some notational conventions.
Define the norm
\begin{align}
    \|f\|_{\mathring{C}^{0, \gamma}(\RR^2)} &:= \|f\|_{L^\infty(\mathbb{R}^2)} + [|\cdot|^\gamma f]_{C^\gamma(\mathbb{R}^2)} \label{def:angular-Holder}\\
    &:= \|f\|_{L^\infty(\mathbb{R}^2)}+ \sup_{x\ne y} \frac{||x|^\gamma f(x) - |y|^\gamma f(y)|}{|x-y|^\gamma} \nonumber
\end{align}
where $[\ \cdot\  ]_{C^\gamma(\mathbb{R}^2)}$ is the $\gamma-$H\"{o}lder seminorm for $\gamma \in (0,1)$. Note that $\mathring{C}^{0,\gamma}(\mathbb{R}^2)$ scales like $L^\infty(\mathbb{R}^2)$. For $\gamma = 1$, we define
\begin{equation}
\label{def:angular-Lip}
    \|f\|_{\mathring{C}^{0, 1}(\mathbb{R}^2)} := \|f\|_{L^\infty(\mathbb{R}^2)} + \||x|\nabla f\|_{L^\infty(\mathbb{R}^2)}
\end{equation}
We also define higher order spaces for $k \ge 1$ via the norms
\begin{equation}
\label{def:angular-k-Lip}
    \|f\|_{\mathring{C}^{k, 1}(\mathbb{R}^2)} := \|f\|_{L^\infty(\mathbb{R}^2)} + \||x|^{k}\nabla^k f\|_{L^\infty(\mathbb{R}^2)}
\end{equation}
\begin{equation}
\label{def:angular-k-Holder}
    \|f\|_{\mathring{C}^{k, \gamma}(\mathbb{R}^2)} := \|f\|_{\mathring{C}^{k-1, 1}(\mathbb{R}^2)} + [| \cdot |^{\gamma + k} \nabla^{k}f]_{C^\gamma(\mathbb{R}^2)},  \quad \gamma \in (0,1).
\end{equation}
Thus $\mathring{C}^{k,\gamma}(\mathbb{R}^2)$ is the subspace of $L^\infty(\mathbb{R}^2)$ functions with finite $\mathring{C}^{k,\gamma}(\mathbb{R}^2)$ norm. The usual H\"{o}lder spaces will be denoted by $C^{k,\gamma}(\mathbb{R}^2)$; on the circle we use $C^{k,\gamma}(\mathbb{S}^1)$. We will often omit the dependence of the spaces and norms on the domain $\RR^2$ and just write for example $\mathring{C}^{k,\gamma}$  where no confusion arises. We remark that the spaces $\mathring{C}^{k,\gamma}(\mathbb{R}^2)$ encode regularity in the angular variable $\theta$; in fact, $g(\theta) \in C^{k,\gamma}(\mathbb{S}^1)$ if and only if $g \in \mathring{C}^{k,\gamma}(\mathbb{R}^2)$ (see \Cref{app:a} for useful properties of $\mathring{C}^{k,\gamma}(\mathbb{R}^2)$). Additionally, $\mathring{C}^{k,\gamma}(\mathbb{R}^2)$ functions are ${C}^{k,\gamma}$ away from the origin.

For a function space $X$, we will use $X_m$ to denote the subspace of $m$-fold symmetric functions in $X$. In addition $C([0, T]; X)$ denotes the Banach space of $X$-valued continuous functions on $[0,T]$, where $X$ is a Banach space.  For a function on the circle, being $m$-fold symmetric is the same as being periodic with period $\frac{2\pi}{m}$. Thus we will often identify an $m$-fold symmetric function on $\mathbb{S}^1$ with its restriction to $[-\frac{\pi}{m}, \frac{\pi}{m}]$. 
    The letter $C$ will often be used to denote a constant whose value can change from instance to instance. We will also occasionally write $C = C(p_1, p_2,\dots, p_\ell)$ when $C$ depends on parameters $p_1, p_2,\dots, p_\ell$ and where inessential, some dependencies will be suppressed. We may also write $A \lesssim B$ to mean $A \le CB$ for some $C$, and $A \approx B$ to denote $A \lesssim B$ and $B \lesssim A$.
\subsection{Problem formulation and main results}
The first aim of the present article is to extend the well-posedness theory in $\mathring{C}^{k,\gamma}(\RR^2)$ to the inhomogeneous Euler equations under $m$-fold symmetry, at least locally in time. As in the homogeneous setting above, this theory will cover a special class of radially homogeneous solutions for the inhomogeneous Euler equations, for which the dynamics reduce to a new one-dimensional system \eqref{eqn:1dsys} on $\mathbb{S}^1$. Our second aim is to study this one-dimensional system; we will show that the corresponding Cauchy problem is globally well-posed in time for a certain class of initial data. While the literature contains continuation criteria for various local theories, we are unaware of any non-trivial \textit{large data} setting before our study for which global existence is shown for the inhomogeneous Euler equations. Hence our results also offer a new nonperturbative setting in which the long-time dynamics of the inhomogeneous Euler equations can be studied. We make some observations in that direction in \Cref{sec:long-time} below in a special case within this setting.

We first reformulate the inhomogeneous problem as follows. If we denote $\Omega := \nabla^\perp \cdot (\rho u)$, apply formally $\nabla^\perp\cdot$ to the second equation in \eqref{eqn:inhomogeneous-2deuler} and use $\nabla \cdot u = 0$, we obtain after a short calculation
\begin{equation}
\label{eqn:vorticityform}
    \partial_t \Omega + u\cdot \nabla \Omega = \frac{1}{2}\nabla|u|^2 \cdot \nabla^\perp \rho.
\end{equation}
Since $\nabla \cdot u = 0$, we may introduce the stream function $\psi$ of $u$, so that $u = \nabla^\perp \psi$ and hence
\begin{equation*}
    \Omega = \nabla^\perp\cdot(\rho\nabla^\perp \psi) = \nabla \cdot (\rho \nabla \psi).
\end{equation*}
Thus, if we can recover $u = \nabla^\perp \psi$ uniquely from $\Omega = \nabla \cdot (\rho \nabla \psi)$, then we can reformulate the Cauchy problem as follows: given (sufficiently regular) $u_0$ divergence-free and $\rho_0$ with $0 < c_0 \le \rho_0 \le c_1$, define $\Omega_0 = \nabla^\perp\cdot(\rho_0u_0)$ and seek $(\rho, \Omega)$ satisfying $\rho(0,x) = \rho_0(x)$ and $\Omega(0, x) = \Omega_0(x)$ and
\begin{equation}
    \label{eqn:inho-2deuler:vort}
    \begin{rcases}
        \partial_t \rho + u \cdot \nabla \rho  = 0 \\
        \partial_t \Omega + u\cdot \nabla \Omega = \frac{1}{2}\nabla|u|^2 \cdot \nabla^\perp \rho\\
    \end{rcases}
\end{equation}
for $t > 0$, where for each $t$, $u = \nabla^\perp \psi$, and $\psi$ is the unique solution of the elliptic equation
\begin{equation}
\label{eqn:elliptic-bslaw}
    \nabla\cdot (\rho \nabla \psi) = \Omega.
\end{equation}
Thereafter, the pressure $p$ can be recovered by solving the same kind of equation as \eqref{eqn:elliptic-bslaw}, namely
\begin{equation}
\label{eqn:pressure}
    -\nabla \cdot(\rho^{-1}\nabla p) = \nabla \cdot ((u \cdot\nabla) u),
\end{equation}
obtained formally by taking the divergence of the second equation in \eqref{eqn:inhomogeneous-2deuler} in the simplified form $\partial_t u + u\cdot\nabla u  = -\frac{1}{\rho}\nabla p$.

\begin{remark}
    One could have obtained a formulation of the inhomogeneous Euler equations in terms of the actual vorticity $\omega = \nabla^\perp\cdot u$ which satisfies
    \begin{equation*}
        \partial_t \omega + u\cdot \nabla \omega = - \frac{\nabla^\perp \rho}{\rho} \cdot \nabla p
    \end{equation*}
    Thus to obtain estimates on $\omega$, one must estimate the pressure $p$ and this still amounts to studying \eqref{eqn:pressure}. In this paper, we have elected to study the system \eqref{eqn:inho-2deuler:vort} on $\Omega$ and $\rho$ above. The unknown $\Omega$ has been utilized previously by Fanelli \cite{fanelli2025geometric} for establishing blow-up criteria. 
\end{remark}
Our first major task will be to show that in our functional setting, the equation \eqref{eqn:elliptic-bslaw} defines $\psi$ (and hence $u$) uniquely. We remark that this is already non-trivial even when $\rho$ is constant (i.e. the Poisson problem), since the source term is not required to decay at infinity. In \cite{elgindi2020symmetries}, the following theorem was shown.
\begin{theorem}
[{\cite[Theorem~B]{elgindi2020symmetries}}]
\label{thm:poisson}
    Let $\gamma \in (0,1)$. Assume that $f \in \mathring{C}_m^{0,\gamma}(\mathbb{R}^2)$ with $m \ge 3$. Then there exists a unique solution of the equation
    \begin{equation*}
        \Delta \psi = f
    \end{equation*}
    satisfying $|\psi(x)| \le C\|f\|_{L^\infty}|x|^2$,  $|\nabla \psi(x)| \le C\|f\|_{L^\infty}|x|$ and $m$-fold symmetric. Furthermore, $\nabla^2  \psi \in \mathring{C}^{0,\gamma}(\mathbb{R}^2)$ and 
    \begin{equation}
    \label{ineq:poisson:circ-schaud}
        \|\nabla^2\psi\|_{\mathring{C}^{0,\gamma}} \le C_\gamma \|f\|_{\mathring{C}^{0,\gamma}}
    \end{equation}
\end{theorem}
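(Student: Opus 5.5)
We will construct $\psi$ through the $m$-fold symmetrized Biot--Savart kernel, prove uniqueness by a Liouville argument, and obtain the weighted Hölder bound \eqref{ineq:poisson:circ-schaud} by localizing to dyadic annuli and rescaling.

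\textbf{Existence.} We define $\psi$ as a renormalized Newtonian potential. Write $\Gamma(x)=\frac{1}{2\pi}\log|x|$ and let $\Gamma^{(m)}(x,y)=\frac1m\sum_{j=1}^m\Gamma(x-\mathcal{O}_m^jy)$. We Taylor expand $\Gamma^{(m)}(x,\cdot)$ in $x$ for $|y|\ge 2|x|$. The $m$-fold averaging kills the Fourier modes $e^{ik\theta}$ with $m\nmid k$. Hence the expansion in $|x|/|y|$ only retains the term $\log|y|$ and terms of order $(|x|/|y|)^m$ or higher. The constant term $\log|y|$ is independent of $x$, so we subtract it. We set
\[
\psi(x)=\int_{|y|\le 1}\Gamma(x-y)f(y)\,\mathrm{d}y+\int_{|y|>1}\Bigl(\Gamma^{(m)}(x,y)-\tfrac{1}{2\pi}\log|y|\Bigr)f(y)\,\mathrm{d}y .
\]
For $m\ge3$ the integrand is $O(|x|^m|y|^{-m})$ for large $|y|$, which is integrable in 2D. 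Differentiating under the integral gives $\Delta\psi=f$, using the symmetry of $f$ to replace $\Gamma^{(m)}$ by $\Gamma$ locally. Splitting the integral into $|y|<2|x|$ and $|y|\ge2|x|$ gives $|\nabla\psi|\lesssim\|f\|_{L^\infty}|x|$ and $|\psi|\lesssim\|f\|_{L^\infty}|x|^2$. On the far region we use the kernel decay $|\nabla_x(\cdot)|\lesssim |x|^{m-1}|y|^{-m}$ for $m\ge3$. On the near region we use the trivial bound $\int_{|y|<2|x|}|x-y|^{-1}\,\mathrm{d}y\lesssim|x|$. One can check that $\psi$ is $m$-fold symmetric, or simply symmetrize it.

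\textbf{Uniqueness.} Let $w$ be the difference of two solutions. Then $w$ is harmonic, $m$-fold symmetric, and satisfies $|w|\le C|x|^2$. By Liouville, $w$ is a harmonic polynomial of degree at most $2$, i.e. a combination of $1$, $\mathrm{Re}\,z$, $\mathrm{Im}\,z$, $\mathrm{Re}\,z^2$, $\mathrm{Im}\,z^2$. Invariance under rotation by $2\pi/m$ with $m\ge3$ forces the modes $e^{\pm i\theta}$ and $e^{\pm2i\theta}$ to vanish. So $w$ is constant and $\nabla w=0$, which gives uniqueness of $\psi$ up to an additive constant. The bounds also pin down that constant, since $|\psi|\le C|x|^2$ forces $\psi(0)=0$.

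\textbf{Weighted Schauder estimate.} Here we use scaling. For $\lambda>0$ set $\psi_\lambda(x)=\lambda^{-2}\psi(\lambda x)$ and $f_\lambda(x)=f(\lambda x)$. The $\mathring{C}^{0,\gamma}$ norm is dilation invariant. By uniqueness, $\psi_\lambda$ is exactly the solution associated with $f_\lambda$ from the first step. On the annulus $A=\{1/2<|x|<4\}$, the weighted norm is equivalent to the usual $C^\gamma$ norm. We then apply interior Schauder estimates on $A$ to get
\[
\|\nabla^2\psi_\lambda\|_{C^\gamma(\{1<|x|<2\})}\lesssim \|f_\lambda\|_{C^\gamma(A)}+\|\nabla\psi_\lambda-\ell\|_{L^\infty(A)}
\]
for a suitable affine correction $\ell$. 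The lower-order term requires care, because $\|\nabla\psi_\lambda\|_{L^\infty(A)}\lesssim\|f\|_{L^\infty}$ alone would not capture a possibly large affine part.

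I expect the main obstacle to be the $L^\infty$ bound $\|\nabla^2\psi\|_{L^\infty}\lesssim\|f\|_{\mathring{C}^{0,\gamma}}$, including near the origin and globally. The plan is to write $\nabla^2\psi$ as a principal-value Calderón--Zygmund integral with the symmetrized kernel $\nabla^2\Gamma^{(m)}$. This kernel decays like $|y|^{-2-m}|x|^m$ when $|y|\gg|x|$, so the far field contributes only $O(\|f\|_{L^\infty})$. On the near field, the scale-invariant Hölder condition gives cancellation at scale $|x|$: on $|x-y|<|x|/2$ we have $|f(x)-f(y)|\lesssim |x|^{-\gamma}|x-y|^\gamma\|f\|_{\mathring{C}^{0,\gamma}}$. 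The intermediate region $|x|/2\le|x-y|$, $|y|\lesssim|x|$ is bounded by $\|f\|_{L^\infty}$ times a bounded log-free integral, using the mean-zero property of $\nabla^2\Gamma$ on annuli.

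With $\|\nabla^2\psi\|_{L^\infty}$ in hand, the Schauder estimate on each rescaled annulus yields a uniform bound on $[\,|\cdot|^\gamma\nabla^2\psi]_{C^\gamma}$ for pairs of points with comparable modulus. For far-apart pairs, i.e. $|x-y|\gtrsim\max(|x|,|y|)$, the seminorm is controlled directly by $2\|\nabla^2\psi\|_{L^\infty}$. This concludes \eqref{ineq:poisson:circ-schaud}.
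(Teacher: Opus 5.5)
Your argument is correct in outline. For existence and uniqueness it follows the route the paper attributes to Elgindi--Jeong; the paper only cites this result and does not prove it. That route builds $\psi$ from the $m$-fold symmetrized kernel, whose expansion for $|y|\gg|x|$ keeps only modes divisible by $m$. It then kills the kernel of $\Delta$ by Liouville, since the modes $e^{\pm i\theta},e^{\pm 2i\theta}$ are excluded when $m\ge3$. Working with the potential $\Gamma^{(m)}$ instead of the velocity kernel $K^{(m)}$ is cosmetic.

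The genuine difference is in how you get \eqref{ineq:poisson:circ-schaud}. The cited proof obtains it from $\mathring{C}^{0,\gamma}$-boundedness of the singular integral operator with the symmetrized degree $-2$ kernel. You instead get the weighted seminorm from interior Schauder estimates on rescaled annuli, which is exactly Step~1 of the proof of \Cref{thm:elliptic-existence}. The kernel route is self-contained but tied to an explicit kernel. The scaling route needs only classical local theory plus the scale-invariant growth bound $|\psi|\lesssim\|f\|_{L^\infty}|x|^2$. That is why it carries over to $\nabla\cdot(\rho\nabla\psi)$, where the growth bound must instead be extracted by compactness from \Cref{prop:uniqness-elliptic}.

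There are three corrections to make.

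\emph{Normalization.} Your $\psi$ is not normalized: $\psi(0)=\frac{1}{2\pi}\int_{|y|\le1}\log|y|\,f(y)\,\mathrm{d}y$, which is generally nonzero. For $f\equiv1$ your formula gives $\psi=\frac{|x|^2-1}{4}$ on the unit disk, so the claimed bound $|\psi|\lesssim\|f\|_{L^\infty}|x|^2$ fails near the origin. Fix this by subtracting $\frac{1}{2\pi}\log|y|=\Gamma^{(m)}(0,y)$ over all of $\mathbb{R}^2$, not just on $|y|>1$. The resulting potential is scale covariant, so your near/far splitting gives both growth bounds and $\psi_\lambda$ is literally the potential of $f_\lambda$.

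\emph{Far-field decay.} For $|y|\ge2|x|$ one has $|\nabla_x^2\Gamma^{(m)}(x,y)|\lesssim|x|^{m-2}|y|^{-m}$, which you get by differentiating your own first-order bound. It is not $|x|^m|y|^{-m-2}$: that decay would be integrable for every $m\ge1$, whereas the correct one is integrable exactly when $m\ge3$. This is where the hypothesis on $m$ enters. Your conclusion that the far field contributes $O(\|f\|_{L^\infty})$ still holds.

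\emph{The ``main obstacle''.} The step you single out is not an obstacle. Once the growth bound holds, $\|\psi_\lambda\|_{L^\infty(A)}\lesssim\|f\|_{L^\infty}$ uniformly in $\lambda$. The interior estimate $\|\psi_\lambda\|_{C^{2,\gamma}(\{1<|x|<2\})}\lesssim\|f_\lambda\|_{C^{0,\gamma}(A)}+\|\psi_\lambda\|_{L^\infty(A)}$ then already bounds $\sup|\nabla^2\psi_\lambda|$ on the inner annulus, hence $\|\nabla^2\psi\|_{L^\infty}\lesssim\|f\|_{\mathring{C}^{0,\gamma}}$. There is no large affine part to correct for, so your Calder\'on--Zygmund computation, while correct, is redundant.
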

The condition $m \ge 3$ is sharp. The existence part of \Cref{thm:poisson} as shown in \cite{elgindi2020symmetries} relies on the explicit Biot-Savart kernel $K$ and formula \eqref{eqn:pv-bs-law-hom}. For uniqueness, the $m$-fold symmetry with $m > 2$ ensures that $f$ is ``orthogonal" to degree two harmonic polynomials $\{x_1x_2, x_1^2-x_2^2\}$ in the kernel of $\Delta$. The estimate \eqref{ineq:poisson:circ-schaud} is a consequence of the $\mathring{C}^{0,\gamma}$-boundedness of singular integral operators given by a $-2$ degree homogeneous kernel which is smooth on $\mathbb{R}^2\setminus \{0\}$, has zero mean on circles, and whose $m$-fold symmetrization has integrable decay when $m \ge 3$. 

For our inhomogeneous setting, an explicit kernel is unavailable, and hence the method in \cite{elgindi2020symmetries} does not apply. We will instead prove uniqueness by appealing to the theory of quasi-conformal mappings of the complex plane available to us in the two dimensional setting. The main fact we exploit is that given an $m$-fold symmetric $\rho$, a solution $\psi$ of $\nabla \cdot(\rho \nabla \psi) = 0$ with $|\psi(x)| \le C|x|^2$ is topologically equivalent (see \Cref{thm:stoilow} in \Cref{sec:elliptic}) to a harmonic function $\varphi$, $m$-fold symmetric with $|\varphi(x)| \le C|x|^{2\alpha(\rho)}$, where  $\alpha(\rho)$ measures the contrast of $\rho$ and is defined by 
\begin{equation*}
    \alpha(\rho) := \frac{\sup \rho}{ \inf \rho}.
\end{equation*}
By a Liouville theorem, $\varphi$ is an $m$-fold symmetric polynomial of degree not exceeding $2\alpha(\rho)$. Since $m  > 2\alpha(\rho)$, $\varphi$ must be a constant and thus $\varphi\equiv 0$ and consequently $\psi \equiv 0$. Note that $m  > 2\alpha(\rho)$ specialized to the homogeneous case is exactly the optimal $m >2$. For existence, we shall develop Schauder \textit{a priori} estimates in the $\mathring{C}^{0,\gamma}$ framework. Then we will use \Cref{thm:poisson}, the uniqueness statement, and a compactness--contradiction approach in the spirit of \cite{simon1997schauder} to deduce the full \textit{a priori} estimates required to invoke the method of continuity. Thus, we shall generalize the result of \eqref{thm:poisson} to the equation \eqref{eqn:elliptic-bslaw} in the following theorem.
\begin{theorem}
\label{thm:elliptic-existence}
     Let $\gamma \in (0,1)$. Suppose that $0 < \inf \rho\le \rho \in \mathring{C}_m^{1,\gamma}(\mathbb{R}^2)$ with $\|\rho\|_{\mathring{C}^{1,\gamma}} \le M$ and $f \in \mathring{C}_m^{0,\gamma}(\mathbb{R}^2)$ with $m > 2 \alpha(\rho)$ and $M > 0$.  Then there exists a unique solution of the equation
    \begin{equation*}
        \nabla \cdot(\rho\nabla \psi) = f
    \end{equation*}
    satisfying $|\psi(x)|\le C\|f\|_{\mathring{C}^{0,\gamma}}|x|^2$, $|\nabla\psi(x)|\le C\|f\|_{\mathring{C}^{0,\gamma}}|x|$ and $m$-fold symmetric. Furthermore,
    $\nabla^2  \psi \in \mathring{C}^{0,\gamma}(\mathbb{R}^2)$ and 
    \begin{equation*}
        \|\nabla^2\psi\|_{\mathring{C}^{0,\gamma}} \le C \|f\|_{\mathring{C}^{0,\gamma}},
    \end{equation*}
    where $C = C (\gamma,m,\inf \rho, \sup\rho, M)$.
\end{theorem}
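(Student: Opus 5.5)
The plan follows the strategy announced just before the statement: uniqueness via quasiconformal mappings, then existence by the method of continuity driven by a priori Schauder estimates in $\mathring{C}^{0,\gamma}$. Those estimates are closed by a compactness--contradiction argument that feeds on uniqueness.

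\textbf{Uniqueness.} Let $\psi$ be $m$-fold symmetric with $\nabla\cdot(\rho\nabla\psi)=0$ and $|\psi(x)|\le C|x|^2$. Write $\rho\nabla\psi=\nabla^\perp\tilde\psi$, which is possible since $\nabla\cdot(\rho\nabla\psi)=0$. Then $F=\psi+i\tilde\psi$ solves a Beltrami equation $\bar\partial F=\mu\,\overline{\partial F}$ with $|\mu|\le k=\frac{\alpha(\rho)-1}{\alpha(\rho)+1}$.

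By the Stoilow factorization (\Cref{thm:stoilow}) we get $F=\Phi\circ\chi$, with $\Phi$ entire and $\chi$ a $K$-quasiconformal homeomorphism of $\mathbb{C}$ fixing $0$, where $K=\alpha(\rho)$. Averaging, or using uniqueness of the normalized principal solution, shows that $\chi$ can be chosen to commute with $\mathcal{O}_m$.

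Mori-type distortion gives $|\chi^{-1}(z)|\lesssim |z|^{K}$ for large $|z|$. Interior gradient bounds for divergence-form equations give $|\tilde\psi|\lesssim|x|^2$ as well. Hence $|\Phi(z)|\lesssim |z|^{2\alpha(\rho)}$, and $\Phi$ is a polynomial of degree at most $2\alpha(\rho)<m$. Symmetry of $F$ forces $\Phi(e^{2\pi i/m}z)=e^{2\pi i /m}$-twisted or invariant relations. I expect the cleanest form is to work with $\varphi=\mathrm{Re}\,\Phi$ directly, which is $m$-fold symmetric and harmonic. Its only monomials would then have degree divisible by $m$, so it is constant, and $\psi\equiv 0$ after normalizing $\psi(0)=0$.

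\textbf{A priori estimate.} The goal is to show, for $m$-fold symmetric solutions with the stated growth,
\begin{equation*}
\|\nabla^2\psi\|_{\mathring{C}^{0,\gamma}}\le C\|f\|_{\mathring{C}^{0,\gamma}}.
\end{equation*}
First, I would prove the weaker bound
\begin{equation*}
\|\nabla^2\psi\|_{\mathring{C}^{0,\gamma}}\le C(\|f\|_{\mathring{C}^{0,\gamma}}+\||x|^{-2}\psi\|_{L^\infty}).
\end{equation*}
For this I rewrite the equation as
\begin{equation*}
\Delta\psi=\rho^{-1}(f-\nabla\rho\cdot\nabla\psi)
\end{equation*}
and apply \Cref{thm:poisson}. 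On dyadic annuli, rescaled to unit size, I use classical Schauder estimates; the scale invariance of $\mathring{C}^{1,\gamma}$ for $\rho$ makes the constants uniform. A freezing/interpolation argument then absorbs the lower-order $|x|\,|\nabla\rho|\,|\nabla\psi|/|x|$ term.

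Second, I would remove the $\psi$ term by contradiction in the spirit of Simon. Suppose there are sequences $\rho_n,f_n,\psi_n$ with $\||x|^{-2}\psi_n\|_{L^\infty}=1$ and $\|f_n\|\to0$. By Arzel\`a--Ascoli, after subsequences, $\rho_n\to\rho$ locally in $C^1$ away from $0$, with $\inf$ and $\sup$ preserved, so still $m>2\alpha(\rho)$, and $\psi_n\to\psi$ in $C^2_{loc}$. The limit $\psi$ solves the homogeneous equation, hence $\psi=0$ by uniqueness.

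The difficulty is that the sup defining the normalization may be attained near $0$ or near $\infty$. To handle this I rescale: replace $\psi_n$ by $\lambda_n^{-2}\psi_n(\lambda_n x)$, with $\lambda_n$ chosen so the near-sup is attained on the unit annulus. This operation preserves all scale-invariant norms and the symmetry, and the limit is then a nonzero solution, which is a contradiction.

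\textbf{Existence.} Apply the method of continuity along $\rho_t=(1-t)+t\rho$. The contrast $\alpha(\rho_t)\le\alpha(\rho)$, so the condition $m>2\alpha(\rho_t)$ holds throughout, and the $\mathring{C}^{1,\gamma}$ bound is uniform in $t$. The map $\psi\mapsto\nabla\cdot(\rho_t\nabla\psi)$ is considered on the Banach space of $m$-fold symmetric $\psi$ with $\psi(0)=0$, $\nabla\psi(0)=0$ and $\nabla^2\psi\in\mathring{C}^{0,\gamma}$; this space controls $|\psi|\lesssim|x|^2$. At $t=0$ the map is invertible by \Cref{thm:poisson}, and the uniform estimate propagates invertibility to $t=1$.

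\textbf{Main obstacle.} I expect the hardest step to be the compactness/rescaling argument. One must make sure the limit retains nontrivial growth, so that $\||x|^{-2}\psi\|_\infty$ does not escape to $0$ or $\infty$, and that the limiting density retains the contrast bound. A secondary subtlety is the exact growth exponent in the quasiconformal distortion. The strict inequality $m>2\alpha$ leaves room for the Mori constant.
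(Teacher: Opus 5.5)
Your proposal is correct and follows essentially the paper's route: uniqueness via the Beltrami equation, St\"oilow factorization, equivariance of the normalized principal solution and the distortion bound $|\chi^{-1}(w)|\lesssim|w|^{\alpha}$; then the method of continuity along $1-s+s\rho$, closed by the rescaled annular Schauder estimate and the rescaling compactness--contradiction argument that uses uniqueness. The small deviations are harmless: with the paper's convention for $\nabla^\perp$ you should take $\psi-i\tilde\psi$ to get an orientation-preserving map, and ``averaging'' is not meaningful for homeomorphisms, but uniqueness of the normalized solution already gives $\chi(e^{2\pi i/m}z)=\lambda\chi(z)$ with $\lambda$ a primitive $m$-th root of unity, which suffices; also, bounding all of $\Phi$ via $|\tilde\psi|\lesssim|x|^2$ (rather than only $\mathrm{Re}\,\Phi$) and doing Step 1 through \Cref{thm:poisson} plus interpolation (rather than variable-coefficient Schauder on annuli) are valid alternatives to the paper's choices.
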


With \Cref{thm:elliptic-existence} at our disposal, we shall prove the following local well-posedness theorem. 
\begin{theorem}
\label{thm:lwp}
    Let $\Omega_0\in\mathring{C}_m^{1,\gamma}(\mathbb{R}^2)$ and let $0<\inf\rho_0\le\rho_0\in\mathring{C}_m^{2,\gamma}(\mathbb{R}^2)$, with $\gamma\in(0,1)$ and $m>2\alpha(\rho_0)$. Then there exists
    \[
    T=T\!\left(\gamma,m,\inf\rho_0,\sup\rho_0,\|\rho_0\|_{\mathring{C}^{2,\gamma}},\|\Omega_0\|_{\mathring{C}^{1,\gamma}}\right)>0
    \]
    and a unique solution of \eqref{eqn:inho-2deuler:vort} satisfying, for every $0<\beta<\gamma$,
    \[
    \Omega\in L^\infty(0,T;\mathring{C}_m^{1,\gamma})\cap C([0,T];\mathring{C}_m^{1,\beta}),
    \qquad
    \rho\in L^\infty(0,T;\mathring{C}_m^{2,\gamma})\cap C([0,T];\mathring{C}_m^{2,\beta}),
    \]
    and $(\Omega(0,x),\rho(0,x))=(\Omega_0(x),\rho_0(x))$.
\end{theorem}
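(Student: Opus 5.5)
We prove \Cref{thm:lwp} by a Picard-type iteration in Lagrangian form: a priori estimates in the high norms $\mathring{C}^{1,\gamma}\times\mathring{C}^{2,\gamma}$, contraction in a lower norm, and interpolation for continuity in time.

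\textbf{Velocity estimate.} For $\rho(t)$ with $\|\rho(t)\|_{\mathring{C}^{2,\gamma}}\le 2\|\rho_0\|_{\mathring{C}^{2,\gamma}}$, $\inf\rho(t)=\inf\rho_0$, $\sup\rho(t)=\sup\rho_0$, \Cref{thm:elliptic-existence} gives a unique $m$-fold symmetric $\psi$ with $\|\nabla^2\psi\|_{\mathring{C}^{0,\gamma}}\lesssim\|\Omega\|_{\mathring{C}^{0,\gamma}}$. Transport preserves the range of $\rho$, so the condition $m>2\alpha(\rho)$ persists. We then need one more derivative:
\[
\|\nabla^2\psi\|_{\mathring{C}^{1,\gamma}}\lesssim \|\Omega\|_{\mathring{C}^{1,\gamma}} .
\]
To get it, differentiate the equation in the scale-invariant directions $x\cdot\nabla$ and $\partial_\theta$, which preserve the class of $m$-fold symmetric fields. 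Writing $\Lambda=x\cdot\nabla$, we have
\[
\nabla\cdot(\rho\nabla\Lambda\psi)=\Lambda\Omega-\nabla\cdot((\Lambda\rho)\nabla\psi)+2\Omega ,
\]
up to commutator terms. The right-hand side lies in $\mathring{C}^{0,\gamma}$ because $|x|\nabla^2\rho\in\mathring{C}^{0,\gamma}$ and $\nabla\psi/|x|$, $\nabla^2\psi$ are bounded there. Applying uniqueness and the estimate of \Cref{thm:elliptic-existence} to $\Lambda\psi$ and $\partial_\theta\psi$, and using $\nabla^2\psi\in\mathring{C}^{0,\gamma}$, recovers $|x|\nabla^3\psi$ in $\mathring{C}^{0,\gamma}$. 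Consequently $u=\nabla^\perp\psi$ satisfies $\nabla u\in\mathring{C}^{1,\gamma}$ and $|u|\lesssim|x|$. The forcing $\tfrac12\nabla|u|^2\cdot\nabla^\perp\rho=(\nabla u)^Tu\cdot\nabla^\perp\rho$ is a product of $\nabla u\in\mathring{C}^{1,\gamma}$ with $|x|\nabla\rho\in\mathring{C}^{1,\gamma}$ (contracted with $u/|x|$), so it lies in $\mathring{C}^{1,\gamma}$ with norm $\lesssim\|\Omega\|^2\|\rho\|$. This uses the algebra property of $\mathring{C}^{k,\gamma}$ from \Cref{app:a}.

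\textbf{Transport estimates and scheme.} Since $\nabla u\in L^\infty$ and $u(0)=0$, the flow map $X$ is bi-Lipschitz, fixes the origin, and is $m$-fold equivariant. It also satisfies $|X(x)|\approx|x|$, so composition with $X$ is bounded on $\mathring{C}^{k,\gamma}$ with constants $e^{C\int\|\nabla u\|_{\mathring{C}^{1,\gamma}}}$. Standard transport estimates in $\mathring{C}^{k,\gamma}$ (the weights $|x|^k$ commute with the flow up to such factors) then give
\[
\frac{d}{dt}\bigl(\|\rho\|_{\mathring{C}^{2,\gamma}}+\|\Omega\|_{\mathring{C}^{1,\gamma}}\bigr)\le C(1+\|\rho\|+\|\Omega\|)^3 ,
\]
which closes on a short time $T$ depending on the listed quantities. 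For existence we iterate: given $(\rho^n,\Omega^n)$, compute $u^n$, then solve the linear transport equations for $\rho^{n+1}$ and $\Omega^{n+1}$ with $u^n$ and the forcing built from $(u^n,\rho^n)$. The bounds are uniform on $[0,T]$.

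\textbf{Contraction, limit and uniqueness.} We show the map is a contraction in the lower norm $\rho\in\mathring{C}^{1,\gamma}$, $\Omega\in\mathring{C}^{0,\gamma}$. The difference of velocities solves
\[
\nabla\cdot(\rho^n\nabla(\psi^n-\psi^{n-1}))=(\Omega^n-\Omega^{n-1})-\nabla\cdot((\rho^n-\rho^{n-1})\nabla\psi^{n-1}) .
\]
Its right-hand side is in $\mathring{C}^{0,\gamma}$ with norm controlled by the low-norm differences, so \Cref{thm:elliptic-existence} bounds $\nabla^2(\psi^n-\psi^{n-1})$ in $\mathring{C}^{0,\gamma}$. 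The differences of the transported quantities are controlled in the same way, losing one derivative, which is absorbed by the uniform high bounds. The limit belongs to $L^\infty(0,T;\text{high})$ by weak-* lower semicontinuity. Interpolation between the low and high norms gives $C([0,T];\mathring{C}^{1,\beta}\times\mathring{C}^{2,\beta})$ for $\beta<\gamma$. The same difference estimate yields uniqueness. Here uniqueness of the velocity within the symmetric, linearly growing class is exactly the uniqueness part of \Cref{thm:elliptic-existence}.

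\textbf{Main obstacle.} The hardest step is the higher elliptic estimate for $\nabla^3\psi$ in the weighted scale-invariant space. Commutators of $\Lambda$ and $\partial_\theta$ with the weights must stay in $\mathring{C}^{0,\gamma}$ without any decay at infinity. One must also check that $\Lambda\psi$ and $\partial_\theta\psi$ remain in the uniqueness class, namely $m$-fold symmetric with $|\cdot|\lesssim|x|^2$. If this route fails, I would instead redo the compactness--contradiction Schauder argument directly at order $\mathring{C}^{1,\gamma}$.
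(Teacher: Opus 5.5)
Your proposal is correct. The uniqueness argument (elliptic difference estimate for $\psi_1-\psi_2$, low-norm transport estimates, Gr\"onwall) is the paper's, but two steps take a different route.

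For the higher elliptic bound $\|\nabla^2\psi\|_{\mathring{C}^{1,\gamma}}\lesssim\|\Omega\|_{\mathring{C}^{1,\gamma}}$, the paper (\Cref{cor:schauder-higher-ord}) reruns the annular rescaling of Step~1 with classical higher-order interior Schauder estimates. This step is purely local, because the weighted bound on $\psi/|x|^2$ is already supplied by the $k=0$ theorem, so your fallback of redoing the compactness--contradiction argument is never needed. You instead commute with the dilation and rotation generators. The identities $\nabla\cdot(\rho\nabla\Lambda\psi)=\Lambda\Omega+2\Omega-\nabla\cdot((\Lambda\rho)\nabla\psi)$ and $\nabla\cdot(\rho\nabla\partial_\theta\psi)=\partial_\theta\Omega-\nabla\cdot((\partial_\theta\rho)\nabla\psi)$ are exact: differentiate the rescaled or rotated equation in the parameter. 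Since $\Lambda\psi$ and $\partial_\theta\psi$ are $W^{1,\infty}_{\mathrm{loc}}$, $m$-fold symmetric and $O(|x|^2)$, \Cref{prop:uniqness-elliptic} identifies them with the $X_m$ solutions furnished by \Cref{thm:elliptic-existence}. This uses only the $k=0$ theory and no higher-order classical Schauder input.

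For existence, the paper does not contract. It proves uniform endpoint and time-derivative bounds for the iterates and extracts a subsequence converging in $C([0,T];C^1_{\mathrm{loc}})$ and $C([0,T];C^2_{\mathrm{loc}})$ on $\mathbb{R}^2\setminus\{0\}$ by Arzel\`a--Ascoli. It then passes to the limit in the integrated equations and recovers the endpoint bounds pointwise. Your contraction in $\mathring{C}^{1,\gamma}\times\mathring{C}^{0,\gamma}$ reuses the uniqueness estimate verbatim. There is no genuine loss, since $(u^n-u^{n-1})\cdot\nabla\rho^n\in\mathring{C}^{1,\gamma}$ needs only $\nabla(u^n-u^{n-1})\in\mathring{C}^{0,\gamma}$ and $|x|\nabla\rho^n\in\mathring{C}^{1,\gamma}$. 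The whole sequence converges in the global scale-invariant norms, so you get $C([0,T];\mathring{C}^{2,\beta}\times\mathring{C}^{1,\beta})$ directly by interpolation. In the compactness route this continuity must be recovered afterwards from the limit equations, because convergence on compact subsets of $\mathbb{R}^2\setminus\{0\}$ does not control norms that are uniform over all scales. In exchange, the compactness route needs no estimates on differences of successive iterates.
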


\begin{remark}
    The regularity of $\Omega_0$ and $\rho_0$ displayed in \Cref{thm:lwp} above is likely not optimal, but we do not pursue this matter here.
\end{remark}

Assuming \Cref{thm:lwp}, we will deduce the following corollary (see \Cref{sec:rad-hom:inhom} below). 
\begin{corollary}
\label{cor:rad-hom-pres}
    If $\Omega_0$ and $\rho_0$ satisfy the hypotheses of \Cref{thm:lwp} and in addition are 0-homogeneous, i.e. $\Omega_0(\lambda x) = \Omega_0(x), \rho_0(\lambda x) = \rho_0(x)$ for all $\lambda$, then the corresponding solution $\Omega(t,x), \rho(t,x)$ is 0-homogeneous for all $t \in [0, T]$.
\end{corollary}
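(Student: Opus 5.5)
The plan is to use the scaling symmetry of \eqref{eqn:inho-2deuler:vort} together with the uniqueness part of \Cref{thm:lwp}. For $\lambda>0$, set
\[
\rho_\lambda(t,x) := \rho(t,\lambda x), \qquad \Omega_\lambda(t,x) := \Omega(t,\lambda x), \qquad \psi_\lambda(t,x) := \lambda^{-2}\psi(t,\lambda x), \qquad u_\lambda := \nabla^\perp\psi_\lambda .
\]
The first step is to check that $(\rho_\lambda,\Omega_\lambda)$ again solves \eqref{eqn:inho-2deuler:vort}. Here $u_\lambda(t,x) = \lambda^{-1}u(t,\lambda x)$ and $\nabla\cdot(\rho_\lambda\nabla\psi_\lambda)(x) = \Omega(t,\lambda x) = \Omega_\lambda(t,x)$.

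The transport terms are consistent because
\[
u_\lambda\cdot\nabla\rho_\lambda(x) = \lambda^{-1}u(\lambda x)\cdot\lambda(\nabla\rho)(\lambda x),
\]
and similarly for $\Omega_\lambda$. The forcing term is also invariant:
\[
\tfrac12\nabla|u_\lambda|^2 = \lambda^{-2}\cdot\lambda\,\big(\tfrac12\nabla|u|^2\big)(\lambda x), \qquad \nabla^\perp\rho_\lambda = \lambda(\nabla^\perp\rho)(\lambda x),
\]
so their product is exactly the original right-hand side evaluated at $\lambda x$. Thus time is not rescaled, which is why $0$-homogeneity in space should be propagated.

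The second step is to verify that $(\rho_\lambda,\Omega_\lambda)$ lies in the uniqueness class of \Cref{thm:lwp}. The key point is that the norms $\mathring{C}^{k,\gamma}$ are invariant under dilations $f\mapsto f(\lambda\cdot)$: for instance $|x|^k\nabla^k[f(\lambda x)] = |\lambda x|^k(\nabla^k f)(\lambda x)$, and the Hölder seminorm of $|\cdot|^{\gamma+k}\nabla^k f$ scales as $\lambda^{\gamma}\lambda^{-\gamma}=1$. Dilations also commute with the rotation $\mathcal{O}_m$, so $m$-fold symmetry is preserved. Hence
\[
\Omega_\lambda\in L^\infty(0,T;\mathring{C}_m^{1,\gamma})\cap C([0,T];\mathring{C}_m^{1,\beta}),
\]
with the analogous statement for $\rho_\lambda$. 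Since $\sup\rho$ and $\inf\rho$ are unchanged, $\alpha(\rho_\lambda)=\alpha(\rho)$ and the condition $m>2\alpha$ still holds.

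The third step is to confirm that $\psi_\lambda$ is the stream function singled out by \Cref{thm:elliptic-existence}: it is $m$-fold symmetric, and the bounds $|\psi_\lambda|\le C|x|^2$ and $|\nabla\psi_\lambda|\le C|x|$ are scale invariant with the same constant. By uniqueness in \Cref{thm:elliptic-existence}, $u_\lambda$ is the velocity prescribed by the formulation, so $(\rho_\lambda,\Omega_\lambda)$ is a solution in the sense of \Cref{thm:lwp}.

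Finally, $0$-homogeneity of the data gives $\rho_\lambda(0)=\rho_0$ and $\Omega_\lambda(0)=\Omega_0$. The existence time $T$ depends only on dilation-invariant quantities, so the same $T$ applies, and uniqueness in \Cref{thm:lwp} yields $(\rho_\lambda,\Omega_\lambda)=(\rho,\Omega)$ on $[0,T]$ for every $\lambda>0$. The statement's "for all $\lambda$" should be read as $\lambda>0$; $\lambda<0$ corresponds to the rotation $-I$, which is not needed.

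I expect the main obstacle to be the bookkeeping in the third step: making sure the scaled velocity is exactly the one selected by the elliptic uniqueness, rather than differing by a homogeneous solution of $\nabla\cdot(\rho\nabla\psi)=0$. This is handled by the scale invariance of the growth class together with \Cref{thm:elliptic-existence}.
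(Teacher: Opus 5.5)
Your argument is correct, but it runs in the opposite direction from the paper's.

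\textbf{The paper's route.} In the paragraph after \eqref{eqn:1dsys} in \Cref{sec:rad-hom:inhom}, the paper starts from a solution $(g,P,G)$ of the one-dimensional system. It notes that the ansatz \eqref{eqn:ansatz} turns this into a solution of \eqref{eqn:inho-2deuler:vort} in the class of \Cref{thm:lwp}, with $\psi=r^2G$ in the growth class of \Cref{thm:elliptic-existence}. Uniqueness then forces the solution of \Cref{thm:lwp} to be radially homogeneous. This presupposes a solution of \eqref{eqn:1dsys} with the given data on all of $[0,T]$. Within the paper, such a solution is only provided by \Cref{prop:1dsys:lwp}, which needs \eqref{H1}--\eqref{H2} and $m>4\sqrt{\alpha}$, and which comes with its own existence time. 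For example, $\alpha=2$, $m=5$ satisfies $m>2\alpha$ but not $m>4\sqrt\alpha$.

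\textbf{What your route buys.} Your dilation argument uses only three ingredients: uniqueness in \Cref{thm:lwp}; exact invariance of the system and of the $\mathring{C}^{k,\gamma}_m$ norms under $x\mapsto\lambda x$; and \Cref{prop:uniqness-elliptic} to identify $\psi_\lambda$ as the selected stream function. It therefore proves the corollary in the stated generality and on the full interval $[0,T]$.

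\textbf{Recovering the 1D reduction.} The paper's route directly identifies the dynamics with \eqref{eqn:1dsys}. You get this from one more use of your third step. Once $\Omega(t)$ and $\rho(t)$ are $0$-homogeneous, $\psi_\lambda(t)$ and $\psi(t)$ solve the same elliptic problem in the same class, so $\psi(t)$ is $2$-homogeneous and $\psi=r^2G$. The computation in \Cref{sec:rad-hom:inhom} then yields \eqref{eqn:1dsys}. As a by-product, this gives local solvability of \eqref{eqn:1dsys} whenever $m>2\alpha$, without \eqref{H1}--\eqref{H2}.

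\textbf{Two small points.} Your remark that $T$ depends only on dilation-invariant quantities is unnecessary, since $(\rho_\lambda,\Omega_\lambda)$ is automatically defined on the same $[0,T]$ as $(\rho,\Omega)$. Also, nothing in your computation uses $\lambda>0$, so the argument works verbatim for every $\lambda\neq0$.
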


The foregoing corollary allows us to reduce \eqref{eqn:inhomogeneous-2deuler} to a one-dimensional system \eqref{eqn:1dsys} which we then use to produce global in time solutions for \eqref{eqn:inhomogeneous-2deuler}. Unlike other global-well-posedness results in the literature, we do not assume that the data is small. We state this result informally, with the precise details to follow momentarily (\Cref{thm:gwp}). 
\begin{theorem}[Global existence, informal statement]
    There is a class of 0-homogeneous data so that the maximal existence time of \Cref{thm:lwp} is $T_* = +\infty$. 
\end{theorem}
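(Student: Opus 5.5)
We will derive the one-dimensional system for 0-homogeneous data, identify a class of initial data for which its solutions satisfy a priori bounds for all time, and then use a continuation criterion to show that $T_*=+\infty$.

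\emph{Reduction to the circle.} By \Cref{cor:rad-hom-pres}, for 0-homogeneous data we may write $\rho(t,x)=g(t,\theta)$ and $\Omega(t,x)=h(t,\theta)$. By uniqueness in \Cref{thm:elliptic-existence}, the stream function is then $\psi=r^2H(t,\theta)$. Substituting into $\nabla\cdot(\rho\nabla\psi)=\Omega$ gives the variable-coefficient ODE
\begin{equation*}
(gH')'+4gH=h \quad\text{on }\mathbb{S}^1 ,
\end{equation*}
with $H$ being $\frac{2\pi}{m}$-periodic. The velocity is $u=\nabla^\perp\psi=2rH\,\mathbf{e}_\theta-rH'\,\mathbf{e}_r$, so transport of a 0-homogeneous quantity gives $\partial_t g+2H\partial_\theta g=0$. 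Computing the forcing term $\tfrac12\nabla|u|^2\cdot\nabla^\perp\rho$, with $|u|^2=r^2(4H^2+H'^2)$, gives
\begin{equation*}
\partial_t h+2H\partial_\theta h=-(4H^2+H'^2)\,g' .
\end{equation*}

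\emph{Choice of data and key structure.} The natural class is data for which a monotone quantity controls $h'$ and $g'$. I would take $\rho_0$ and $\Omega_0$ depending on $\theta$ so that $H$ keeps a fixed sign; for instance, $h_0$ small relative to $g_0$ but with large contrast is \emph{not} the right choice. A more robust option is to exploit that $g$ is transported by the vector field $2H\partial_\theta$. Along characteristics $\dot\theta=2H$, both $g$ and the quantity $h+F$ (for a suitable $F$) are conserved or controlled. In particular, the ratio $g$ has conserved range, so $\alpha(\rho(t))=\alpha(\rho_0)$ for all $t$. The condition $m>2\alpha$ therefore persists, and the elliptic constants in \Cref{thm:elliptic-existence} are uniform in time provided $\|g\|_{C^{1,\gamma}}$ stays bounded.

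\emph{Bounds.} First, $\|H\|_{C^{2,\gamma}}\lesssim \|h\|_{C^{0,\gamma}}$ follows from invertibility of the ODE operator, which is uniform given uniform bounds on $g$. Second, $\|h\|_{L^\infty}$ satisfies a Gronwall-type bound, $\frac{d}{dt}\|h\|_\infty\le C\|H\|_{C^1}^2\|g'\|_\infty\lesssim\|h\|_\infty^2\|g'\|_\infty$, which would only give local control. The main obstacle is to close global bounds on $g'$ and $h$. Differentiating the transport equation gives $\partial_t g'+2H\partial_\theta g'=-2H'g'$, so $g'$ grows like $\exp\int\|H'\|_\infty$.

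To close the argument I would restrict to data where $g_0$ is monotone between consecutive critical points that are also zeros of $H$ (stagnation points). I would then take $h_0$ with a sign structure that makes $H'$ favorable at those points, and show via a bootstrap that $h$ stays bounded and the characteristic flow converges to the stagnation points. In that configuration $\int_0^\infty\|H'g'\|\,dt$ is controlled through conserved quantities, namely the energy $\int g(4H^2+H'^2)$ and the transported range of $g$.

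Finally, having bounded $\|g\|_{C^{2,\gamma}}$ and $\|h\|_{C^{1,\gamma}}$ on every finite interval, I would iterate \Cref{thm:lwp}, whose existence time depends only on those norms. This gives $T_*=\infty$.
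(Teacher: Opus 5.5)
Your reduction to the circle is correct, but there is a genuine gap: the proposal never fixes a concrete class of data, and the bootstrap you sketch does not close.

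\textbf{The bootstrap is circular.} You let the constant in $\|H\|_{C^{2,\gamma}}\lesssim\|h\|_{C^{0,\gamma}}$ depend on $\|\rho\|_{C^{1,\gamma}}$. But you yourself note that $\partial_\theta\rho$ grows like $\exp\int_0^t\|H'\|_\infty$. Controlling $H'$ through a quantity that $H'$ itself amplifies is circular, and your inequality for $\|h\|_\infty$ is, as you say, only local.

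\textbf{The proposed rescue quantity is not conserved.} The velocity grows linearly, so there is no finite energy to conserve, and $\int\rho\,(4H^2+H'^2)\,d\theta$ is not an invariant. Take constant density $\rho\equiv1$ and $H=\sin4\theta+\sin8\theta$, which is odd and $4$-fold symmetric. One finds
$\frac{d}{dt}\int(4H^2+H'^2)\,d\theta=-32\int Hh'\,(\partial_\theta^2+4)^{-1}H\,d\theta+4\int H^2h'\,d\theta$, and a direct computation shows this is nonzero at $t=0$.

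\textbf{The stagnation-point claims are unsupported.} Nothing in your setup forces zeros of $H$ to remain zeros. The paper proves no convergence of the flow to stagnation points and does not need it.

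\textbf{The missing mechanism} is that of \Cref{thm:gwp-1dsys}. Below, the paper's $P,g,G$ correspond to your $g,h,H$.
\begin{enumerate}
\item Odd symmetry of $g_0$ and even symmetry of $P_0$ propagate by uniqueness and force $G(t,0)=G(t,\frac{\pi}{m})=0$. Characteristics are therefore trapped in $[0,\frac{\pi}{m}]$, which gives $\bigl|\int_0^t2G(s,\phi(s,0,\theta))\,ds\bigr|\le\frac{\pi}{m}$.
\item For $m>4\sqrt\alpha$, Sturm--Liouville comparison yields Green's function bounds depending only on $P_0(0)$ and $\alpha$, both of which are transported. Hence $\|G(t)\|_{C^1}\lesssim\|g(t)\|_{L^1}$ uniformly in time, with no dependence on derivatives of $P$. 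All higher norms are then controlled by $\int_0^t\|g\|_{L^1}$ (\Cref{prop:apriori-ests1}). This is exactly what breaks your circularity.
\item Monotonicity of $P_0$ gives the forcing a sign, so $g\ge-\|g_0\|_\infty$ on $[0,\frac{\pi}{m}]$. The $L^1$ bound then reduces to an upper bound on $\int_0^{\pi/m}g/P^2$. Its time derivative is at most the boundary term $[(\partial_\theta G)^2/P]_0^{\pi/m}$, which leads to the continuation criterion $\mathcal I_b^+(t)\to\infty$ (\Cref{prop:apriori-gL1}).
\item Hypothesis \eqref{H3} makes $P$ constant on the material arc $[\gamma(t),\frac{\pi}{m}]$, where $g$ is merely transported. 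Applying the trapped-characteristic bound to the Green's representation of $G(t,\gamma(t))$ then bounds $\mathcal I_b^+$ on finite intervals (\Cref{prop:global-b-control}).
\end{enumerate}
None of these steps has a counterpart in your sketch. The most important one you are missing is the $L^1$-based elliptic bound whose constant does not depend on $\|P\|_{C^1}$.
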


\subsubsection{Radially homogeneous solutions for inhomogeneous Euler}
\label{sec:rad-hom:inhom}
The local well-posedness statement of \Cref{thm:lwp} covers a distinguished class of radially homogeneous solutions. These solutions have $0$-homogeneous vorticity and density and hence satisfy for each $t \ge 0$,
\begin{equation}
\label{eqn:ansatz}
    \rho = P(t,\theta), \quad \Omega = g(t,\theta), \quad \psi = r^2G(t,\theta),
\end{equation}
for some functions $P, g$ and $G$, where $\psi$ is the stream function. Thus \eqref{def:rad-hom-hom} is the special case where $\rho$ is constant.

If a solution satisfies \eqref{eqn:ansatz}, then
\begin{equation*}
    u = \nabla^\perp (r^2 G) = r(2G \mathbf{e}_\theta - \partial_\theta G \mathbf{e}_r),
\end{equation*}
\begin{equation*}
    \frac{1}{2}\nabla|u|^2 \cdot \nabla^\perp \rho = -\partial_\theta P(4G^2 + (\partial_\theta G)^2)
\end{equation*}
The first equation in \eqref{eqn:inhomogeneous-2deuler} becomes
\begin{equation}
    \label{eqn:P}
            \partial_t P + 2G\partial_\theta P = 0
\end{equation}
and \eqref{eqn:vorticityform} becomes
\begin{equation}
\label{eqn:g}
        \partial_t g + 2G\partial_\theta g   = -(4G^2 + (\partial_\theta G)^2)\partial_\theta P.
\end{equation}
In addition, from \eqref{eqn:elliptic-bslaw} we have
\begin{align*}
    g =\Omega &= \nabla\cdot(P \nabla (r^2G)) \\
    &=  \nabla\cdot(P(2x G+ x^\perp \partial_\theta G)) \\
    &=  P (\partial_{\theta\theta} G + 4G) + \partial_\theta G\partial_\theta P\\
    &= \partial_\theta(P\partial_\theta G) + 4PG
\end{align*}
The equation
\begin{equation}
    \label{eqn:BSlaw}
    g = \partial_\theta(P\partial_\theta G) + 4PG
\end{equation}
allows us to recover $G$ from $g$; hence, it is the analog of the Biot-Savart law in the current setting. Putting together \eqref{eqn:P}, \eqref{eqn:g} and \eqref{eqn:BSlaw}, we obtain the following one dimensional system on $\mathbb{S}^1 = \{ \theta: -\pi \le \theta < \pi \}$
\begin{equation}
\label{eqn:1dsys}
    \begin{rcases}
        \partial_t g + 2G\partial_\theta g   = -(4G^2 + (\partial_\theta G)^2)\partial_\theta P \\
        \partial_t P + 2G\partial_\theta P = 0 \\
        \partial_\theta(P\partial_\theta G) + 4PG = g
    \end{rcases}
\end{equation}
for $t \ge 0$. 

Conversely, a solution $(g,P,G)$ of \eqref{eqn:1dsys} gives a solution of \eqref{eqn:inho-2deuler:vort} via \eqref{eqn:ansatz}. Indeed, given a solution with $g\in L^\infty(0,T;C_m^{1,\gamma})\cap C([0,T];C_m^{1,\beta})$ and $P\in L^\infty(0,T;C_m^{2,\gamma})\cap C([0,T];C_m^{2,\beta})$ for every $0<\beta<\gamma$, and with $m>2\alpha(P)$, we may set $\Omega_0(x)=g(0,\theta)$ and $\rho_0(x)=P(0,\theta)$ in \Cref{thm:lwp} to obtain a local solution of \eqref{eqn:inho-2deuler:vort}. By uniqueness, the obtained solution must be radially homogeneous. Thus radial homogeneity is propagated by the system \eqref{eqn:inho-2deuler:vort}, as stated in \Cref{cor:rad-hom-pres}. 

The system \eqref{eqn:1d-hom-sys} is the special case when $P$ is identically constant. The non-linear non-local term $-(4G^2 + (\partial_\theta G)^2)\partial_\theta P$ which is absent in the homogeneous case is the main source of difficulty in the analysis of \eqref{eqn:1dsys}. In fact, for a smooth solution of \eqref{eqn:1dsys}, one can derive formally that
\begin{equation*}
     \partial_t(\partial_\theta G) + 2G\partial_\theta(\partial_\theta G) = (\partial_\theta G)^2 + \mathrm{term},
\end{equation*}
where ``term'' is specified in \eqref{eqn:ricatti-structure} below.
However, our next theorem asserts that the Cauchy problem for \eqref{eqn:1dsys} is globally well-posed in time  under certain symmetry and structural assumptions on initial data $(g_0, P_0) \in C^{1,\gamma}_m(\mathbb{S}^1) \times C_m^{2,\gamma}(\mathbb{S}^1)$. Hence, for appropriate $m$, the maximal existence time given by \Cref{thm:lwp} for such radially homogeneous data is $+\infty$. The precise assumptions are as follows.

\begin{assumptions}[initial data class]
\label{assumps}
      Suppose $g_0: \mathbb{S}^1 \to \RR$, $P_0: \mathbb{S}^1 \to \RR_+$ with $P_0\in C^{2,\gamma}(\mathbb{S}^1), g_0 \in C^{1,\gamma}(\mathbb{S}^1)$ and $\inf_{\mathbb{S}^1} P_0 > 0$. Denote
    \begin{equation*}
    \alpha =\alpha(P_0) := \frac{\sup_{\mathbb{S}^1} P_0}{\inf_{\mathbb{S}^1} P_0}
    \end{equation*}
and assume the following. 
\begin{enumerate}[label=\normalfont H\arabic*]
    \item \label{H1}The data $g_0$ and $P_0$ are $m$-fold symmetric with $m > 4\sqrt{\alpha}$ and furthermore $g_0(\theta)$ has odd symmetry about $0$, while $P_0(\theta)$ has even symmetry about $0$. 

   \item \label{H2}Either  $P_0'(\theta) \le 0$ on $[0, \frac{\pi}{m}]$, or $P_0'(\theta) \ge 0$ on $[0, \frac{\pi}{m}]$.
    \item \label{H3} $\mathrm{supp}(P_0') \subset [-\frac{\pi}{m}+\ve, \frac{\pi}{m} -\ve]$ for some $\ve > 0$.

\end{enumerate}

\end{assumptions}

\begin{remark}
In the sequel, we shall always assume that the first alternative in the hypothesis \eqref{H2} holds. The ensuing proofs are the same with minor modifications in case $P_0'(\theta) \ge 0$ holds instead. 
\end{remark}

\begin{figure}[H]
    \centering
    \includegraphics[width=0.32\textwidth]{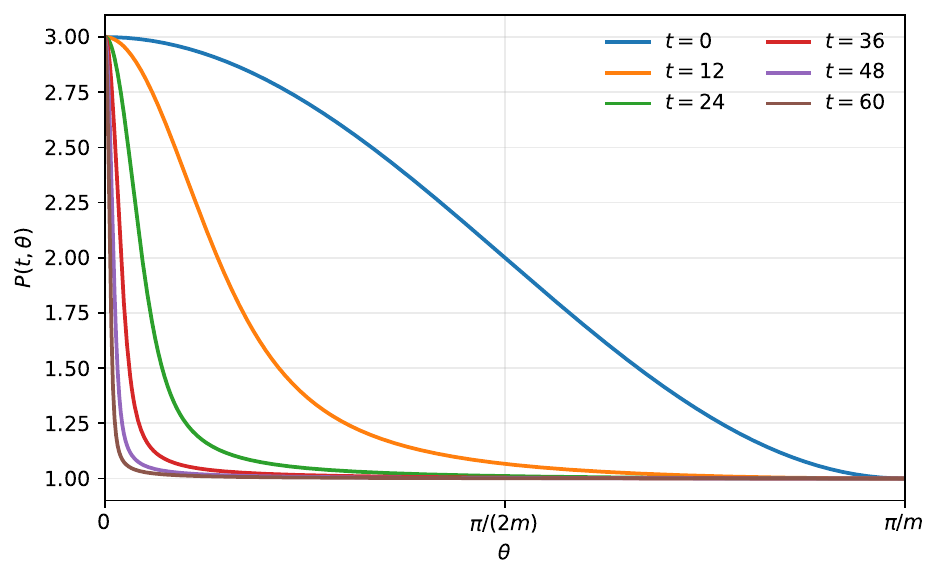}\hfill
    \includegraphics[width=0.32\textwidth]{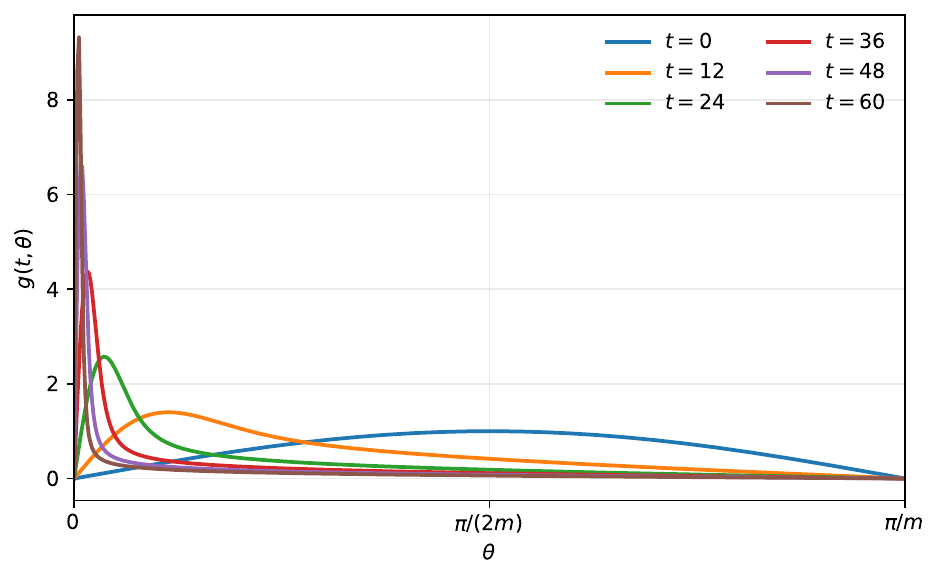}\hfill
    \includegraphics[width=0.32\textwidth]{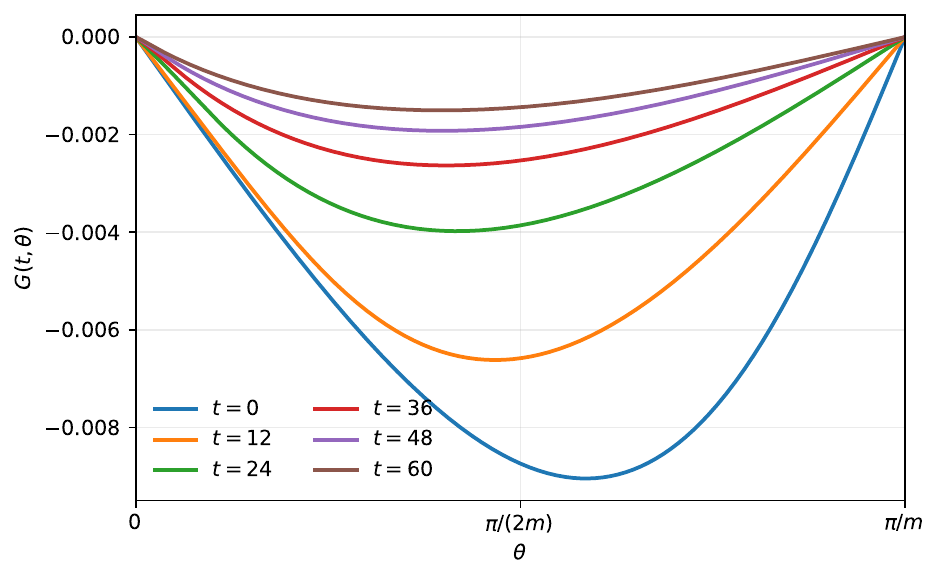}
    \caption{Numerical evolution of \eqref{eqn:1dsys} for $m=8$ with $P_0(\theta)=2+\cos(8\theta)$ and $g_0(\theta)=\sin(8\theta)$, which satisfy \eqref{H1}--\eqref{H2} but not \eqref{H3}. From left to right are $P$, $g$, and $G$. The computation shows concentration and growth in $g$ near $\theta=0$ while $P$ remains positive. We conjecture that global well-posedness holds without \eqref{H3}.}
    \label{fig:1d-nonH3-numerics}
\end{figure}

\begin{figure}[H]
    \centering
    \includegraphics[width=0.32\textwidth]{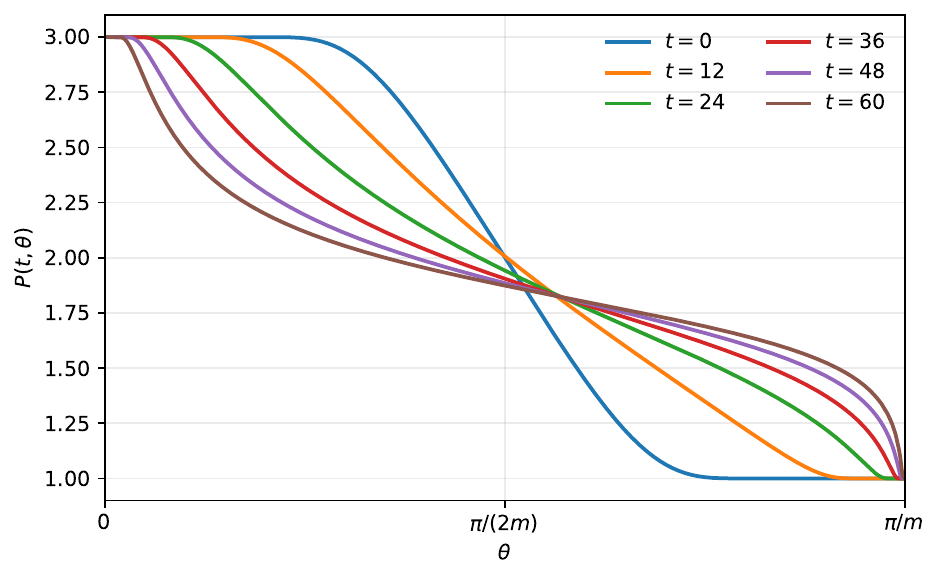}\hfill
    \includegraphics[width=0.32\textwidth]{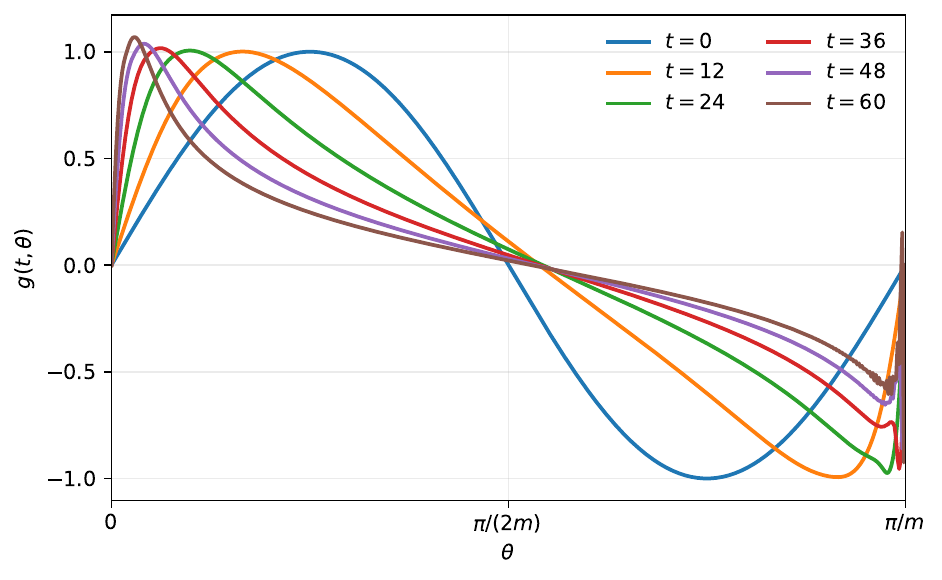}\hfill
    \includegraphics[width=0.32\textwidth]{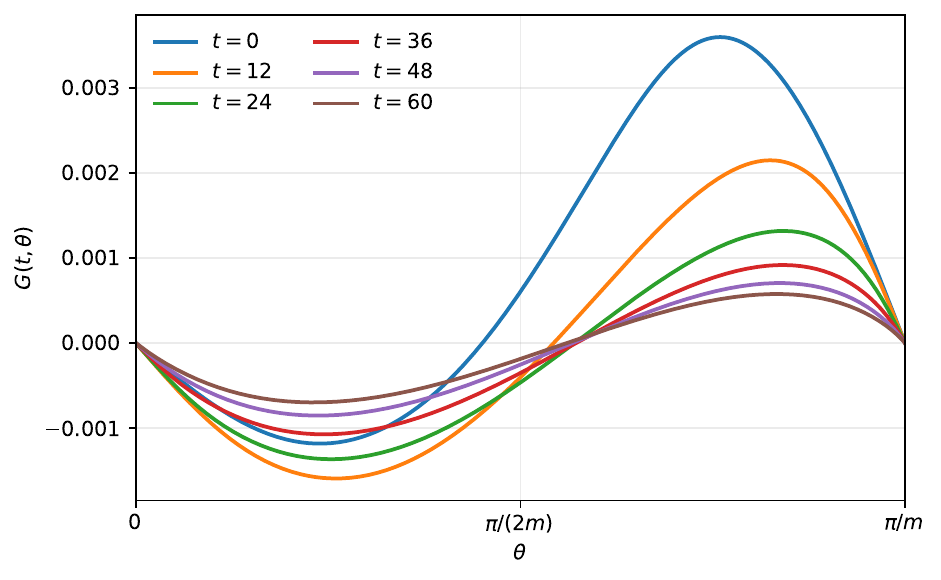}
    \caption{Numerical evolution of \eqref{eqn:1dsys} for admissible data satisfying \eqref{H1}--\eqref{H3}, with $P_0$ constant near the sector endpoints and sign-changing $g_0(\theta)=\sin(2m\theta)$. From left to right are $P$, $g$, and $G$. The density stays positive while $g$ becomes increasingly localized near the sector boundaries.}
    \label{fig:1d-H3-nonsigned-numerics}
\end{figure}
\begin{remark}
 Note that $\alpha(P(t,\cdot)) = \alpha(P_0)$ for smooth solutions of \eqref{eqn:1dsys}, since $P$ is transported. Due to the uniqueness part of \Cref{thm:lwp}, a smooth solution of \eqref{eqn:1dsys} attaining given data $(g_0, P_0)$ is unique. Now by uniqueness, it is easy to check that the assumption \eqref{H1} is propagated by \eqref{eqn:1dsys}. The same is true of \eqref{H2}: since $P$ is transported, $\partial_\theta P(t, \theta)$ preserves its sign ($\le 0$ on $[0, \frac{\pi}{m}]$) and thus $g$ is a super-solution of a homogeneous transport equation, so that the estimate 
 \begin{equation}
 \label{ineq:g:inf}
     g(t,\theta) \ge - \|g_0\|_{L^\infty}
 \end{equation}
on $[0, \frac{\pi}{m}]$
holds by the comparison principle.

\end{remark}

We hereafter refer to data $(g_0, P_0)$ satisfying \Cref{assumps} as \textit{admissible}. Our result is the following.

\begin{theorem}
\label{thm:gwp-1dsys}
    Let $(g_0,P_0)$ be admissible. Then the Cauchy problem for \eqref{eqn:1dsys} has a unique global solution such that, for every $T>0$ and every $0<\beta<\gamma$,
    \[
    g\in L^\infty(0,T;C_m^{1,\gamma}(\mathbb{S}^1))\cap C([0,T];C_m^{1,\beta}(\mathbb{S}^1)),
    \qquad
    P\in L^\infty(0,T;C_m^{2,\gamma}(\mathbb{S}^1))\cap C([0,T];C_m^{2,\beta}(\mathbb{S}^1)).
    \]
\end{theorem}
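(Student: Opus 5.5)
Local existence and uniqueness follow from \Cref{thm:lwp} via \Cref{cor:rad-hom-pres}: admissible data satisfy $m>4\sqrt{\alpha}\ge 2\alpha$ only when $\alpha\le 4$, so in general I would instead prove a direct one-dimensional local theory for \eqref{eqn:1dsys}. This is a standard transport/ODE argument, since the elliptic law \eqref{eqn:BSlaw} on $\mathbb{S}^1$ is a Sturm--Liouville problem. It is uniquely solvable on $m$-fold symmetric functions as long as $4$ is not an eigenvalue of $-\partial_\theta(P\partial_\theta\,\cdot)/P$ on period-$2\pi/m$ functions. The first nonzero such eigenvalue is bounded below by roughly $m^2/\alpha$, via comparison with the constant-coefficient case; so $m>4\sqrt{\alpha}$ gives $m^2/\alpha>16>4$ comfortably. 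This yields $\|G\|_{C^{3,\gamma}}\lesssim\|g\|_{C^{1,\gamma}}$ with constants depending only on $\inf P_0,\sup P_0,\|P\|_{C^{2,\gamma}}$.

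The continuation criterion will be the usual one: the solution persists as long as $\int_0^T\|\partial_\theta G\|_{L^\infty}\,dt<\infty$ and $\|g\|_{L^\infty}$ stays bounded. Indeed, $P$ is transported by $2G$, so its $C^{2,\gamma}$ norm is controlled by $\exp(C\int\|G\|_{C^{2,\gamma}})$. Then a Gronwall argument, with the elliptic estimate closing the loop (log-type losses are unnecessary since $G$ gains two derivatives), propagates $C^{1,\gamma}$ of $g$.

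The global a priori bound comes from the symmetries. By \eqref{H1}, propagated by uniqueness, $g$ and $G$ are odd and $P$ is even. Hence $G(0)=G(\pm\pi/m)=0$, and the points $0,\pm\pi/m$ are stagnation points. By \eqref{H3}, $P$ is constant near the sector endpoints initially. Since $G$ vanishes at these points, the flow map fixes them and the support of $\partial_\theta P$ stays inside $[-\pi/m+\ve(t),\pi/m-\ve(t)]$ for some $\ve(t)>0$. By \eqref{H2} the sign of $\partial_\theta P$ is preserved, giving the lower bound \eqref{ineq:g:inf} on $[0,\pi/m]$. For the upper bound I would use an energy-type identity. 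Multiplying \eqref{eqn:BSlaw} by $G$ and integrating gives a coercive quadratic form $\int P(G')^2-4PG^2$, again thanks to the spectral gap from $m>4\sqrt\alpha$. Conservation of the kinetic energy of the original flow, restricted to the homogeneous ansatz, should correspond to conservation of a quantity like $\int gG$ or $\int P(4G^2+G'^2)$. Combined with the one-sided bound on $g$ and oddness, I would try to bound $\|G\|_{W^{1,\infty}}$ uniformly. Then the forcing $-(4G^2+G'^2)\partial_\theta P$ is bounded in $L^\infty$ times $\|\partial_\theta P\|_{L^\infty}$, so $g$ grows at most as fast as $\|\partial_\theta P\|_{L^\infty}$ does.

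The main obstacle is closing this loop. $\|\partial_\theta P\|_{L^\infty}$ grows like $\exp\int\|\partial_\theta G\|_\infty$, while $\partial_\theta G$ obeys the Riccati-type equation \eqref{eqn:ricatti-structure}. I would exploit the sign structure: on $[0,\pi/m]$ the forcing has a definite sign, so $g$ is one-signed up to $\|g_0\|_{L^\infty}$. The Sturm--Liouville Green's function is positive there, by the maximum principle for the operator with spectral gap, which gives a one-sided bound on $G$ and hence on $\partial_\theta G$ at points where $\partial_\theta P\neq0$. The hope is that compression $\partial_\theta G<0$ (the dangerous direction for $\partial_\theta P$) is bounded by $C\|g_0\|_{L^\infty}$. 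Then $\|\partial_\theta P\|_\infty$ grows at most exponentially, $g$ grows at most double-exponentially, and the continuation criterion gives $T_*=\infty$.
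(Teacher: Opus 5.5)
Your local theory is fine, and your caveat is correct: $m>4\sqrt{\alpha}$ does not imply $m>2\alpha$ once $\alpha>4$, so a direct one-dimensional argument as in \Cref{prop:1dsys:lwp} is the right route. The gap is the global a priori bound, which is the whole content of the theorem, and both mechanisms you suggest fail.

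First, there is no conserved energy on the radially homogeneous class. The energy is infinite. With $p=r^2\Pi(\theta)$, the local energy balance gives $\frac{d}{dt}\int E\,d\theta=4\int\partial_\theta G\,(E+\Pi)\,d\theta$ for $E=\frac12P(4G^2+(\partial_\theta G)^2)$, so the angular profile changes by the radial flux. Already for $P\equiv1$ one computes $\frac{d}{dt}\int hH\,d\theta=-4\int(H')^3\,d\theta$, which is generically nonzero.

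Second, the sign structure does not bound compression. The kernel $H_m$ is negative, not positive, and $g\ge-\|g_0\|_\infty$ only yields the one-sided bound $G\le C\|g_0\|_\infty(\frac{\pi}{m}-\theta)$. For the derivative,
$\partial_\theta G(\theta)=\frac{1}{P(0)|c_m(0)|}\big(c_m'(\theta)\int_0^\theta gb\,d\zeta+b'(\theta)\int_\theta^{\pi/m}gc_m\,d\zeta\big)$
with $b'>0$ and $c_m<0$. So $\partial_\theta G(\theta)$ is as negative as $\int_\theta^{\pi/m}g_+|c_m|\,d\zeta$ is large, including at points of $\operatorname{supp}\partial_\theta P$. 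A lower bound on $\partial_\theta G$ is therefore equivalent to controlling $g_+$, which is exactly the unknown. The argument is circular here, and the double-exponential conclusion has nothing under it.

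What you are missing is the paper's mechanism, in three steps.

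\emph{$L^1$ control of $g$ suffices.} The kernel and its $\theta$-derivative are bounded (\Cref{cor:greens-Linf}, \Cref{prop:greens:bounds}), so $\|G\|_{C^1}\lesssim\|g\|_{L^1}$, and everything else follows (\Cref{prop:apriori-ests1}).

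\emph{Reduction to $\mathcal{I}_b^+$.} Testing the Biot--Savart law against $2\partial_\theta G/P^2$ and using $\partial_\theta P\le0$ gives $\frac{d}{dt}\int_0^{\pi/m}g/P^2\,d\theta\le\big[(\partial_\theta G)^2/P\big]_0^{\pi/m}$. Writing the endpoint derivatives through $\int gb$ and $\int gc_m$, together with $g_-\le\|g_0\|_\infty$, Gr\"onwall reduces the $L^1$ bound to bounding $\mathcal{I}_b^+(t)$ (\Cref{prop:apriori-gL1}).

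\emph{A time-integrated Lagrangian bound controls $\mathcal{I}_b^+$.} Since $G(t,0)=G(t,\frac{\pi}{m})=0$, trajectories are trapped in $[0,\frac{\pi}{m}]$, so $|\int_0^t2G(s,\phi(s,0,\theta))\,ds|\le\frac{\pi}{m}$. Take the characteristic $\gamma(t)$ issued from $\frac{\pi}{m}-\varepsilon$. The $g_+$-part of $G(s,\gamma(s))$ has a sign, so this bounds $\int_0^t|c_m(s,\gamma(s))|\,b(s,\frac{\pi}{m})^{-1}\int_0^{\gamma(s)}g_+b\,d\zeta\,ds$ up to constants. Your one-sided bound on $G$ then enters quantitatively: it gives $\frac{\pi}{m}-\gamma(t)\ge\varepsilon e^{-Ct}$, hence a lower bound on $|c_m(s,\gamma(s))|$. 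On $[\gamma(t),\frac{\pi}{m}]$, \eqref{H3} makes $g$ purely transported, hence bounded by $\|g_0\|_\infty$.

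Your qualitative remark that $\operatorname{supp}\partial_\theta P$ stays off the endpoints is the germ of this. Without the exponential rate and the trapping bound, however, it closes nothing.
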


In view of \Cref{thm:lwp} and \Cref{thm:gwp-1dsys}, we have the following.  
\begin{theorem}[Global existence, formal statement]
\label{thm:gwp}
     Let $\Omega_0\in\mathring{C}_m^{1,\gamma}(\mathbb{R}^2)$ and let $0<\inf\rho_0\le\rho_0\in\mathring{C}_m^{2,\gamma}(\mathbb{R}^2)$, with $\gamma\in(0,1)$ and $m>\max\{2\alpha(\rho_0),4\sqrt{\alpha(\rho_0)}\}$. If $\Omega_0=g_0(\theta)$ and $\rho_0=P_0(\theta)$ for some admissible pair $(g_0,P_0)$, then there exists a unique global solution of \eqref{eqn:inho-2deuler:vort} such that, for every $T>0$ and every $0<\beta<\gamma$,
     \[
     \Omega\in L^\infty(0,T;\mathring{C}_m^{1,\gamma})\cap C([0,T];\mathring{C}_m^{1,\beta}),
     \qquad
     \rho\in L^\infty(0,T;\mathring{C}_m^{2,\gamma})\cap C([0,T];\mathring{C}_m^{2,\beta}),
     \]
     with $(\Omega(0,x),\rho(0,x))=(\Omega_0(x),\rho_0(x))$.
\end{theorem}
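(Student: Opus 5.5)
The plan is to combine \Cref{thm:lwp}, \Cref{cor:rad-hom-pres} and \Cref{thm:gwp-1dsys} through a continuation argument. First, since $m>2\alpha(\rho_0)$, \Cref{thm:lwp} gives a local solution $(\Omega,\rho)$ on $[0,T_0]$. By \Cref{cor:rad-hom-pres} and uniqueness, this solution is $0$-homogeneous, $\Omega=g(t,\theta)$ and $\rho=P(t,\theta)$. By \Cref{thm:elliptic-existence}, the stream function is then $\psi=r^2G$: the function $r^2G$ built from the one-dimensional law \eqref{eqn:BSlaw} solves $\nabla\cdot(\rho\nabla\psi)=\Omega$ with quadratic growth and $m$-fold symmetry, so it must coincide with $\psi$. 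Hence $(g,P,G)$ solves \eqref{eqn:1dsys}. Since $m>4\sqrt{\alpha(\rho_0)}$ and $(g_0,P_0)$ is admissible, \Cref{thm:gwp-1dsys} supplies a global solution $(\tilde g,\tilde P)$ of \eqref{eqn:1dsys} in the stated classes. By the converse construction in \Cref{sec:rad-hom:inhom}, $(\tilde g(t,\theta),\tilde P(t,\theta))$ then defines a solution of \eqref{eqn:inho-2deuler:vort} on every $[0,T]$.

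The key step is checking that this lifted solution has the required regularity in $\mathring{C}^{1,\gamma}_m$ and $\mathring{C}^{2,\gamma}_m$. I would use the equivalence, noted after \eqref{def:angular-k-Holder} and in \Cref{app:a}, between $C^{k,\gamma}(\mathbb{S}^1)$ and $0$-homogeneous $\mathring{C}^{k,\gamma}(\mathbb{R}^2)$ functions, with comparable norms. This equivalence turns bounds in $L^\infty_tC^{k,\gamma}$ and continuity in $C^{k,\beta}$ on the circle into the corresponding statements on the plane. I also need $m>2\alpha(\rho(t))$ at all times. This follows because $P$ is transported, so $\alpha(P(t))=\alpha(P_0)$ and $\inf P$, $\sup P$ are conserved.

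Uniqueness on each $[0,T]$ comes from the uniqueness part of \Cref{thm:lwp}, applied in a standard continuation form. Let $T_1$ be the supremum of times up to which two solutions in the class agree. If $T_1<T$, restart \Cref{thm:lwp} at $T_1$ with data $(\Omega(T_1),\rho(T_1))\in\mathring{C}^{1,\gamma}_m\times\mathring{C}^{2,\gamma}_m$; this data lies in the required spaces by the $L^\infty$-in-time bounds and weak-$*$ lower semicontinuity. The restart yields agreement slightly beyond $T_1$, a contradiction. The same restart argument, together with the fact that the local existence time depends only on $\inf\rho$, $\sup\rho$ and the norms, shows that the \Cref{thm:lwp} solution coincides with the lifted global one. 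In particular its maximal time is $+\infty$.

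The main obstacle I expect is making the identification $\psi=r^2G$ rigorous. One must verify that $G\in C^{2,\gamma}$ solving \eqref{eqn:BSlaw} exists uniquely, which should be part of \Cref{thm:gwp-1dsys}. One must also check that $r^2G$ meets the growth conditions $|\psi|\lesssim|x|^2$ and $|\nabla\psi|\lesssim|x|$ of \Cref{thm:elliptic-existence}, which is immediate from homogeneity. With these in hand, the uniqueness in \Cref{thm:elliptic-existence} closes the argument.
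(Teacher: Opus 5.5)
Your proposal is correct and follows the paper's route. The paper obtains this theorem by lifting the global solution of \eqref{eqn:1dsys} from \Cref{thm:gwp-1dsys} through the ansatz \eqref{eqn:ansatz} and invoking the uniqueness part of \Cref{thm:lwp}, as in the converse discussion of \Cref{sec:rad-hom:inhom}; your extra checks (identifying $\psi=r^2G$ via elliptic uniqueness, transferring $C^{k,\gamma}(\mathbb{S}^1)$ control to $\mathring{C}^{k,\gamma}$, conservation of $\alpha$, and the continuation form of uniqueness) are exactly the details the paper leaves implicit, while your preliminary step of projecting the local \Cref{thm:lwp} solution onto the 1D system is harmless but unnecessary, since uniqueness alone identifies it with the lift.
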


\begin{remark}[Role of hypothesis \eqref{H3}]
    The hypothesis \eqref{H3} may not be necessary for global well-posedness, but the non-local non-linear term $-\partial_\theta P(4G^2 + (\partial_\theta G)^2)$ in \eqref{eqn:1dsys} poses difficulties in the rigorous analysis. Our proof below works because the odd symmetry of $G$ traps characteristics between $[0, \frac{\pi}{m}]$, and this observation can be used to verify the continuation criterion we shall develop in \Cref{sec:continuation-crit} provided that we have control near the boundaries, which is what \eqref{H3} ensures. 
\end{remark}

\subsection{Outline of the paper}
The rest of this paper is organized as follows. We begin in \Cref{sec:elliptic} with the existence and uniqueness theory for the elliptic equation \eqref{eqn:elliptic-bslaw} in the $\mathring{C}^{k,\gamma}(\RR^2)$ framework. The relevant definitions and theorems from the theory of quasi-conformal mappings are presented in a brief overview before proving \Cref{thm:elliptic-existence}. In \Cref{sec:lwp}, we outline some estimates for linear transport equations in $\mathring{C}^{k,\gamma}$ and use these estimates, the results of \Cref{sec:elliptic} and an iteration scheme to prove \Cref{thm:lwp}. \Cref{sec:rad-hom} is devoted to radially homogeneous solutions of the 2D inhomogeneous Euler equations. It begins in \Cref{sec:bvp} with the study of the boundary value problem (BVP) for $\partial_\theta(P\partial_\theta G) + 4PG = g$, i.e. the reduction of \eqref{eqn:elliptic-bslaw} under radial homogeneity. Green's functions for the BVP are constructed with attendant estimates in \Cref{sec:greensbounds}. This is followed by the local well-posedness theory for the one dimensional system \eqref{eqn:1dsys}. We also present a continuation criterion for the one dimensional system in \Cref{sec:continuation-crit}. We prove \Cref{thm:gwp-1dsys} in \Cref{sec:gwppf}. Lastly, we record in \Cref{sec:long-time} an observation about the long-time behavior of solutions in the special case where $g$ is signed.

\section{Existence and uniqueness for the elliptic problem}
\label{sec:elliptic}
We turn first to the task of recovering $\psi$ (and hence $u = \nabla^\perp \psi$) from $\Omega$ via the elliptic equation
\begin{equation*}
    \nabla\cdot (\rho \nabla \psi) = \Omega.
\end{equation*}
The equation $\nabla\cdot (\rho \nabla \psi) = \Omega$ should be viewed as the ``inhomogeneous Biot-Savart law" in the present context. Unlike the homogeneous case, one does not have an explicit kernel and furthermore the operator defined by $\mathcal{L}_\rho[\psi] := \nabla \cdot(\rho \nabla \psi)$ is evolving with $\rho$ in the broader context of \eqref{eqn:inho-2deuler:vort}. We address these problems in this section with the aid of tools from complex analysis and elliptic theory. 
\subsection{Uniqueness}
The goal here is to show that given an $m$-fold symmetric density $\rho$, the homogeneous problem 
\begin{equation}
\label{eqn:elliptic-bslaw-hom}
    \nabla\cdot (\rho \nabla \psi) = 0
\end{equation}
has no non-trivial solutions in the $m$-fold symmetric setting, for appropriate $m > 2\alpha(\rho)$. When $\rho$ is a constant (i.e. in the case of the Laplacian), this was shown in \cite{elgindi2020symmetries}. We shall assume in this section that $0 < \inf \rho < \sup\rho = 1$, without loss of generality. The following uniqueness statement takes advantage of the two dimensional setting of the problem by invoking the theory of quasi-conformal mappings of the complex plane. We begin with some relevant definitions and theorems, for which our chief reference is \cite{astala2008elliptic}. Denote $z = x_1 + ix_2$. We will identify a function $g$ on $\mathbb{R}^2$ with a function on $\mathbb{C}$ by writing $g(z)$ to mean $g(x_1, x_2)$ for $z = x_1 + i x_2$. For a complex function $f: \mathbb{C} \to \mathbb{C}$, we define the directional (distributional) derivatives
\begin{equation}
\label{def:complex-partials}
    \partial_\vartheta f(z) := \cos(\vartheta)\partial_{x_1}f(z) + \sin(\vartheta)\partial_{x_2} f(z), \quad \vartheta \in [0, 2\pi].
\end{equation}

\begin{definition}
    Let $1 \le K < \infty$ and let $\Omega, \Omega' \subset \mathbb{C}$. An orientation preserving homeomorphism $f: \Omega \to \Omega'$ is said to be $K$-quasiconformal if $f \in H^{1}_{\mathrm{loc}}(\Omega)$ and the directional derivatives satisfy
    \begin{equation}
    \label{ineq:distortion}
        \max_{\vartheta} |\partial_\vartheta f(z)| \le K\min_{\vartheta}|\partial_\vartheta f(z)|
    \end{equation}
    for a.e. $z \in \Omega$.
\end{definition}

\begin{definition}
    Let $1 \le K < \infty$ and let $\Omega, \Omega' \subset \mathbb{C}$. An orientation preserving map $f: \Omega \to \Omega'$ is said to be $K$-quasiregular if $f \in H^{1}_{\mathrm{loc}}(\Omega)$ and the directional derivatives satisfy
    \begin{equation*}
        \max_{\vartheta} |\partial_\vartheta f(z)| \le K\min_{\vartheta}|\partial_\vartheta f(z)|
    \end{equation*}
    for a.e. $z \in \Omega$.
\end{definition}

Thus quasiconformal mappings are homeomorphic quasiregular mappings. The condition \eqref{ineq:distortion} says that quasi-conformal mappings may distort the plane, but only boundedly so. If $K = 1$, then in fact $f$ is a conformal mapping. The connection to our problem is that one can relate quasiconformal mappings to solutions of  \eqref{eqn:elliptic-bslaw-hom} in the same manner that one can relate a harmonic function of the plane with a holomorphic function. Precisely, if $\nabla \cdot (\rho \nabla \psi) = 0$ for $\psi \in H^{1}_{\mathrm{loc}}$, then there exists a stream function $\varphi \in H^1_{\mathrm{loc}}$ such that
    \begin{equation*}
        \varphi_{x_1} = \rho\psi_{x_2}, \quad \varphi_{x_2} = -\rho\psi_{x_1}.
    \end{equation*}
    Define $f:\mathbb{C} \to \mathbb{C}$ by
    \begin{equation}
        \label{def:f}
        f(z) = \psi - i\varphi
    \end{equation}
Now we recall the Wirtinger derivatives
    \begin{equation}
    \label{def:wirtinger-der}
        \partial_{\bar{z}} f := \frac{1}{2}(\partial_{x_1} + i\partial_{x_2} ) f, \qquad \partial_{z} f := \frac{1}{2}(\partial_{x_1} - i\partial_{x_2} ) f
    \end{equation}
    By a direct calculation, we find
    \begin{equation*}
       \partial_{\bar{z}} f = \frac{1-\rho}{2}(\psi_{x_1} + i\psi_{x_2}) 
    \end{equation*}
    and 
        \begin{equation*}
       \partial_{z} f = \frac{1+\rho}{2}(\psi_{x_1} - i\psi_{x_2})
    \end{equation*}
    Hence $f$ satisfies the \textit{Beltrami equation}:
    \begin{equation}
    \label{eqn:beltrami}
        \partial_{\bar{z}} f = \mu_f(z)\partial_{z} f,
    \end{equation}
    where
    \begin{equation*}
        \mu_f(z) = \frac{1 - \rho(z)}{1+\rho(z)}e^{-2i\ \mathrm{arg}(\partial_z f)}
    \end{equation*}
Since $\sup \rho =1$, it follows that
    \begin{equation*}
        |\mu_f(z)| \le \frac{1-\inf \rho}{1+\inf \rho} = \frac{\alpha-1}{\alpha+1} =: k'  < 1. 
    \end{equation*}
By a simple computation using \eqref{def:complex-partials}, \eqref{eqn:beltrami} and \eqref{def:wirtinger-der}, we see that
\begin{equation}
\label{eqn:quasi-reg-est}
    \max_{\vartheta} |\partial_\vartheta f(z)| \le \frac{1+k'}{1-k'}\min_{\vartheta}|\partial_\vartheta f(z)| = \alpha\min_{\vartheta}|\partial_\vartheta f(z)|.
\end{equation}
Furthermore, \eqref{eqn:beltrami} implies that the Jacobian $J(z, f) = |\partial_z f|^2 - |\partial_{\bar{z}}f|^2 \ge 0$, so that $f$ is orientation preserving and thus $\alpha$-quasiregular.

We state below the measurable Riemann mapping theorem, which asserts the existence of a homeomorphic solution of \eqref{eqn:beltrami}. In the following, $\hat{\mathbb{C}}$ denotes the Riemann sphere. 
\begin{theorem}
[{\cite[Theorem~5.3.4]{astala2008elliptic}}]
\label{thm:riemann}
    Let $|\mu(z)| \le k < 1$ for a.e. $z \in \mathbb{C}$ and let $K = \frac{1+k}{1-k}$. There exists a unique solution $f: \hat{\mathbb{C}} \to \hat{\mathbb{C}}$ of the Beltrami equation
    \begin{equation*}
        \partial_{\bar{z}} f = \mu(z)\partial_z f \quad \text{for a.e. } z \in \mathbb{C}
    \end{equation*}
which is a $K$-quasiconformal homeomorphism satisfying
\begin{equation*}
    f(0) = 0, \quad f(1) = 1, \quad f(\infty) = \infty.
\end{equation*}
\end{theorem}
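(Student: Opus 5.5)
The statement is the measurable Riemann mapping theorem, quoted from \cite{astala2008elliptic}, and nothing later in the paper depends on how it is proved. I would follow the standard route: construct a principal solution by a Neumann series built on the Beurling transform, prove that it is a homeomorphism by smooth approximation, and then renormalize and prove uniqueness with Weyl's lemma.

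\emph{Step 1 (compactly supported $\mu$).} Assume first that $\mu$ is supported in a disc. Let $\mathcal{S}$ be the Beurling transform, the Fourier multiplier $\bar\xi/\xi$, which is an isometry of $L^2(\mathbb{C})$ and satisfies $\mathcal{S}(\partial_{\bar z}\phi)=\partial_z\phi$. Let $\mathcal{C}$ be the Cauchy transform, which satisfies $\partial_{\bar z}\mathcal{C}h=h$ and $\partial_z\mathcal{C}h=\mathcal{S}h$. By Riesz--Thorin interpolation, $\|\mathcal{S}\|_{L^p\to L^p}\to 1$ as $p\to 2$, so there is $p>2$ with $k\|\mathcal{S}\|_{L^p}<1$. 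I look for $f=z+\mathcal{C}h$. The Beltrami equation then becomes $h=\mu+\mu\mathcal{S}h$, which is solved in $L^p$ by the convergent Neumann series $h=\sum_{n\ge0}(\mu\mathcal{S})^n\mu$. Since $p>2$ and $h$ has compact support, $\mathcal{C}h$ is H\"older continuous and $f(z)=z+O(1/|z|)$ at infinity. So $f\in W^{1,p}_{\mathrm{loc}}$ solves the equation, and I call it the principal solution.

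\emph{Step 2 (homeomorphism).} This is the main obstacle. For $\mu\in C_c^\infty$ I would use Schauder estimates to show that $f$ is $C^{1,\beta}$. The Jacobian $J=|\partial_z f|^2(1-|\mu|^2)$ is nonnegative, and I must show that $\partial_z f$ never vanishes. For this, write $\partial_z f=e^{\sigma}$ with $\sigma$ solving $\partial_{\bar z}\sigma-\mu\partial_z\sigma=\partial_z\mu$, a linear equation solvable by the same Neumann-series method. Then $f$ is a local diffeomorphism, and since $f(z)\sim z$ it is proper. A proper local homeomorphism of $\hat{\mathbb{C}}$ is a covering map, hence a homeomorphism.

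For general $\mu$, take mollifications $\mu_n\to\mu$ a.e. with $|\mu_n|\le k$ and a common support. The principal solutions $f_n$ are $K$-quasiconformal, with uniform $W^{1,p}$ bounds from the series. They are also uniformly H\"older on compact sets, either from Mori-type distortion bounds or from the Sobolev embedding together with the normalization at $\infty$. The same bounds hold for the inverses $f_n^{-1}$, which solve a Beltrami equation with coefficient of modulus at most $k$. By Arzel\`a--Ascoli both families converge locally uniformly along a subsequence, the limits are mutually inverse, and the limit of the $f_n$ is a homeomorphism. It solves the equation with coefficient $\mu$, because $h_n\to h$ in $L^p$ by continuity of the Neumann series in $\mu$.

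\emph{Step 3 (general $\mu$).} Split $\mu=\mu\chi_{\mathbb{D}}+\mu\chi_{\mathbb{C}\setminus\mathbb{D}}$. First solve the problem for the exterior part, transported by the inversion $z\mapsto1/z$, which gives a compactly supported coefficient of the same modulus. Then compose with a solution for the coefficient that the first map pushes forward, which is again compactly supported and bounded by $k$; the chain rule for quasiconformal maps gives the composition formula for Beltrami coefficients. Finally, compose with a M\"obius (affine) map to achieve $f(0)=0$, $f(1)=1$, $f(\infty)=\infty$.

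\emph{Step 4 (uniqueness).} If $f$ and $g$ are two such solutions, then $g\circ f^{-1}$ is a homeomorphism of $\hat{\mathbb{C}}$ lying in $W^{1,2}_{\mathrm{loc}}$ with Beltrami coefficient $0$ a.e. This uses the fact that quasiconformal maps preserve null sets and satisfy the chain rule. By Weyl's lemma $g\circ f^{-1}$ is conformal, hence a M\"obius map. A M\"obius map fixing $0$, $1$ and $\infty$ is the identity, so $g=f$.
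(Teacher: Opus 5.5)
Your outline is correct and is essentially the standard proof of the measurable Riemann mapping theorem as given in \cite[Ch.~5]{astala2008elliptic}: a principal solution from the Beurling-transform Neumann series, the $\log\partial_z f$ device plus smooth approximation for the homeomorphism property, reduction of non-compactly supported $\mu$ by inversion and composition, and uniqueness via Weyl's lemma. The paper gives no proof of its own here and simply invokes \cite[Theorem~5.3.4]{astala2008elliptic}, whose argument follows this same route.
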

The next theorem we state is the St\"{o}ilow factorization theorem, which classifies all solutions of \eqref{eqn:beltrami} in terms of any homeomorphic one (a homeomorphic solution exists by \Cref{thm:riemann}). In short, solutions of \eqref{eqn:beltrami} are topologically equivalent to holomorphic functions. This equivalence will be key for us in extending the uniqueness statement of \cite{elgindi2020symmetries} for the Laplacian to \eqref{eqn:elliptic-bslaw}. 

\begin{theorem}
[{\cite[Theorem~5.5.1]{astala2008elliptic}}]
\label{thm:stoilow}
    Let $f: \mathbb{C} \to \mathbb{C}$ be a homeomorphic solution to the Beltrami equation
        \begin{equation}
        \label{eqn:beltrami-stoilow}
        \partial_{\bar{z}} f = \mu(z)\partial_z f, \quad f \in W^{1,1}_{\mathrm{loc}}(\mathbb{C}),
    \end{equation}
    with $|\mu(z)| \le k < 1$ for a.e. $z \in \mathbb{C}$. Assume that $g$ is any other solution of \eqref{eqn:beltrami-stoilow} on $\mathbb{C}$. Then there exists a holomorphic function $\Phi: \mathbb{C} \to \mathbb{C}$ such that
    \begin{equation*}
        g(z) = \Phi(f(z)), \quad z \in \mathbb{C}.
    \end{equation*}
    Conversely, if $\Phi$ is a holomorphic function on $\mathbb{C}$, then $\Phi \circ f$ is a $H^1_{\mathrm{loc}}$ solution to \eqref{eqn:beltrami-stoilow}. 
\end{theorem}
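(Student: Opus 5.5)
The plan is to first upgrade the regularity of the arbitrary solution $g$ from $W^{1,1}_{\mathrm{loc}}$ to $H^1_{\mathrm{loc}}$. Once that is done, set $\Phi := g\circ f^{-1}$ and show that $\Phi$ satisfies the Cauchy--Riemann equation weakly, so Weyl's lemma makes it holomorphic. The converse direction will be a chain-rule computation.

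\emph{Step 1 (regularity upgrade).} Fix a cutoff $\eta\in C_c^\infty(\mathbb{C})$ and put $w=\eta g$. By the Sobolev embedding in two dimensions, $g\in W^{1,1}_{\mathrm{loc}}\subset L^2_{\mathrm{loc}}$, and $w$ solves
\[
\partial_{\bar z}w-\mu\,\partial_z w=(\partial_{\bar z}\eta-\mu\,\partial_z\eta)\,g=:F\in L^2(\mathbb{C}).
\]
Since $w$ has compact support, $\partial_z w=\mathcal{S}\,\partial_{\bar z}w$, where $\mathcal{S}$ is the Beurling transform. This gives $(I-\mu\mathcal{S})\partial_{\bar z}w=F$.

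The operator $\mathcal{S}$ is an isometry of $L^2$, so $\|\mu\mathcal{S}\|_{L^2\to L^2}\le k<1$ and $I-\mu\mathcal{S}$ is invertible on $L^2$, producing a solution $\omega\in L^2$. To conclude that $\partial_{\bar z}w=\omega$, I would use the Neumann series on $L^p$ for $p$ close to $2$ (the range where $k\|\mathcal{S}\|_{L^p}<1$). I would combine this with a duality and approximation argument on the $W^{1,1}$ solution: mollify $\mu$ and $g$, and pass to the limit using that the difference $v=\partial_{\bar z}w-\omega$ solves the homogeneous equation $v=\mu\mathcal{S}v$ in $L^1+L^2$.

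This identification of the a priori $L^1$ derivative with the $L^2$ solution is the step I expect to be the main obstacle. Invertibility of $I-\mu\mathcal{S}$ is only guaranteed for $p$ in a window around $2$, so the weak-$L^1$ end needs care. If the direct argument fails, I would instead invoke the improved-regularity theorem for Beltrami solutions (Caccioppoli/Bojarski type), as in \cite{astala2008elliptic}, in the integrability class in which the statement is posed. Granting this step, $g\in H^1_{\mathrm{loc}}$.

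\emph{Step 2 (factorization).} Because $f$ is a $K$-quasiconformal homeomorphism, $f^{-1}$ is also $K$-quasiconformal and lies in $H^1_{\mathrm{loc}}$. It maps null sets to null sets, and its Jacobian is positive a.e. For $g\in H^1_{\mathrm{loc}}$, the composition $\Phi=g\circ f^{-1}$ lies in $W^{1,1}_{\mathrm{loc}}$. The chain rule holds a.e.:
\[
\partial_{\bar w}\Phi=\partial_z g\,\overline{\partial_{\bar w}\,\overline{f^{-1}}}+\partial_{\bar z}g\,\partial_{\bar w}f^{-1}+\dots,
\]
which is justified by approximating $g$ by smooth functions in $H^1$ and using the change of variables with the quasiconformal Jacobian.

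At a.e.\ point, $Dg$ and $Df$ are real-linear maps with the same Beltrami coefficient $\mu(z)$. Hence $Dg = A\circ Df$ for some complex-linear $A$, since two $\mathbb{R}$-linear maps with the same complex dilatation differ by a complex-linear factor. Therefore $\partial_{\bar w}\Phi=0$ a.e. Weyl's lemma then gives that $\Phi$ is holomorphic on $f(\mathbb{C})=\mathbb{C}$, and $g=\Phi\circ f$.

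\emph{Step 3 (converse).} For entire $\Phi$ and $f\in H^1_{\mathrm{loc}}$, the function $\Phi'\circ f$ is locally bounded, because $f$ is continuous. So $\Phi\circ f\in H^1_{\mathrm{loc}}$ and the chain rule applies. It gives $\partial_{\bar z}(\Phi\circ f)=\Phi'(f)\,\partial_{\bar z}f=\mu\,\Phi'(f)\,\partial_z f=\mu\,\partial_z(\Phi\circ f)$, which is the Beltrami equation.
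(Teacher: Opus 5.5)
Your Step 1 cannot be completed, and the obstruction is not technical. With $g$ only in $W^{1,1}_{\mathrm{loc}}$ the statement is false for every $k\in(0,1)$, so no argument can show that your $v$ vanishes.

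\textbf{Counterexample.} Let $K=\frac{1+k}{1-k}$ and take the radial stretch $f(z)=z|z|^{1/K-1}$. This is a $K$-quasiconformal homeomorphism of $\mathbb{C}$ solving $\partial_{\bar z}f=\mu\,\partial_z f$ with $\mu(z)=-k\,z/\bar z$. Put $g=1/f$, i.e.\ $g(z)=|z|^{1-1/K}/z$.
\begin{itemize}
\item Away from the origin, $g$ is a holomorphic function of $f$, so it solves the same Beltrami equation there.
\item We have $|g|=|z|^{-1/K}$ and $|Dg|=|z|^{-1-1/K}$, so $g\in W^{1,p}_{\mathrm{loc}}(\mathbb{C})$ for every $p<\frac{2K}{K+1}=1+k$.
\item The origin is removable for the distributional equation, because $\int_{|z|=r}|g|\,\mathrm{d}s=2\pi r^{1-1/K}\to0$.
\end{itemize}
Thus $g$ is a $W^{1,1}_{\mathrm{loc}}$ solution on all of $\mathbb{C}$. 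Yet it is unbounded near $0$, so $g\neq\Phi\circ f$ for every entire $\Phi$; indeed $g\circ f^{-1}(w)=1/w$.

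In your notation, take a cutoff $\eta\equiv1$ near $0$. Then $v=\partial_{\bar z}(\eta g)-\omega$ is a nonzero element of $L^1+L^2$ with $v=\mu\mathcal{S}v$. It is nonzero because $\partial_{\bar z}(\eta g)\notin L^2$. This is exactly what you hoped to exclude.

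Your fallback fails for the same reason. Improved regularity of Beltrami solutions holds for $q$ near $2$ (Bojarski) and, sharply, for $q\ge 1+k$ (Astala--Iwaniec--Saksman, Petermichl--Volberg). The example shows it cannot reach $q=1$.

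\textbf{What should be proved instead.} The paper gives no proof of its own; it quotes \cite[Theorem~5.5.1]{astala2008elliptic}, where the second solution is taken in $W^{1,2}_{\mathrm{loc}}$. As printed, $g$ inherits the $W^{1,1}_{\mathrm{loc}}$ condition from the display, and that statement is too strong.
\begin{itemize}
\item For the homeomorphism $f$ the weaker condition is harmless. A $W^{1,1}_{\mathrm{loc}}$ homeomorphic solution satisfies $\int_E J_f\le|f(E)|$ and $|Df|^2\le KJ_f$ a.e. Hence it lies in $W^{1,2}_{\mathrm{loc}}$ and is $K$-quasiconformal. You use this in Step 2 without comment, and you should justify it.
\item The $W^{1,2}_{\mathrm{loc}}$ hypothesis on $g$ is all the paper needs. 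In \Cref{prop:uniqness-elliptic}, $\psi-i\varphi$ lies in $W^{1,\infty}_{\mathrm{loc}}$.
\end{itemize}
Once you assume $g\in H^1_{\mathrm{loc}}$ and discard Step 1, your Steps 2 and 3 are the standard proof and are correct. In Step 2, $g\circ f^{-1}\in W^{1,2}_{\mathrm{loc}}$. The a.e.\ chain rule, with $\partial_z f\neq0$ a.e., gives $\partial_{\bar w}(g\circ f^{-1})=0$, and Weyl's lemma then applies. Step 3 is the direct chain-rule converse.
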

 
Lastly, we shall also need the following result.
\begin{lemma}
[{\cite[Corollary~3.10.4]{astala2008elliptic}}]
\label{lem:quasiconformal-holder}
If $f: \mathbb{C} \to \mathbb{C}$ is a $K$-quasiconformal mapping and $z_0, z_1, z_2 \in \mathbb{C}$, then
\begin{equation*}
    \frac{|f(z_0) - f(z_1)|}{|f(z_0) - f(z_2)|} \le \eta\left(\frac{|z_0-z_1|}{|z_0-z_2|}\right)
\end{equation*}
where $\eta(t) = C_K\max\{t^K, t^{1/K}\}\quad t \in [0, \infty)$, for a constant $C_K > 0$.
\end{lemma}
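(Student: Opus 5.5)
By the invariance of the statement under similarities, I will first normalize: pre-compose $f$ with $z\mapsto z_0+(z_2-z_0)z$ and post-compose with $w\mapsto (w-f(z_0))/(f(z_2)-f(z_0))$. Both maps are conformal, so the composite is still $K$-quasiconformal, and both ratios in the statement are unchanged. Hence it suffices to treat $z_0=0$, $z_2=1$, $f(0)=0$, $f(1)=1$, and to show $|f(z)|\le C_K\max\{|z|^K,|z|^{1/K}\}$ for all $z$. The tool will be the quasi-invariance of the conformal modulus of ring domains, $\tfrac1K\,\mathrm{mod}(A)\le \mathrm{mod}(f(A))\le K\,\mathrm{mod}(A)$. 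I take this as the standard geometric equivalent of the analytic definition \eqref{ineq:distortion}, proved by comparing extremal lengths through the pointwise distortion bound. It is combined with Teichmüller's extremal estimate: a ring domain separating $\{0,w\}$ from $\{1,\infty\}$ has modulus at most $\tfrac1{2\pi}\log\bigl(C(1+1/|w|)\bigr)$, with $C$ an absolute constant (e.g. $16$).

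\emph{Case $t:=|z|\le 1$ (the $t^{1/K}$ branch).} Assume $t<1$ and set $A=\{t<|\zeta|<1\}$, so $\mathrm{mod}(A)=\tfrac1{2\pi}\log(1/t)$. The ring $A$ separates $\{0,z\}$ from $\{1,\infty\}$ (after shrinking the outer radius slightly, or by a limiting argument since $1\in\partial A$). Since $f$ is a homeomorphism of $\hat{\mathbb{C}}$ fixing $\infty$, the image ring $f(A)$ separates $\{0,f(z)\}$ from $\{1,\infty\}$. Teichmüller's bound then gives
\[
\tfrac1{2\pi K}\log(1/t)\;\le\;\mathrm{mod} f(A)\;\le\;\tfrac1{2\pi}\log\bigl(C(1+1/|f(z)|)\bigr).
\]
If $|f(z)|\le 1$ this yields $|f(z)|\le 2C\,t^{1/K}$. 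If $|f(z)|>1$ the claim is trivial, after first ensuring $|f(z)|$ is bounded for $|z|\le1$. For that I would use a compactness argument for the normalized family, which is equicontinuous by the same modulus estimate. Alternatively, apply the same ring argument to $\{|\zeta|>1\}$-type annuli around $\infty$, which bounds $|f(z)|\le C'$ when $|z|\le 1$.

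\emph{Case $t>1$ (the $t^K$ branch).} I apply the first case with the roles of the points swapped. Consider $A=\{1<|\zeta|<t\}$, which separates $\{0,1\}$ from $\{z,\infty\}$. Its image separates $\{0,1\}$ from $\{f(z),\infty\}$, so $\mathrm{mod} f(A)\ge \tfrac1{2\pi K}\log t$. Teichmüller's estimate for rings separating $\{0,1\}$ from $\{w,\infty\}$, namely $\mathrm{mod}\le\tfrac1{2\pi}\log\bigl(C(1+|w|)\bigr)$, gives only $|f(z)|\gtrsim t^{1/K}$. That is a lower bound, not the upper bound needed. To get the upper bound, I run the same argument for $f^{-1}$, which is also $K$-quasiconformal with the same normalization. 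This yields $|z|\ge c\,|f(z)|^{1/K}$, i.e.\ $|f(z)|\le C t^{K}$, as required.

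The main obstacle is justifying the two analytic inputs rather than the bookkeeping. These are the modulus quasi-invariance starting from the $H^1_{\mathrm{loc}}$ definition (this requires absolute continuity on lines and the Lusin property of quasiconformal maps) and the sharp form of Teichmüller's extremal problem. In the spirit of the paper, I would cite both from \cite{astala2008elliptic} and present only the normalization and ring-comparison argument above.
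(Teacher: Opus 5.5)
Your argument is correct. The paper does not prove this lemma; it imports it verbatim from \cite{astala2008elliptic}, so there is no internal proof to compare with. What you give is the classical derivation of this distortion bound: quasi-invariance of the modulus of ring domains combined with Teichm\"uller's extremal estimate. Your normalization by similarities is legitimate.

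The two loose ends you flag close cleanly with the ring argument itself. Take $|z|\le 1$ with $s:=|f(z)|>1$. The annulus $B=\{1<|\omega|<s\}$ separates $\{0,1\}$ from $\{f(z),\infty\}$, so $f^{-1}(B)$ separates $\{0,1\}$ from $\{z,\infty\}$. Applying your two-sided inequality to $A=f^{-1}(B)$ gives $\mathrm{mod}\,f^{-1}(B)\ge \frac{1}{2\pi K}\log s$. Teichm\"uller gives $\mathrm{mod}\,f^{-1}(B)\le \frac{1}{2\pi}\log\bigl(16(1+|z|)\bigr)\le \frac{1}{2\pi}\log 32$. Hence $s\le 32^K$, with no compactness argument needed.

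The same computation with $|z|=t>1$ gives $s\le (32t)^K$, which is your Case 2. It uses only the half $\mathrm{mod}\,B\le K\,\mathrm{mod}\,f^{-1}(B)$ of the modulus inequality. So you do not need a separate theorem that $f^{-1}$ is $K$-quasiconformal; that fact is already encoded in the two-sided modulus bound you assume.

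Altogether $C_K=32^{K+1}$ works. Also, no shrinking of the outer radius is needed in Case 1: separation refers to the complementary components, and $1$ lies in $\{|\zeta|\ge 1\}$.
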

Observe that if $f$ is $K$-quasiconformal and $f(0) = 0, f(1) = 1$, then by \Cref{lem:quasiconformal-holder} for $z_0 = 0, z_1  =z, z_2=1$ and $z_0 = 0, z_1 = 1, z_2 = z$, we obtain
\begin{equation*}
   \frac{1}{\eta(\frac{1}{|z|})}\le |f(z)| \le \eta(|z|) 
\end{equation*}
or
\begin{equation}
\label{ineq:quasi-holder-est}
 \frac{1}{C_K} \min\{|z|^K, |z|^{\frac{1}{K}}\} \le |f(z)| \le C_K\max\{|z|^K, |z|^{\frac{1}{K}}\}
\end{equation}

For the proofs of the foregoing theorems and a detailed treatment of the theory and applications of quasiconformal mappings of the plane, we refer the reader to \cite{astala2008elliptic}. We are now ready to prove the uniqueness statement.

\begin{proposition}
\label{prop:uniqness-elliptic}
    Let $\alpha := \frac{\sup \rho}{\inf \rho} = \frac{1}{\inf \rho}$. Assume that $\psi:\mathbb{R}^2 \to \mathbb{R}$, $\psi \in {W}_{\mathrm{loc}}^{1, \infty}(\mathbb{R}^2)$ is $m$-fold symmetric with $m > 2\alpha$ and satisfies
    \begin{itemize}
        \item $\nabla\cdot (\rho \nabla \psi) = 0$ in the sense of distributions
        \item $|\psi(x)| \le C|x|^2$ for each $x \in \mathbb{R}^2$
    \end{itemize}
    Then $\psi = 0$.
    
\end{proposition}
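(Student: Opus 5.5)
The plan is to encode $\psi$ in the quasiregular map $f=\psi-i\varphi$ of \eqref{def:f}, factor $f$ through a homeomorphic solution $F$ of the same Beltrami equation via \Cref{thm:stoilow}, and then show that the holomorphic factor is an $m$-fold symmetric polynomial of degree at most $2\alpha<m$, hence constant.

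\textbf{Step 1: the map $f$ and its symmetry.} Since $\nabla\cdot(\rho\nabla\psi)=0$ and $\psi\in W^{1,\infty}_{\mathrm{loc}}$, there is a stream function $\varphi\in W^{1,\infty}_{\mathrm{loc}}$ with $\varphi_{x_1}=\rho\psi_{x_2}$ and $\varphi_{x_2}=-\rho\psi_{x_1}$. Set $f=\psi-i\varphi$. By the computation preceding \eqref{eqn:beltrami}, $f$ satisfies $\partial_{\bar z}f=\mu\,\partial_z f$ with $|\mu|\le k'=\frac{\alpha-1}{\alpha+1}$. Here $\mu:=\frac{1-\rho}{1+\rho}e^{-2i\arg\partial_z f}$ where $\partial_z f\neq0$, and $\mu:=0$ on the set $\{\partial_z f=0\}$.

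Let $R z=e^{2\pi i/m}z$. Because $\rho$ and $\psi$ are $m$-fold symmetric, $\varphi\circ R$ is again a stream function for $\psi$, so $\varphi\circ R-\varphi\equiv c$ is constant. Iterating $m$ times gives $mc=0$, hence $f\circ R=f$. The chain rule gives
\[
\partial_z(f\circ R)=e^{2\pi i/m}(\partial_z f)\circ R,\qquad \partial_{\bar z}(f\circ R)=e^{-2\pi i/m}(\partial_{\bar z}f)\circ R .
\]
Combined with $f\circ R=f$, this yields the equivariance $\mu(Rz)=e^{4\pi i/m}\mu(z)$. This holds also on the zero set, which is $R$-invariant and on which $\mu=0$.

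\textbf{Step 2: the normalized homeomorphism and its symmetry.} Let $F$ be the normalized $\alpha$-quasiconformal solution of \Cref{thm:riemann} for this $\mu$. Put $\tilde F(z):=e^{-2\pi i/m}F(Rz)$. By the equivariance of $\mu$, $\tilde F$ is again a homeomorphic solution of the same Beltrami equation, and it fixes $0$ and $\infty$. By \Cref{thm:stoilow}, $\tilde F=\Phi_0\circ F$ with $\Phi_0$ an entire injective map fixing $0$, so $\Phi_0(w)=cw$. Applying the operation $F\mapsto e^{-2\pi i/m}F\circ R$ $m$ times returns $F$, which forces $c^m=1$.

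Thus $F(Rz)=\zeta F(z)$ with $\zeta=e^{2\pi i/m}c$ satisfying $\zeta^m=1$. Moreover $\zeta$ must be a primitive $m$-th root of unity. Otherwise $F\circ R^j=F$ for some $0<j<m$, which contradicts the injectivity of $F$.

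\textbf{Step 3: factorization and growth.} By \Cref{thm:stoilow}, $f=\Phi\circ F$ with $\Phi$ entire. Since $F^{-1}$ is also $\alpha$-quasiconformal and normalized, \eqref{ineq:quasi-holder-est} gives $|F^{-1}(w)|\le C|w|^{\alpha}$ for $|w|\ge1$. Hence
\[
|\mathrm{Re}\,\Phi(w)|=|\psi(F^{-1}(w))|\le C|w|^{2\alpha}\qquad(|w|\ge1).
\]
The Borel--Carathéodory inequality, or Cauchy estimates applied to $\mathrm{Re}\,\Phi$ on large circles, then shows that $\Phi$ is a polynomial of degree at most $2\alpha$.

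\textbf{Step 4: conclusion.} From $f\circ R=f$ we get $\Phi(\zeta F(z))=\Phi(F(z))$. Since $F$ is onto, $\Phi(\zeta w)=\Phi(w)$ for all $w$. With $\zeta$ a primitive $m$-th root of unity, only monomials of degree divisible by $m$ can appear in $\Phi$. Since $\deg\Phi\le2\alpha<m$, $\Phi$ is constant. Finally $|\psi(0)|\le C\cdot0$ gives $\psi(0)=0$, so $\psi=\mathrm{Re}\,\Phi\equiv0$.

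\textbf{Main obstacle.} The step I expect to be most delicate is Step 2: making $\mu$ genuinely equivariant (including on the degenerate set $\{\partial_z f=0\}$) and pinning down the symmetry of $F$ from the uniqueness in \Cref{thm:riemann}. The growth transfer in Step 3 is comparatively routine once \eqref{ineq:quasi-holder-est} is applied to $F^{-1}$.
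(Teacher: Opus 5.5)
Your proposal is correct and follows the paper's proof essentially step for step. The steps are: the quasiregular map $f=\psi-i\varphi$, the equivariance $\mu(e^{2\pi i/m}z)=e^{4\pi i/m}\mu(z)$, and the covariance $F(e^{2\pi i/m}z)=\zeta F(z)$ of the normalized homeomorphic solution with $\zeta$ a primitive $m$-th root of unity. These are followed by the Stoilow factorization, the growth bound $|\mathrm{Re}\,\Phi(w)|\lesssim |w|^{2\alpha}$, and the monomial-degree argument. The differences are cosmetic. You get the covariance of $F$ from Stoilow plus the fact that injective entire maps are affine, whereas the paper divides by $\chi(e^{2\pi i/m})$ and invokes uniqueness of the normalized solution in the measurable Riemann mapping theorem. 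You also explicitly justify $f\circ R=f$ (via the constant in $\varphi\circ R-\varphi$) and the definition of $\mu$ on $\{\partial_z f=0\}$, which the paper leaves implicit.
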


\begin{proof}
Assuming $\rho, \psi$ satisfy the stated hypotheses, define $f:\mathbb{C} \to \mathbb{C}$ by
    \begin{equation*}
        f(z) = \psi - i\varphi
    \end{equation*}
where $\varphi \in H^1_{\mathrm{loc}}$ is a stream function for $\psi$ as in \eqref{def:f}. Hence by the preceding discussion, $f$ satisfies the Beltrami equation 
    \begin{equation*}
        \partial_{\bar{z}} f = \mu_f(z)\partial_{z} f
    \end{equation*}
and is $\alpha$-quasiregular. 
    Since $\rho, \psi$ are $m$-fold symmetric, so is $f$; thus $f(z) = f(e^{\frac{2\pi i}{m}}z)$. In addition, we may assume $f(0) = 0$, since $\varphi$ is defined up to a constant and $\psi(0) = 0$.     Note that the symmetry of $\rho$ and $f$ imply 
    \begin{equation}
    \label{eqn:mu-symm}
        e^{\frac{4\pi i}{m}}\mu_f(z) =\mu_f( e^{\frac{2\pi i}{m}}z)
    \end{equation}

    By \Cref{thm:stoilow}, we may write
    \begin{equation*}
        f(z) = \Phi(\chi(z))
    \end{equation*}
    where $\Phi$ is a holomorphic function and $\chi(z) \in H^1_{\mathrm{loc}}(\mathbb{C})$ is the unique normalized (i.e.  $\chi(0) = 0$, $\chi(1) = 1$, and  $\chi(\infty) = \infty$) $K$-quasiconformal homeomorphism given by the measurable Riemann mapping theorem. 

    Consider now the homeomorphism $\frac{\chi(e^{\frac{2\pi i}{m}}z)}{\chi(e^{\frac{2\pi i}{m}})}$ which is also normalized, and by \eqref{eqn:mu-symm} solves the same equation as $\chi(z)$. Hence by uniqueness, 
    \begin{equation*}
       \chi(e^{\frac{2\pi i}{m}}z)= \lambda\chi(z)
    \end{equation*}
    with $\lambda = \chi(e^{\frac{2\pi i}{m}})$. Iterating $m$ times, we deduce
    \begin{equation*}
        \chi(z) = \chi(e^{2\pi i}z)  = \lambda^m \chi(z)
    \end{equation*}
    so that $\lambda$  is necessarily an $m$-th root of unity---a primitive one in fact, since $\chi$ is a homeomorphism. Hence $\lambda = e^{\frac{2\pi i n}{m}}$ for some $n$ with $\gcd{(n, m)} = 1$. 

    Now $f$ is $m-$fold symmetric, so that
    \begin{equation*}
       \Phi(\chi(z)) =  \Phi(\chi(e^{\frac{2\pi i}{m}}z)) = \Phi(e^{\frac{2\pi i n}{m}}\chi(z)).
    \end{equation*}
    Thus $\Phi(w) = \Phi(e^{\frac{2\pi i n}{m}}w)$ for $w \in \chi(\mathbb{C}) = \mathbb{C}$.

    By assumption, we have
    \begin{equation*}
        |\mathrm{Re}(\Phi(\chi(z))|= |\psi(z)| \le C|z|^2.
    \end{equation*}
    Setting $w = \chi(z)$, we have from \Cref{lem:quasiconformal-holder} and \eqref{ineq:quasi-holder-est} that
    \begin{equation*}
        |\mathrm{Re}(\Phi(w))|\le C|\chi^{-1}(w)|^2 \le C_{\alpha} \max\{|w|^{\alpha}, |w|^{1/\alpha}\}^2 \le C_\alpha|w|^{2\alpha}, \quad |w| \ge 1
    \end{equation*}
    Now since $\Phi$ is holomorphic, the above estimate implies that it is a polynomial of degree not exceeding $\lfloor2\alpha\rfloor$. Moreover, the invariance $\Phi(w) = \Phi(e^{\frac{2\pi i n}{m}}w)$ and $\gcd(n,m)=1$ imply that every non-constant monomial appearing in $\Phi$ has degree a positive multiple of $m$. Since $\deg \Phi \le 2\alpha < m$, no such monomial can occur. Hence $\Phi$ is constant, and $f(0) = 0$ implies $\Phi \equiv 0$. Thus $f \equiv 0$ and hence $\psi \equiv 0$.
    This concludes the proof of the proposition. 
\end{proof}
\subsection{Existence}
Our approach is to solve the existence problem for
\begin{equation}
\label{eqn:elliptic-inhom}
    \nabla\cdot(\rho \nabla \psi) = f
\end{equation}
by the method of continuity. Define
\begin{equation}
\label{def:X_m}
    X_m := \left\{ f \in \dot{W}^{2, \infty}(\mathbb{R}^2)  : f\ m\text{-fold symmetric and }\|f\|_{X} :=\left\|\frac{ f}{|x|^2}\right\|_{L^\infty}  + \|\nabla^2 f\|_{\mathring{C}^{0, \gamma}}< \infty. \right \}
\end{equation}
The main issue is to establish \textit{a priori} estimates for \eqref{eqn:elliptic-inhom}. We will achieve this in two steps. First we use a scaling argument to prove Schauder estimates in the $\mathring{C}^{k, \gamma}$ framework as a consequence of the classical Schauder estimates in the $C^{k, \gamma}$ setting (see \cite{gilbarg1998elliptic} for instance). Secondly, using  a ``blow up'' argument, we prove an $L^\infty$ \textit{a priori} estimate  on $|x|^{-2}\psi$ as a consequence of the uniqueness assertion of \Cref{prop:uniqness-elliptic}. It is in this step that we use $m$-fold symmetry. We are forced to argue indirectly here because in closing the \textit{a priori} estimate, a naive use of the maximum principle does not seem to work without a smallness assumption on $r|\partial_r \rho|$ (or $m$ large relative to the size of $r|\partial_r \rho|$). Since $r|\partial_r \rho(t)|$ can grow exponentially in time for a transported density $\rho(t)$, we cannot afford such an assumption. 

For the following proof, it is convenient to use the equivalent norm  (see \eqref{eqn:C-ring-norm-eq})
\begin{equation*}
    \|f\|_{L^\infty} + \sup_{x\ne y} \left\{\min\{|x|^\gamma, |y|^\gamma\}\frac{|f(x)-f(y)|}{|x-y|^\gamma}\right\}
\end{equation*}
on $\mathring{C}^{0, \gamma}$.

\begin{proofof}{\Cref{thm:elliptic-existence}}
   Let $s \in [0, 1]$ and define $L_{\rho, s}:  X_m\to\mathring{C}_m^{0,\gamma}$ by
\begin{equation*}
   L_{\rho, s}[\psi] := (1-s)\Delta\psi + s(\rho\Delta\psi + \nabla\psi \cdot \nabla \rho) = \nabla\cdot (\rho_s\nabla \psi), 
\end{equation*}
where $\rho_s = 1-s + s\rho$.

Thus $L_0 = \Delta$ and $L_1 = \nabla\cdot(\rho \nabla)$. By \Cref{thm:poisson}, $L_{\rho,0}=\Delta$ maps $X_m$ onto $\mathring{C}_m^{0,\gamma}$. To invoke the method of continuity, we need to prove the \textit{a priori} estimate
\begin{equation}
\label{ineq:aprori-X_m}
    \|\psi\|_{X_m}\le C\|L_{\rho, s}\psi\|_{\mathring{C}^{0,\gamma}}
\end{equation}
for some $C >0$, and all $s \in  [0,1]$, $\psi \in X_m$. This together with \Cref{prop:uniqness-elliptic} will establish the theorem. 

\underline{Step 1.} First, we will prove the following \textit{a priori} estimate as a consequence of classical Schauder estimates and a scaling argument: there is a constant $C = C(\|\rho\|_{\mathring{C}^{1,\gamma}}, \inf \rho, \sup \rho)$ so that
\begin{equation}
\label{ineq:apriori-schauder-2+}
   \|\nabla^2 \psi\|_{\mathring{C}^{0,\gamma}}\le C\left(\left\|\frac{\psi}{|x|^2}\right\|_{L^\infty} + \| L_{\rho, s} \psi\|_{\mathring{C}^{0,\gamma}}\right)
\end{equation}
for all $\psi \in X_m$.
Indeed, let $A_R = \{x: \frac{R}{4} < |x| < 2R\}$ and $A_R' =  \{x: \frac{R}{2} < |x| < \frac{3R}{2} \}$, with $R > 0$. Now let $f_s := L_{\rho, s} \psi$ and define $\rho_R(x) = \rho_s(Rx)$,  $\psi_R(x) = \psi(Rx)$ and  $f_R(x) = R^2f_s(Rx)$. Then we have
\begin{equation*}
    \rho_R\Delta \psi_R + \nabla \rho_R \cdot \nabla \psi_R = f_R.
\end{equation*}
Thus by classical Schauder estimates on $A_1$, there is a constant $C = C(\|\rho_R\|_{C^{1,\gamma}(A_1)}, \inf \rho, \sup \rho)$ (we may take $C$ to be polynomial in the first argument) such that--omitting the dependence of $C$ on $\inf \rho, \sup \rho$--
\begin{align*}
       \|\nabla^{2} \psi_R\|_{L^\infty(A_1')} + [\nabla^{2}\psi_R]_{C^\gamma(A_1')} &\le C\left(\|\rho_R\|_{C^{1,\gamma}(A_1)}\right)\left(\|\psi_R\|_{L^\infty(A_1)} + \| f_R\|_{C^{0,\gamma}(A_1)}\right) \\
       &= C\left(\|\rho_s\|_{\mathring{C}^{1,\gamma}(A_R)}\right)\left(\|\psi\|_{L^\infty(A_R)} + R^2\| f_s\|_{\mathring{C}^{0,\gamma}(A_R)}\right)\\
        &\lesssim C\left(\|\rho\|_{\mathring{C}^{1,\gamma}(A_R)}\right)\left(\|\psi\|_{L^\infty(A_R)} + R^2\| f_s\|_{\mathring{C}^{0,\gamma}(A_R)}\right)
\end{align*}
This implies that
\begin{equation*}
   R^{2}\|\nabla^{2} \psi\|_{L^\infty(A_R')} + R^{2+\gamma}[\nabla^{2}\psi]_{C^\gamma(A_R')} \le C(\|\rho\|_{\mathring{C}^{1,\gamma}(A_R)})(\|\psi\|_{L^\infty(A_R)} + R^2\| f_s\|_{\mathring{C}^{0,\gamma}(A_R)})
\end{equation*}
and upon dividing by $R^2$,
\begin{align*}
   \|\nabla^{2} \psi\|_{L^\infty(A_R')} + R^{\gamma}[\nabla^{2}\psi]_{C^\gamma(A_R')} &\le C\left(\|\rho\|_{\mathring{C}^{1,\gamma}(A_R)}\right)\left(R^{-2}\|\psi\|_{L^\infty(A_R)} + \| f_s\|_{\mathring{C}^{0,\gamma}(A_R)}\right) \\
   &\le C\left(\|\rho\|_{\mathring{C}^{1,\gamma}(A_R)}\right)\left(\left\|\frac{\psi}{|x|^2}\right\|_{L^\infty(A_R)} + \| f_s\|_{\mathring{C}^{0,\gamma}(A_R)}\right)\\
   &\le C\left(\|\rho\|_{\mathring{C}^{1,\gamma}(\mathbb{R}^2)}\right)\left(\left\|\frac{\psi}{|x|^2}\right\|_{L^\infty(\mathbb{R}^2)} + \| f_s\|_{\mathring{C}^{0,\gamma}(\mathbb{R}^2)}\right)\\
   &\le C\left(M\right)\left(\left\|\frac{\psi}{|x|^2}\right\|_{L^\infty(\mathbb{R}^2)} + \| f_s\|_{\mathring{C}^{0,\gamma}(\mathbb{R}^2)}\right)
\end{align*}

The right hand side above is independent of $R$, hence it follows in particular that, for $x \in \mathbb{R}^2\setminus \{0\}$, 
\begin{equation*}
    |\nabla^{2}\psi(x)|  \le  C\left(M\right)\left(\left\|\frac{\psi}{|x|^2}\right\|_{L^\infty(\mathbb{R}^2)} + \| f_s\|_{\mathring{C}^{0,\gamma}(\mathbb{R}^2)}\right)
\end{equation*}
Next, let $x,y \in \mathbb{R}^2\setminus \{0\}$. If $|x-y| <\frac{1}{2}|x|$, then $x, y \in A_{|x|}$ and hence by the preceding,
\begin{multline*}
    \min\{|x|^\gamma, |y|^\gamma\}\frac{|\nabla^2\psi(x)-\nabla^2\psi(y)|}{|x-y|^\gamma}\le |x|^\gamma\frac{|\nabla^2\psi(x)-\nabla^2\psi(y)|}{|x-y|^\gamma} \\ \le |x|^\gamma[\nabla^2\psi]_{C^\gamma(A'_{|x|})} \le C\left(M\right)\left(\left\|\frac{\psi}{|x|^2}\right\|_{L^\infty(\mathbb{R}^2)} + \| f_s\|_{\mathring{C}^{0,\gamma}(\mathbb{R}^2)}\right)
\end{multline*}
On the other hand, if $|x-y| \ge \frac{1}{2}|x|$, then
\begin{align*}
    \min\{|x|^\gamma, |y|^\gamma\}\frac{|\nabla^2\psi(x)-\nabla^2\psi(y)|}{|x-y|^\gamma}\le 2^{1+\gamma}\|\nabla^2 \psi\|_{L^\infty}\frac{ \min\{|x|^\gamma, |y|^\gamma\}}{|x|^\gamma} \le 2^{1+\gamma} \|\nabla^2 \psi\|_{L^\infty}.
\end{align*}
In view of the $L^\infty$ estimate above, this completes the proof of \eqref{ineq:apriori-schauder-2+}. 

\underline{Step 2.} Next, we will prove that there exists $C = C (m, \inf \rho, \sup\rho, M)$ such that
\begin{equation*}
    \left\|\frac{\psi}{|x|^2}\right\|_{L^\infty(\mathbb{R}^2)} \le C\|L_{\rho, s}\psi\|_{\mathring{C}^{0,\gamma}}
\end{equation*}
for all $\psi \in X_m$. The $m$-fold symmetry assumption will play a role here.

For $m > 2c_1/c_0$, define the class
\begin{equation}
\label{def:rho-const-class}
   \mathcal{D}(c_0, c_1, m, M) := \{\rho \in \mathring{C}_m^{1,\gamma}(\mathbb{R}^2): \inf \rho = c_0, \sup \rho = c_1, \|\rho\|_{\mathring{C}^{1,\gamma}} \le M\}. 
\end{equation}

We shall show in fact that there exists a constant  $C = C(c_0, c_1, m, M) > 0$ such that for all  $\rho \in \mathcal{D}(c_0, c_1, m, M)$, the following inequality holds for all $\psi \in X_m \text{ and } s \in [0,1]$
\begin{equation}
    \label{ineq:apriori:Linfty-schauder}
    \left\|\frac{\psi}{|x|^2}\right\|_{L^\infty(\mathbb{R}^2)} \le C\|L_{\rho, s}\psi\|_{\mathring{C}^{0,\gamma}}
\end{equation}
We remark that  the constant $C$ is uniform for all $\rho$ in $\mathcal{D}$.

Now suppose towards a contradiction that \eqref{ineq:apriori:Linfty-schauder} is not true. Then there are sequences of $\rho^j \in \mathcal{D}$,  $\psi_j \in X_m$ and $s_j \in [0, 1]$, $j \ge 1$, with $\left\|\frac{\psi_j}{|x|^2}\right\|_{L^\infty} = 1$ such that
\begin{equation*}
    \|L_{\rho^j, s_j} \psi_j \|_{\mathring{C}^{0,\gamma}} < \frac{1}{j}.
\end{equation*}
We hereafter denote $f_j := L_{\rho^j, s_j} \psi_j$ and $\rho_j := 1 - s_j + s_j\rho^j$. Thus $\nabla \cdot(\rho_j \nabla \psi_j) = f_j$ and $\|f_j \|_{\mathring{C}^{0,\gamma}} < \frac{1}{j}$. In particular,
\begin{equation}
    \label{conv:f_j}
    f_j \to 0 \quad \mathrm{in} \quad L^\infty(\mathbb{R}^2).
\end{equation}

Since $\left\|\frac{\psi_j}{|x|^2}\right\|_{L^\infty} = 1$, there exists a point $x_j \in \mathbb{R}^2 \setminus \{0\}$ so that
\begin{equation}
    \label{ineq:counter:case1:ext}
    \frac{|\psi_j(x_j)|}{|x_j|^2}  \ge \frac{1}{2}
\end{equation}
Set $x_j = R_j y_j$ where $|y_j| =1$ and $R_j = |x_j|$ and define
\begin{equation*}
    \tilde{\psi}_j(x) = \frac{\psi_j(R_j x)}{R_j^2}, \quad \tilde{\rho}_j(x) = \rho_j(R_j x). 
\end{equation*}

Note that the sequences $\tilde{{\psi}}_j(x)$ and $\tilde{\rho}_j(x)$ constructed above satisfy the following properties:
\begin{enumerate}
    \item $\|\tilde{\psi}_j\|_{X_m} = \|\psi_j\|_{X_m}$ and $\left\|\frac{\tilde{\psi}_j}{|x|^2}\right\|_{L^\infty} = \left\|\frac{\psi_j}{|x|^2}\right\|_{L^\infty} = 1$ for all $j$.
    \item $\|\tilde{\rho}_j\|_{\mathring{C}^{1,\gamma}} = \|\rho_j\|_{\mathring{C}^{1,\gamma}}$ for all $j$.
    \item for each $j \ge 1$, $|\tilde{\psi}_j(y_j)|  \ge \frac{1}{2}$ for some $y_j \in \mathbb{S}^1$.
    \item $\nabla \cdot(\tilde{\rho}_j(x) \nabla \tilde{\psi}_j(x)) = f_j(R_jx)$ for all $j$.
    \item $\tilde{\psi}_j$ and $\tilde{\rho}_j$ are $m$-fold symmetric for all $j$.
\end{enumerate}

In view of \eqref{ineq:apriori-schauder-2+} and the scale invariance of the $\mathring{C}^{0,\gamma}$ norm, we have
\begin{equation*}
   \|\nabla^2 \tilde{\psi}_j\|_{\mathring{C}^{0,\gamma}} = \|\nabla^2 \psi_j\|_{\mathring{C}^{0,\gamma}}\le C\left(1 + \|f_j \|_{\mathring{C}^{0,\gamma}}\right) \le 2C.
\end{equation*}
Thus by the Arzela-Ascoli theorem, we deduce that there exist $s_\infty \in [0, 1]$ and $m$-fold symmetric functions $\psi_\infty$ and $\rho_\infty$ such that, up to a subsequence,

\begin{align}
        s_j &\to s_\infty\\
       \label{conv:C1top} \tilde{\psi}_j &\to \psi_\infty \quad \mathrm{in} \quad C^1_{\mathrm{loc}}(\mathbb{R}^2)\\
        \label{conv:C2top}  \tilde{\psi}_j &\to \psi_\infty \quad \mathrm{in} \quad C^2_{\mathrm{loc}}(\mathbb{R}^2\setminus \{0\})\\
        \label{conv:rhoC0top}\tilde{\rho}_j &\to \rho_\infty \quad \mathrm{in} \quad L^\infty_{\mathrm{loc}}(\mathbb{R}^2\setminus \{0\})
\end{align}
Let $\chi \in C^\infty_0(\mathbb{R}^2)$ be a test function. Then from property 4, we have
\begin{equation*}
    -\int \tilde{\rho}_{j} \nabla \tilde{\psi}_j \cdot \nabla \chi = \int \chi f_j(R_j \ \cdot)
\end{equation*}
Since $\|f_j\|_{L^\infty}\to 0$, the right hand side converges to $0$ as $j \to \infty$, while the left hand side converges in view of \eqref{conv:C1top} and \eqref{conv:rhoC0top} (and the dominated convergence theorem for instance) to
\begin{equation*}
        -\int \rho_{\infty} \nabla \psi_\infty \cdot \nabla \chi = 0
\end{equation*}

Next, by pointwise convergence and $\tilde{\psi}_j(0) = 0$, we have $|\psi_\infty(x)| \le |x|^2$. Furthermore, we note that 
\begin{equation*}
2\frac{\sup \rho_\infty}{\inf \rho_\infty} \le 2\frac{1-s_\infty + s_\infty c_1}{1-s_\infty + s_\infty c_0} \le 2\frac{c_1}{c_0} < m.
\end{equation*}
Hence by \Cref{prop:uniqness-elliptic}, $\psi_\infty \equiv 0$. On the other hand, after passing to a further subsequence we may assume $y_j \to y_\infty \in \mathbb{S}^1$. Since $\tilde{\psi}_j \to \psi_\infty$ uniformly on $\mathbb{S}^1$, property 3 gives
\begin{equation*}
    |\psi_\infty(y_\infty)| = \lim_{j\to\infty}|\tilde{\psi}_j(y_j)| \ge \frac{1}{2},
\end{equation*}
a contradiction.

This completes the proof of \eqref{ineq:apriori:Linfty-schauder} and together with \eqref{ineq:apriori-schauder-2+}, this proves \eqref{ineq:aprori-X_m}. Hence by the continuity method $L_{\rho, s}$ maps $X_m$ onto $\mathring{C}_m^{0,\gamma}$ for all $s \in [0,1]$, and the desired conclusion follows. 
\end{proofof}
\begin{remark}
     We emphasize that  the constant $C$ on the right hand side of \eqref{ineq:aprori-X_m} is uniform for all $\rho$ in $\mathcal{D}$. Furthermore $C$ (i.e. the best possible constant in \eqref{ineq:aprori-X_m} for $\rho$ in $\mathcal{D}$) is non-decreasing in $M$, since $\mathcal{D}(c_0, c_1, m, M_1) \subset \mathcal{D}(c_0, c_1, m, M_2)$ for any $M_2 > M_1$.
\end{remark}
\begin{remark}
    Note also that since $\psi(0)=\nabla\psi(0) = 0$ for $\psi \in X_m$, we have by the Hardy inequality that $\left\|\frac{\nabla\psi}{|x|}\right\|_{L^\infty} \le \|\nabla^2\psi\|_{L^\infty}$; hence all the claimed estimates on $\psi$ in  \Cref{thm:elliptic-existence} are proved.
\end{remark}
We now record higher regularity estimates.
\begin{corollary}
\label{cor:schauder-higher-ord}
    Assume that $\gamma \in (0,1), k\ge 0$ and  that $0 < \inf \rho\le \rho \in \mathring{C}_m^{k+1,\gamma}(\mathbb{R}^2)$ with $\|\rho\|_{\mathring{C}_m^{k+1,\gamma}} \le M$ and $f \in \mathring{C}^{k,\gamma}_m(\mathbb{R}^2)$, $m > 2 \alpha$. If $\psi \in X_m$ satisfies
    \begin{equation*}
        \nabla \cdot(\rho\nabla \psi) = f,
    \end{equation*}
    then $\nabla^2\psi \in  \mathring{C}^{k,\gamma}(\mathbb{R}^2)$ and
    \begin{equation}
    \label{ineq:schauder-kth}
        \|\nabla^2 \psi\|_{\mathring{C}^{k,\gamma}} \le C_k(m, \inf\rho, \sup \rho,M) \|f\|_{\mathring{C}_m^{k,\gamma}},
    \end{equation}
\end{corollary}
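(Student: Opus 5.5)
We argue by induction on $k$, with the case $k=0$ being \Cref{thm:elliptic-existence} (more precisely, the a priori estimate \eqref{ineq:aprori-X_m} applied with $s=1$). The uniqueness statement of \Cref{prop:uniqness-elliptic} and the blow-up argument are only needed at the base level. Once we have
\[
\|\psi/|x|^2\|_{L^\infty}+\|\nabla^2\psi\|_{\mathring{C}^{0,\gamma}}\le C_0\|f\|_{\mathring{C}^{0,\gamma}},
\]
every higher-order bound should follow from purely local Schauder estimates on dyadic annuli, rescaled exactly as in Step 1 of the proof of \Cref{thm:elliptic-existence}. Since $\|f\|_{\mathring{C}^{0,\gamma}}\lesssim\|f\|_{\mathring{C}^{k,\gamma}}$ by the nesting of the norms, the right-hand side of the base estimate is controlled by $\|f\|_{\mathring{C}^{k,\gamma}}$.

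For the inductive step, fix $R>0$ and rescale: set $\psi_R(x)=R^{-2}\psi(Rx)$, $\rho_R(x)=\rho(Rx)$ and $f_R(x)=f(Rx)$, so that
\[
\rho_R\Delta\psi_R+\nabla\rho_R\cdot\nabla\psi_R=f_R \quad\text{on } A_1.
\]
The key point is the scaling identity $\|\rho_R\|_{C^{k+1,\gamma}(A_1)}\lesssim\|\rho\|_{\mathring{C}^{k+1,\gamma}}\le M$, and likewise $\|f_R\|_{C^{k,\gamma}(A_1)}\lesssim\|f\|_{\mathring{C}^{k,\gamma}}$. This holds because on $A_R$ one has $|x|\approx R$, so the weights $|x|^j$ in the definitions \eqref{def:angular-k-Lip} and \eqref{def:angular-k-Holder} are comparable to $R^j$.

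The classical interior higher-order Schauder estimate for divergence-form operators with $C^{k+1,\gamma}$ coefficients (Gilbarg--Trudinger) then gives
\[
\|\psi_R\|_{C^{k+2,\gamma}(A_1')}\le C(M,\inf\rho,\sup\rho,k)\left(\|\psi_R\|_{L^\infty(A_1)}+\|f_R\|_{C^{k,\gamma}(A_1)}\right).
\]
By the base case, $\|\psi_R\|_{L^\infty(A_1)}\le 4\|\psi/|x|^2\|_{L^\infty}\le C\|f\|_{\mathring{C}^{0,\gamma}}$, uniformly in $R$. Scaling back yields, for $x\in A_R'$,
\[
|x|^{j}|\nabla^{j+2}\psi(x)|\le C\|f\|_{\mathring{C}^{k,\gamma}}\quad (j\le k),\qquad R^{k+\gamma}[\nabla^{k+2}\psi]_{C^\gamma(A_R')}\le C\|f\|_{\mathring{C}^{k,\gamma}}.
\]
Because these bounds are uniform in $R$, they give pointwise control of $|x|^k\nabla^k(\nabla^2\psi)$. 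Together with $\|\nabla^2\psi\|_{L^\infty}$ from the base case, the intermediate weighted derivatives appearing in the nested norm $\mathring{C}^{k-1,1}$ are handled the same way, using each $j\le k$.

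It remains to pass from the local seminorm to the global weighted H\"older seminorm $[|\cdot|^{k+\gamma}\nabla^{k+2}\psi]_{C^\gamma}$. We use the equivalent norm of \eqref{eqn:C-ring-norm-eq}, applied to $|x|^k\nabla^{k+2}\psi$, and split into the same two cases as in Step 1.
\begin{itemize}
\item If $|x-y|<\tfrac12|x|$, both points lie in $A_{|x|}'$. The local seminorm bound controls the oscillation of $\nabla^{k+2}\psi$, and the factor $|x|^k$ is Lipschitz at scale $|x|$ with derivative of size $|x|^{k-1}$. That error term is bounded by $|x|^{k-1}|\nabla^{k+2}\psi|\cdot|x-y|^{1-\gamma}\cdot|x|^{\gamma}$, which is again controlled by the pointwise bound.
\item If $|x-y|\ge\tfrac12|x|$, the difference quotient is bounded crudely by $2\||x|^k\nabla^{k+2}\psi\|_{L^\infty}$.
\end{itemize}

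The main obstacle is bookkeeping rather than analysis. One must check that the equivalence between the $\mathring{C}^{k,\gamma}$ norm and the "dyadic-annulus" norm holds, namely
\[
\|h\|_{\mathring{C}^{k,\gamma}}\approx\sup_R\bigl\|h(R\,\cdot)\bigr\|_{C^{k,\gamma}(A_1')},
\]
which presumably follows from the properties collected in \Cref{app:a}. One must also check that the classical Schauder constant depends only polynomially on $\|\rho_R\|_{C^{k+1,\gamma}(A_1)}$, and hence only on $M$.
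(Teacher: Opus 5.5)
Your proposal is correct and follows the same route as the paper. The $k=0$ a priori estimate, including the blow-up/uniqueness bound on $\|\psi/|x|^2\|_{L^\infty}$, supplies the zeroth-order input, and the higher-order weighted bounds then come from classical $C^{k+2,\gamma}$ interior Schauder estimates on rescaled annuli, as in Step 1 of the proof of \Cref{thm:elliptic-existence}; the paper omits these details and you supply them correctly (no genuine induction is needed, since only the $k=0$ bound is used).
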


\begin{proof}
The proof of the case $k = 0$ is contained in the proof of \eqref{ineq:apriori-schauder-2+} above. The case $k  > 0$ is almost identical---the $k$-th order Schauder estimates on annuli together with the rescaling argument yields \eqref{ineq:schauder-kth}. We omit the details.
\end{proof}

 \section{Local well-posedness for inhomogeneous Euler with discrete rotational symmetry}
 \label{sec:lwp}
In this section, we present a proof of \Cref{thm:lwp}. Recall that the system under consideration is
\begin{equation*}
    \begin{rcases}
        \partial_t \rho + u \cdot \nabla \rho  = 0 \\
        \partial_t \Omega + u\cdot \nabla \Omega = \frac{1}{2}\nabla|u|^2 \cdot \nabla^\perp \rho\\
    \end{rcases}
\end{equation*}
for $t > 0$, where for each $t$, $u = \nabla^\perp \psi$, and $\psi$ is the unique solution of the elliptic equation
\begin{equation*}
    \nabla\cdot (\rho \nabla \psi) = \Omega.
\end{equation*}
The definition of $\psi$ now makes sense in light of \Cref{thm:elliptic-existence}.

As preparation for the non-linear problem, we begin with linear transport equations in the $\mathring{C}^{k, \gamma}(\mathbb{R}^2)$ framework.
\subsection{Linear transport equations in $\mathring{C}^{k, \gamma}$}
\label{sec:transport}
We consider the linear transport equation 
\begin{equation}
\label{eqn:linear-transport}
    \begin{rcases}
        \partial_t f + v \cdot \nabla f = g\\
        f(0, x) = f_0(x)
    \end{rcases}
\end{equation}
in the class $\mathring{C}^{k, \gamma}(\mathbb{R}^2)$. The main conclusion is the following. 
\begin{proposition}
    \label{prop:linear-theory}
    Let $k\ge1$ and $0<\gamma\le1$. Assume $v(t,0)=0$ for a.e. $t$,
    \[
       \nabla v\in L^1(0,T;\mathring{C}^{k-1,\gamma}(\mathbb{R}^2)),
       \qquad
       g\in L^1(0,T;\mathring{C}^{k,\gamma}(\mathbb{R}^2)),
    \]
    and let $f_0\in\mathring{C}^{k,\gamma}(\mathbb{R}^2)$. Then \eqref{eqn:linear-transport} has a unique solution satisfying
    \[
       f\in L^\infty(0,T;\mathring{C}^{k,\gamma}(\mathbb{R}^2))
       \cap C([0,T];\mathring{C}^{k,\beta}(\mathbb{R}^2))
       \qquad\text{for every }0<\beta<\gamma.
    \]
    For almost every $t$,
    \begin{equation}
    \label{ineq:linear-differential}
        \frac{\mathrm{d}}{\mathrm{d}t}\|f(t)\|_{\mathring{C}^{k,\gamma}}
        \le C_{k,\gamma}\|\nabla v(t)\|_{\mathring{C}^{k-1,\gamma}}\|f(t)\|_{\mathring{C}^{k,\gamma}}
        +\|g(t)\|_{\mathring{C}^{k,\gamma}}.
    \end{equation}
    Consequently,
    \begin{equation}
    \label{ineq:linear-gronwall}
        \|f(t)\|_{\mathring{C}^{k,\gamma}}
        \le \left(\|f_0\|_{\mathring{C}^{k,\gamma}}+\int_0^t\|g(s)\|_{\mathring{C}^{k,\gamma}}\,\mathrm{d}s\right)
        \exp\left(C_{k,\gamma}\int_0^t\|\nabla v(s)\|_{\mathring{C}^{k-1,\gamma}}\,\mathrm{d}s\right).
    \end{equation}
    For $k=0$, it suffices to assume $v(t,0)=0$, $\nabla v\in L^1(0,T;L^\infty)$, and $g\in L^1(0,T;\mathring{C}^{0,\gamma})$. Then the solution belongs to $L^\infty(0,T;\mathring{C}^{0,\gamma})\cap C([0,T];\mathring{C}^{0,\beta})$ for every $0<\beta<\gamma$, and for almost every $t$,
    \begin{equation}
    \label{ineq:linear-differential-k0}
        \frac{\mathrm{d}}{\mathrm{d}t}\|f(t)\|_{\mathring{C}^{0,\gamma}}
        \le C_\gamma\|\nabla v(t)\|_{L^\infty}\|f(t)\|_{\mathring{C}^{0,\gamma}}
        +\|g(t)\|_{\mathring{C}^{0,\gamma}}.
    \end{equation}
\end{proposition}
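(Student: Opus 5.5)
The plan is to solve \eqref{eqn:linear-transport} by characteristics and to estimate the $\mathring{C}^{k,\gamma}$ norm along the flow. Since $\nabla v\in L^1(0,T;L^\infty)$ (for $k\ge1$ this follows from $\mathring{C}^{k-1,\gamma}\subset L^\infty$) and $v(t,0)=0$, the field satisfies $|v(t,x)|\le\|\nabla v(t)\|_{L^\infty}|x|$. Hence $v$ is globally Lipschitz with an $L^1$-in-time constant, and the flow map $X(t,\cdot)$ exists, is a bi-Lipschitz homeomorphism, and fixes the origin. The solution is
\[
f(t,X(t,x))=f_0(x)+\int_0^t g(s,X(s,x))\,\mathrm{d}s .
\]
Uniqueness in $L^\infty(0,T;\mathring{C}^{k,\gamma})$ follows because any such weak solution is constant along characteristics minus the source; equivalently, for the difference of two solutions one gets an $L^\infty$ bound from the formula. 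Continuity into $\mathring{C}^{k,\beta}$ for $\beta<\gamma$ then follows from continuity in $L^\infty_{\mathrm{loc}}$ of $t\mapsto f(t)$, which comes from the formula, combined with interpolation between the uniform $\mathring{C}^{k,\gamma}$ bound and $L^\infty$ closeness. The interpolation must be done in the scale-invariant setting: on dyadic annuli $A_R$, rescaled to unit size as in the proof of \Cref{thm:elliptic-existence}, the classical H\"older interpolation applies uniformly in $R$.

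The core is the differential inequality \eqref{ineq:linear-differential}. For $k=0$, I would use the equivalent norm $\|f\|_{L^\infty}+\sup_{x\ne y}\min\{|x|^\gamma,|y|^\gamma\}|f(x)-f(y)|/|x-y|^\gamma$. The $L^\infty$ part is preserved up to $\int\|g\|$. For the seminorm, write $f(t,X(t,x))-f(t,X(t,y))$ via $f_0(x)-f_0(y)$ plus the source. The flow satisfies
\[
|X(t,x)-X(t,y)|\ge e^{-V(t)}|x-y|,\qquad |X(t,x)|\le e^{V(t)}|x|,\qquad V(t)=\int_0^t\|\nabla v\|_{L^\infty},
\]
where the second bound uses $X(t,0)=0$. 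Together these give growth of the weighted seminorm by a factor $e^{C_\gamma V(t)}$. Differentiating the resulting Gronwall form, or equivalently arguing on short time intervals, yields \eqref{ineq:linear-differential-k0}.

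For $k\ge1$, I would differentiate the equation $k$ times and consider the weighted quantity $F=|x|^k\nabla^k f$, working on the seminorm part $[|x|^{\gamma+k}\nabla^k f]_{C^\gamma}$ and on $\||x|^{j}\nabla^j f\|_{L^\infty}$. Then $\nabla^k f$ solves
\[
\partial_t\nabla^k f+v\cdot\nabla\nabla^k f=\nabla^k g-\sum_{j=1}^{k}\binom{k}{j}\nabla^j v\ast\nabla^{k+1-j}f .
\]
Multiplying by $|x|^k$ produces a commutator from $v\cdot\nabla|x|^k=k|x|^{k-2}(x\cdot v)|x|^{0}$, which is bounded by $k\|\nabla v\|_{L^\infty}|x|^k$ because $|v(x)|\le\|\nabla v\|_{L^\infty}|x|$. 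The lower-order terms are of the form $|x|^{j-1}\nabla^j v\cdot|x|^{k+1-j}\nabla^{k+1-j}f$, and each factor is controlled by $\|\nabla v\|_{\mathring{C}^{k-1,\gamma}}$ and $\|f\|_{\mathring{C}^{k,\gamma}}$ respectively. The intermediate weighted norms would be handled by the interpolation inequalities for $\mathring{C}$ spaces from \Cref{app:a}. Applying the $k=0$ estimate to this forced transport equation, using the product estimate $\|ab\|_{\mathring{C}^{0,\gamma}}\lesssim\|a\|_{\mathring{C}^{0,\gamma}}\|b\|_{\mathring{C}^{0,\gamma}}$, gives \eqref{ineq:linear-differential}. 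Gronwall then gives \eqref{ineq:linear-gronwall}.

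I expect the main obstacle to be justifying these manipulations at the stated regularity, since $\nabla^k f$ solves the transport equation only in a weak sense and $\nabla^{k}v$ is merely H\"older. I would first mollify $f_0$, $g$ and $v$, taking care to preserve $v(0)=0$ and the weighted bounds, for instance by mollifying at scale proportional to $|x|$ or by subtracting the value at $0$. Next I would prove the estimates uniformly for the smooth problems. Finally I would pass to the limit using weak-$*$ compactness in $\mathring{C}^{k,\gamma}$ and the uniqueness argument above. The almost-everywhere differential inequality then follows because $t\mapsto\|f(t)\|$ is bounded by an absolutely continuous Gronwall majorant on every subinterval.
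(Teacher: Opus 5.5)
Your proposal is correct and follows essentially the paper's route: a bi-Lipschitz flow fixing the origin, the Duhamel formula along characteristics, commuting $|x|^k\nabla^k$ through $\partial_t+v\cdot\nabla$ and closing with the weighted product estimates plus the pair-of-characteristics H\"older bound, then Gr\"onwall, with lower-exponent continuity obtained by scale-invariant interpolation where the paper instead cites the composition estimates of \Cref{app:a}. Two small points to tighten. First, the time continuity you interpolate against must be in the global $L^\infty(\mathbb{R}^2)$ norm, not $L^\infty_{\mathrm{loc}}$; it is global, since displacements are $O\big(|x|\int_s^t\|\nabla v\|_{L^\infty}\big)$ and the data are angularly regular. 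Second, a Gr\"onwall bound for the $\min\{|x|^\gamma,|y|^\gamma\}$-equivalent norm only gives the differential inequality for that norm, so to obtain \eqref{ineq:linear-differential-k0} as stated you should estimate $[|\cdot|^\gamma f]_{C^\gamma}$ directly, using that $(|X(t,x)|/|x|)^\gamma=1+O(V(t))$ and that $|x|\nabla$ of this ratio is $O(V(t))$.
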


\begin{proof}
Since $\nabla v\in L^1_tL^\infty_x$, the flow $\Phi(s;t,x)$ is well defined and bi-Lipschitz, with
\begin{equation}
\label{ineq:lin:chord-arc-est}
        \exp\left(-\int_s^t\|\nabla v(\tau)\|_{L^\infty}\,\mathrm{d}\tau\right)
        \le \frac{|\Phi(s;t,x)-\Phi(s;t,y)|}{|x-y|}
        \le
        \exp\left(\int_s^t\|\nabla v(\tau)\|_{L^\infty}\,\mathrm{d}\tau\right).
\end{equation}
The solution is given by
\[
    f(t,x)=f_0(\Phi(t;0,x))+\int_0^t g(s,\Phi(t;s,x))\,\mathrm{d}s.
\]
The differential estimate \eqref{ineq:linear-differential} is the standard H\"older transport estimate: commute spatial derivatives of order at most $k$ with $\partial_t+v\cdot\nabla$, use the weighted product estimates from \Cref{app:a}, and estimate the top-order H\"older seminorm along pairs of characteristics. The case $k=0$ is identical without differentiating. Gr\"onwall gives \eqref{ineq:linear-gronwall} and its zeroth-order counterpart. Finally, the characteristic formula, \eqref{ineq:lin:chord-arc-est}, and the composition estimates of Appendix A show strong continuity in every lower H\"older exponent $0<\beta<\gamma$.
\end{proof}
\subsection{Local existence and uniqueness}
\label{sec:lwp:exist}
In the estimation of $\mathring{C}^{k, \gamma}$ norms, we will often use without explicit mention the calculus lemmas in \Cref{app:a}. We also note the following lemma for use below.
\begin{lemma}
    For $\psi$ in the space $X_m$ defined in \eqref{def:X_m}, we have 
\begin{equation}
\label{ineq:C-ring-frac-est}
    \left\|\frac{\nabla\psi}{|x|}\right\|_{\mathring{C}^{0,\gamma}} \lesssim \|\nabla^2\psi\|_{L^\infty} + \left\|\frac{\nabla\psi}{|x|}\right\|_{L^\infty}
\end{equation}
with the implicit constant depending on $\gamma$.
\end{lemma}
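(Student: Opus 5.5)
We must show, for $F := \nabla\psi/|x|$, that
\[
\|F\|_{L^\infty} + \sup_{x\ne y}\min\{|x|^\gamma,|y|^\gamma\}\frac{|F(x)-F(y)|}{|x-y|^\gamma} \lesssim \|\nabla^2\psi\|_{L^\infty} + \left\|\frac{\nabla\psi}{|x|}\right\|_{L^\infty}.
\]
We work with the equivalent norm on $\mathring{C}^{0,\gamma}$ used in the proof of \Cref{thm:elliptic-existence}. The $L^\infty$ part is immediate. (By the Hardy-type remark, $\|F\|_{L^\infty}\le\|\nabla^2\psi\|_{L^\infty}$ anyway, since $\nabla\psi(0)=0$ for $\psi\in X_m$.) So everything reduces to the weighted Hölder seminorm. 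We split into the usual near and far regimes relative to $|x|$; by symmetry we may assume $|x|\le|y|$.

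\emph{Far regime,} $|x-y|\ge \frac12|x|$. Here we do not use any cancellation. We bound
\[
\min\{|x|^\gamma,|y|^\gamma\}\frac{|F(x)-F(y)|}{|x-y|^\gamma}\le 2\|F\|_{L^\infty}\frac{|x|^\gamma}{|x-y|^\gamma}\le 2^{1+\gamma}\|F\|_{L^\infty},
\]
exactly as in Step 1 of the proof of \Cref{thm:elliptic-existence}.

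\emph{Near regime,} $|x-y|<\frac12|x|$. Then the whole segment $[x,y]$ stays in $\{|z|\ge \frac12|x|\}$, so $F$ is Lipschitz there. Indeed,
\[
\nabla F = \frac{\nabla^2\psi}{|x|} - \frac{\nabla\psi\otimes x}{|x|^3},
\]
which gives $|\nabla F(z)|\le |z|^{-1}\bigl(\|\nabla^2\psi\|_{L^\infty}+\|F\|_{L^\infty}\bigr)\le 2|x|^{-1}\bigl(\|\nabla^2\psi\|_{L^\infty}+\|F\|_{L^\infty}\bigr)$ on the segment. (Equivalently, $|x|\nabla F\in L^\infty$, i.e.\ $F\in\mathring{C}^{0,1}$.) By the mean value theorem,
\[
|x|^\gamma\frac{|F(x)-F(y)|}{|x-y|^\gamma}\le 2|x|^{\gamma-1}|x-y|^{1-\gamma}\bigl(\|\nabla^2\psi\|_{L^\infty}+\|F\|_{L^\infty}\bigr)\le 2^{\gamma}\bigl(\|\nabla^2\psi\|_{L^\infty}+\|F\|_{L^\infty}\bigr),
\]
using $|x-y|<|x|/2$.

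Combining the two regimes proves \eqref{ineq:C-ring-frac-est}, with a constant depending only on $\gamma$; in effect this is the embedding $\mathring{C}^{0,1}\subset\mathring{C}^{0,\gamma}$. The only point needing care is checking that the segment avoids the origin in the near regime, so that the weight $1/|x|$ is comparable along it; otherwise the argument is routine.
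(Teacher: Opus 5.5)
Your proof is correct and follows the paper's argument. You dispose of the far regime $|x-y|\ge\frac12|x|$ trivially via $\|F\|_{L^\infty}$, and in the near regime you use the Lipschitz bounds on $\nabla\psi$ and $1/|x|$ to gain the factor $(|x-y|/|x|)^{1-\gamma}\le 2^{\gamma-1}$. The only cosmetic difference is that the paper writes $F(x)-F(y)=\frac{\nabla\psi(x)-\nabla\psi(y)}{|x|}+\frac{\nabla\psi(y)}{|y|}\cdot\frac{|y|-|x|}{|x|}$, which needs only the global Lipschitz bound on $\nabla\psi$ and the reverse triangle inequality, whereas your mean value theorem applied to $F$ itself requires the check (which you do carry out) that $[x,y]$ stays in $\{|z|>\frac12|x|\}$.
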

\begin{proof}
     Indeed, since $\nabla\psi(0) = 0$, we have $\left\|\frac{\nabla\psi}{|x|}\right\|_{L^\infty} \le \|\nabla^2\psi\|_{L^\infty}$ and so for the $\mathring{C}^{0, \gamma}$ semi-norm, it suffices consider $x,y$ such that $|x-y| < \frac{1}{2}|x|$. Then 
\begin{align*}
  |x|^\gamma \left( \frac{\nabla\psi(x)}{|x|}- \frac{\nabla\psi(y)}{|y|}\right) =  \frac{\nabla\psi(x)-\nabla \psi(y)}{|x|}|x|^\gamma + |x|^\gamma\frac{\nabla\psi(y)}{|y|}\frac{|y|-|x|}{|x|}
\end{align*}
and so
\begin{align*}
    \frac{|x|^\gamma \left|\frac{\nabla\psi(x)}{|x|}- \frac{\nabla\psi(y)}{|y|}\right|}{|x-y|^\gamma} &\le \left(\|\nabla^2\psi\|_{L^\infty} +\left\|\frac{\nabla\psi}{|x|}\right\|_{L^\infty}\right)\left(\frac{|x-y|}{|x|}\right)^{1-\gamma}\\
    &\le 2^{\gamma-1}\left(\|\nabla^2\psi\|_{L^\infty} +\left\|\frac{\nabla\psi}{|x|}\right\|_{L^\infty}\right)
\end{align*}
\end{proof}

\begin{proofof}{\Cref{thm:lwp}}
    We prove uniqueness first. Suppose $(\rho_i, \Omega_i)$ for $i = 1,2$ is a pair of distinct solutions on $[0, T]$ with the same initial data $(\rho_0, \Omega_0)$. Then
    \begin{equation*}
        \partial_t (\rho_1-\rho_2) + u_1 \cdot \nabla(\rho_1 - \rho_2) = (u_2 - u_1) \cdot \nabla \rho_2
    \end{equation*}
    and
    \begin{equation*}
               \partial_t (\Omega_1-\Omega_2) + u_1 \cdot \nabla(\Omega_1 - \Omega_2) = (u_2 - u_1) \cdot \nabla \Omega_2 + \frac{1}{2}(\nabla |u_1|^2 \cdot \nabla^\perp\rho_1 - \nabla |u_2|^2 \cdot \nabla^\perp\rho_2)
    \end{equation*}
    For $\nabla^\perp \psi_i = u_i$, we have $\nabla \cdot (\rho_i \nabla \psi_i) = \Omega_i$ and hence for each $t$,
    \begin{equation}
    \label{eqn:uniq:elliptic-diff}
        \nabla \cdot(\rho_1\nabla (\psi_1 - \psi_2)) = \nabla\cdot((\rho_2 - \rho_1)\nabla \psi_2) + \Omega_1 - \Omega_2
    \end{equation}
    Let $M_T$ be such that
    \begin{equation*}
        \sup_{0\le t \le T}\|\rho_i\|_{ \mathring{C}^{2, \gamma}}, \sup_{0\le t \le T}\|\Omega_i\|_{ \mathring{C}^{1, \gamma}} \le M_T
    \end{equation*}
     In the following, $C(M_T)$ is a constant depending on $M_T$ but its value may change from line to line. By \eqref{eqn:uniq:elliptic-diff}, \Cref{thm:elliptic-existence} and \eqref{ineq:C-ring-frac-est}, we have

     \begin{align}
         \left\|\frac{u_1-u_2}{|x|}\right\|_{\mathring{C}^{0,\gamma}} +   \|\nabla^2 (\psi_1-\psi_2)\|_{\mathring{C}^{0,\gamma}} &\le C(M_T)(\|\Omega_1 - \Omega_2\|_{\mathring{C}^{0,\gamma}} + \|\rho_1 - \rho_2\|_{\mathring{C}^{1, \gamma}} \left\|\frac{\nabla \psi_2}{|x|}\right\|_{\mathring{C}^{0,\gamma}}) \nonumber \\
         &\le C(M_T)(\|\Omega_1 - \Omega_2\|_{\mathring{C}^{0, \gamma}} + \|\rho_1 - \rho_2\|_{\mathring{C}^{1, \gamma}} )     \label{ineq:elliptic-diff}
     \end{align}
     We shall also use the corresponding first-order bound. Indeed,
     \begin{equation*}
     |x|\nabla\left(\frac{u_1-u_2}{|x|}\right)
     = \nabla(u_1-u_2)-\frac{u_1-u_2}{|x|}\otimes\frac{x}{|x|},
     \end{equation*}
     and the weighted product estimate therefore gives
     \begin{equation}
     \label{ineq:uniq-quotient-C1}
     \left\|\frac{u_1-u_2}{|x|}\right\|_{\mathring{C}^{1,\gamma}}
     \lesssim \|\nabla(u_1-u_2)\|_{\mathring{C}^{0,\gamma}}
     +\left\|\frac{u_1-u_2}{|x|}\right\|_{\mathring{C}^{0,\gamma}}
     \le C(M_T)\left(\|\Omega_1-\Omega_2\|_{\mathring{C}^{0,\gamma}}+\|\rho_1-\rho_2\|_{\mathring{C}^{1,\gamma}}\right).
     \end{equation}
     Denote $F = (u_2 - u_1) \cdot \nabla \Omega_2 + \frac{1}{2}(\nabla |u_1|^2 \cdot \nabla^\perp\rho_1 - \nabla |u_2|^2 \cdot \nabla^\perp\rho_2)$. Thus
    \begin{multline}
\|F\|_{\mathring{C}^{0,\gamma}} \le \left\|\frac{u_1-u_2}{|x|}\right\|_{\mathring{C}^{0,\gamma}}\||x| \nabla \Omega_2\|_{\mathring{C}^{0,\gamma}} + \left\|\frac{u_1-u_2}{|x|}\right\|_{\mathring{C}^{0,\gamma}}\||x| \nabla \rho_1\|_{\mathring{C}^{0,\gamma}}\|\nabla u\|_{\mathring{C}^{0,\gamma}}\\ + \left\|\frac{u_2}{|x|}\right\|_{\mathring{C}^{0,\gamma}}\||x| \nabla \rho_1\|_{\mathring{C}^{0,\gamma}}\|\nabla (u_1-u_2)\|_{\mathring{C}^{0,\gamma}} +  \left\|\frac{u_2}{|x|}\right\|_{\mathring{C}^{0,\gamma}}\||x| \nabla (\rho_1-\rho_2)\|_{\mathring{C}^{0,\gamma}}\|\nabla u_2\|_{\mathring{C}^{0,\gamma}}
    \end{multline}
    and in view of \eqref{ineq:elliptic-diff}, 
    \begin{equation*}
        \|F\|_{\mathring{C}^{0,\gamma}} \le C(M_T) (\|\Omega_1 - \Omega_2\|_{\mathring{C}^{0,\gamma}} + \|\rho_1 - \rho_2\|_{\mathring{C}^{1, \gamma}} )   
    \end{equation*}
Next, denote $G = (u_2 - u_1) \cdot \nabla \rho_2$ and note using \eqref{ineq:uniq-quotient-C1} that 
\begin{equation*}
    \|G\|_{\mathring{C}^{1, \gamma}} \le  \left\|\frac{u_1-u_2}{|x|}\right\|_{\mathring{C}^{1, \gamma}}\||x| \nabla \rho_2\|_{\mathring{C}^{1, \gamma}} \lesssim \|\rho_2\|_{\mathring{C}^{2, \gamma}} C(M_T)(\|\Omega_1 - \Omega_2\|_{\mathring{C}^{0, \gamma}} + \|\rho_1 - \rho_2\|_{\mathring{C}^{1, \gamma}} )
\end{equation*}
Thus $Q :=\Omega_1- \Omega_2$ and $R := \rho_1  - \rho_2$ respectively solve
\begin{equation*}
    \partial_t Q + u_1 \cdot\nabla Q = F,
\end{equation*}
\begin{equation*}
    \partial_t R + u_1 \cdot\nabla R = G.
\end{equation*}
Hence from \Cref{prop:linear-theory} and estimates on $F, G$ above, we have
\begin{align*}
        \|Q(t)\|_{\mathring{C}^{0, \gamma}}\le C(M_T)\int_0^t (\|Q\|_{\mathring{C}^{0, \gamma}} + \|R\|_{\mathring{C}^{1, \gamma}} )
\end{align*}
and
\begin{align*}
        \|R(t)\|_{\mathring{C}^{1, \gamma}} \le C(M_T)\int_0^t (\|Q\|_{\mathring{C}^{0, \gamma}} + \|R\|_{\mathring{C}^{1, \gamma}} )
\end{align*}
Since $R(0) = Q(0) = 0$, Gr\"{o}nwall's inequality implies that $Q(t) = R(t) = 0$ for $t \in [0, T]$. This contradiction concludes the proof of uniqueness. 

Next, we construct a solution via the following scheme.    Define $\Omega^0 = \Omega_0$ and $\rho^0 = \rho_0$. Given $\Omega^n$ and $\rho^n$, define $\Omega^{n+1}$ and $\rho^{n+1}$ inductively via
\begin{equation}
    \label{eqn:inho-2deuler:vort:iter-rho}
    \begin{rcases}
        \partial_t \rho^{n+1} + u^{n+1} \cdot \nabla \rho^{n+1}  = 0 \\
        \rho^{n+1}(0,x) = \rho_0 \\
    \end{rcases}
\end{equation}
and
\begin{equation}
    \label{eqn:inho-2deuler:vort:iter-Omega}
    \begin{rcases}
        \partial_t \Omega^{n+1} + u^{n+1}\cdot \nabla \Omega^{n+1} = \frac{1}{2}\nabla|u^{n+1}|^2 \cdot \nabla^\perp \rho^n =: F^n\\
        \Omega^{n+1}(0, x) = \Omega_0
    \end{rcases}
\end{equation}
where for each $t$, $u^n = \nabla^\perp \psi^n$, and $\psi^n$ satisfies the elliptic equation
\begin{equation}
\label{eqn:elliptic-bslaw-n}
    \nabla\cdot (\rho^{n} \nabla \psi^{n+1}) = \Omega^{n}.
\end{equation}

Set
\[
M_0:=\max\left\{\|\rho_0\|_{\mathring{C}^{2,\gamma}},\|\Omega_0\|_{\mathring{C}^{1,\gamma}}\right\},
\qquad M:=2M_0.
\]
Fix $0<\beta<\gamma$. Suppose inductively that
\[
\rho^n\in L^\infty(0,T;\mathring{C}_m^{2,\gamma})\cap C([0,T];\mathring{C}_m^{2,\beta}),
\qquad
\Omega^n\in L^\infty(0,T;\mathring{C}_m^{1,\gamma})\cap C([0,T];\mathring{C}_m^{1,\beta}),
\]
and
\begin{equation*}
    \sup_{0\le t \le T}\|\rho^n(t)\|_{\mathring{C}^{2, \gamma}},  \sup_{0\le t \le T}\|\Omega^n(t)\|_{\mathring{C}^{1, \gamma}} \le M.
\end{equation*}

The base case $n=0$ clearly holds.
Since $\Omega^n(t) \in \mathring{C}_m^{1, \gamma}$ and $\rho^n(t) \in \mathring{C}_m^{2, \gamma}$, \Cref{thm:elliptic-existence} and \Cref{cor:schauder-higher-ord} imply that $\psi^{n+1}(t)$ satisfies
\begin{equation}
\label{ineq:psi-n+1}
    \left\|\frac{\psi^{n+1}(t)}{|x|^2}\right\|_{L^\infty} + \left\|\frac{\nabla \psi^{n+1}(t)}{|x|}\right\|_{L^\infty} + \|\nabla^2 \psi^{n+1}(t)\|_{\mathring{C}^{1, \gamma}} \le C(M)\|\Omega^n(t)\|_{\mathring{C}^{1, \gamma}} \le MC(M)
\end{equation}
for $0 \le t \le T$. 
The constant $C$ also depends on $m, \inf \rho_0, \sup \rho_0$, but we have suppressed this dependence, since these parameters remain fixed for the current discussion.
Note that \eqref{ineq:psi-n+1} in particular shows that $\psi^{n+1}(t)|x|^{-2}, u^{n+1}(t)|x|^{-1} \in L^\infty(0, T; L^\infty)$ and $\nabla u^{n+1}(t) \in L^\infty(0, T;\mathring{C}_m^{1, \gamma})$. For $t,s\in[0,T]$ we have
\begin{equation}
\label{eqn:elliptic-diff}
\nabla\cdot\big(\rho^n(t)\nabla(\psi^{n+1}(t)-\psi^{n+1}(s))\big)
=\nabla\cdot\big((\rho^n(s)-\rho^n(t))\nabla\psi^{n+1}(s)\big)+\Omega^n(t)-\Omega^n(s).
\end{equation}
Applying the elliptic difference estimate to \eqref{eqn:elliptic-diff} at the lower exponent $\beta$ gives
\[
\left\|\frac{\psi^{n+1}(t)-\psi^{n+1}(s)}{|x|^2}\right\|_{L^\infty}
+\left\|\frac{u^{n+1}(t)-u^{n+1}(s)}{|x|}\right\|_{L^\infty}
+\|\nabla^2(\psi^{n+1}(t)-\psi^{n+1}(s))\|_{\mathring{C}^{1,\beta}}
\]
\[
\le C(M,\beta)\left(\|\Omega^n(t)-\Omega^n(s)\|_{\mathring{C}^{1,\beta}}
+\|\rho^n(t)-\rho^n(s)\|_{\mathring{C}^{2,\beta}}\right).
\]
Consequently $u^{n+1}|x|^{-1}\in C([0,T];L^\infty)$ and $\nabla u^{n+1}\in C([0,T];\mathring{C}^{1,\beta})$, while the endpoint $\mathring{C}^{1,\gamma}$ norm remains uniformly bounded by \eqref{ineq:psi-n+1}.

By \Cref{prop:linear-theory}, $\rho^{n+1}\in L^\infty(0,T;\mathring{C}_m^{2,\gamma})\cap C([0,T];\mathring{C}_m^{2,\beta})$. Its differential H\"older estimate \eqref{ineq:linear-differential}, together with \eqref{ineq:psi-n+1}, gives a constant $C(M)$, depending also on the fixed parameters $m,\gamma,\inf\rho_0,\sup\rho_0$, such that
\begin{equation*}
    \|\rho^{n+1}(t)\|_{\mathring{C}^{2,\gamma}}
    \le e^{tC(M)}\|\rho_0\|_{\mathring{C}^{2,\gamma}}.
\end{equation*}
Similarly, \Cref{prop:linear-theory} gives $\Omega^{n+1}\in L^\infty(0,T;\mathring{C}_m^{1,\gamma})\cap C([0,T];\mathring{C}_m^{1,\beta})$, and the source term in \eqref{eqn:inho-2deuler:vort:iter-Omega} satisfies
\begin{align*}
    \|F^n(t)\|_{\mathring{C}^{1,\gamma}}
    &\le C\||x|\nabla\rho^n(t)\|_{\mathring{C}^{1,\gamma}}
    \|\nabla u^{n+1}(t)\|_{\mathring{C}^{1,\gamma}}
    \left\|\frac{u^{n+1}(t)}{|x|}\right\|_{\mathring{C}^{1,\gamma}} \\
    &\le C(M).
\end{align*}
Hence the inhomogeneous transport estimate gives
\begin{equation*}
    \|\Omega^{n+1}(t)\|_{\mathring{C}^{1,\gamma}}
    \le e^{tC(M)}\left(\|\Omega_0\|_{\mathring{C}^{1,\gamma}}+tC(M)\right).
\end{equation*}
Since $M=2M_0$ is already fixed, we may choose $T=T(M)$ sufficiently small that
\[
e^{TC(M)}\le \frac32,
\qquad
TC(M)\le \frac{M_0}{3}.
\]
Then, for every $0\le t\le T$,
\[
\|\rho^{n+1}(t)\|_{\mathring{C}^{2,\gamma}}
\le \frac32 M_0<M,
\]
and
\[
\|\Omega^{n+1}(t)\|_{\mathring{C}^{1,\gamma}}
\le \frac32\left(M_0+\frac{M_0}{3}\right)=M.
\]
Thus the induction closes and we obtain the uniform bounds
\begin{equation}
\label{ineq:Omega_n-and-rho_n}
    \|\Omega^n(t)\|_{\mathring{C}^{1,\gamma}} \le M, \quad \|\rho^{n}(t)\|_{\mathring{C}^{2,\gamma}}  \le M
\end{equation}
for all $n \ge 0$ and $0\le t\le T$.

In addition, we have

\begin{equation}
\label{ineq:psi_n}
    \left\|\frac{\psi^{n}}{|x|^2}\right\|_{L^\infty} + \left\|\frac{\nabla \psi^{n}}{|x|}\right\|_{L^\infty} + \|\nabla^2 \psi^{n}\|_{\mathring{C}^{1, \gamma}} \le C(M)M
\end{equation}
for all $n \ge 0$. Since $\partial_t\rho^n = - u^n \cdot \nabla \rho^n$ and $\partial_t \Omega^n = -u^{n}\cdot \nabla \Omega^n + F^{n-1}$, we also have
\begin{equation}
\label{ineq:dtrho_n}
    \|\partial_t\rho^n\|_{L^\infty} \le \left\|\frac{u^n}{|x|}\right\|_{L^\infty}\||x|\nabla \rho^n\|_{L^\infty} \le M^2C(M)
\end{equation}
and
\begin{align}
\label{ineq:dtOmega_n}
        \|\partial_t\Omega^n\|_{L^\infty} &\le \left\|\frac{u^n}{|x|}\right\|_{L^\infty}\||x|\nabla \Omega^n\|_{L^\infty} + \|F^{n-1}\|_{L^\infty}\\ 
        & \nonumber\lesssim M^2C(M) + \||x|\nabla\rho^{n-1}(s)\|_{L^\infty} \|\nabla u^{n}(s)\|_{L^\infty} \left\|\frac{u^{n}(s)}{|x|}\right\|_{ L^\infty}\\
        & \nonumber\lesssim M^2C(M) + M^3C(M)^2
\end{align}
The time derivative bounds \eqref{ineq:dtrho_n}--\eqref{ineq:dtOmega_n}, together with the uniform spatial H\"older bounds \eqref{ineq:Omega_n-and-rho_n}, imply by interpolation that $\{\Omega^n\}$ is equicontinuous in $C^1_{\mathrm{loc}}(\mathbb{R}^2\setminus\{0\})$ and $\{\rho^n\}$ is equicontinuous in $C^2_{\mathrm{loc}}(\mathbb{R}^2\setminus\{0\})$. Hence Arzel\`a--Ascoli yields, after passing to a subsequence,
\begin{equation*}
   \Omega^n \to \Omega \quad \text{in } C([0,T];C^1_{\mathrm{loc}}(\mathbb{R}^2\setminus\{0\})),
   \qquad
   \rho^n \to \rho \quad \text{in } C([0,T];C^2_{\mathrm{loc}}(\mathbb{R}^2\setminus\{0\})).
\end{equation*}
Interpolating also in the weighted H\"older scale, in any exponent $0<\gamma'<\gamma$, and applying the elliptic difference estimate to \eqref{eqn:elliptic-bslaw-n} gives equicontinuity of $\{\psi^n\}$ in $C^2_{\mathrm{loc}}$; thus, after extracting the same subsequence,
\begin{equation*}
    \psi^n \to \psi \quad \text{in } C([0,T];C^2_{\mathrm{loc}}(\mathbb{R}^2\setminus\{0\})).
\end{equation*}

We may then pass to the limit in the equations
\begin{equation*}
    \begin{cases}
            \Omega^n(t,x) = \Omega_0(x) - \int_0^t u^n \cdot \nabla\Omega^n + \frac{1}{2}\nabla|u^{n}|^2 \cdot \nabla^\perp \rho^{n-1} \ \mathrm{d}s \\
    \rho^n(t,x) = \rho_0(x) - \int_0^t u^n \cdot \nabla \rho^n \ \mathrm{d}s\\
    \rho^{n-1} \Delta\psi^n + \nabla \rho^{n-1} \cdot \nabla \psi^n = \Omega^{n-1}
    \end{cases}
\end{equation*}
to obtain
\begin{equation}
\label{eqn:limiting-int-form}
    \begin{cases}
    \Omega(t,x) = \Omega_0(x) - \int_0^t u \cdot \nabla\Omega + \frac{1}{2}\nabla|u|^2 \cdot \nabla^\perp \rho \ \mathrm{d}s \\
    \rho(t,x) = \rho_0(x) - \int_0^t u \cdot \nabla \rho\ \mathrm{d}s\\
    \rho\Delta\psi + \nabla \rho \cdot \nabla \psi = \Omega
    \end{cases}
\end{equation}
for $(t,x) \in [0, T] \times \mathbb{R}^2\setminus \{0\}$. 

Next, since $|\Omega^n (t,x)|, ||x|\nabla\Omega^n(t,x)| \le M$ for each $(t,x) \in [0, T] \times \mathbb{R}^2\setminus \{0\}$, the pointwise convergence of $\Omega^n, \nabla \Omega^n$ implies that $|\Omega(t,x)|, ||x|\nabla\Omega(t,x)| \le M$ for $(t,x) \in [0, T] \times \mathbb{R}^2\setminus \{0\}$. In addition for $x,y \in \mathbb{R}^2\setminus \{0\},x \ne y$ we have
\begin{equation*}
    \frac{||x|^{1+\gamma}\nabla\Omega^n(t,x) -|y|^{1+\gamma}\nabla\Omega^n(t,y)|}{|x-y|^\gamma} \le M
\end{equation*}
so that the pointwise convergence of $\nabla\Omega^n$ shows that
\begin{equation*}
    \frac{||x|^{1+\gamma}\nabla\Omega(t,x) -|y|^{1+\gamma}\nabla\Omega(t,y)|}{|x-y|^\gamma} \le M
\end{equation*}
Hence $\Omega \in L^\infty(0, T; \mathring{C}^{1, \gamma}(\mathbb{R}^2))$. Likewise, $\rho \in L^\infty(0, T; \mathring{C}^{2, \gamma}(\mathbb{R}^2))$. We have similarly $|\psi(t,x)| \le MC(M)|x|^2$, $|\nabla\psi(t,x)| \le C(M)M|x|$ and $\nabla^2 \psi \in L^\infty(0, T; \mathring{C}^{1, \gamma})$. Thus for each $t$, $\psi(t)$ is actually continuously differentiable at $0$ and $\psi(t,0) = \nabla\psi(t, 0) = 0$. Note that the equations \eqref{eqn:limiting-int-form} may not be defined at the origin, but the equalities hold in $L^\infty(\mathbb{R}^2)$ for each $t$ (i.e. the left and right hand side agree as $L^\infty$ functions). 

The uniform endpoint bounds pass to the limit, so $\Omega\in L^\infty(0,T;\mathring{C}^{1,\gamma})$ and $\rho\in L^\infty(0,T;\mathring{C}^{2,\gamma})$. Moreover, the compactness argument above, applied at any exponent $0<\beta<\gamma$, gives $\Omega\in C([0,T];\mathring{C}_m^{1,\beta})$ and $\rho\in C([0,T];\mathring{C}_m^{2,\beta})$. Since $\beta<\gamma$ was arbitrary, this is the regularity asserted in \Cref{thm:lwp} and concludes the proof of existence.

\end{proofof}

\section{Radially homogeneous solutions of inhomogeneous Euler}
\label{sec:rad-hom}
This section is devoted to the study of radially homogeneous solutions of \eqref{eqn:inho-2deuler:vort}. As illustrated in the introduction, the Euler system in this case reduces to the one dimensional system \eqref{eqn:1dsys} on $\mathbb{S}^1$:
\begin{equation*}
    \begin{rcases}
        \partial_t g + 2G\partial_\theta g   = -(4G^2 + (\partial_\theta G)^2)\partial_\theta P \\
        \partial_t P + 2G\partial_\theta P = 0 \\
        \partial_\theta(P\partial_\theta G) + 4PG = g
    \end{rcases}
\end{equation*}

Consider the inhomogeneous Biot-Savart law (or rather its reduction under radial homogeneity) \[\partial_\theta(P\partial_\theta G) + 4PG = g.\] When $P$ is a constant, one may study the equation \[\partial_{\theta\theta} G + 4G = g\] on the Fourier side, and $m$-fold symmetry with $m \ge 3$ ensures orthogonality to $\sin(2\theta)$ and $\cos(2\theta)$ in the kernel of $\partial_{\theta\theta}  + 4$. 

In the non-constant case, the matter is not so simple. Fortunately, the second order ODE  $\partial_\theta(P\partial_\theta G) + 4PG = g$ is of Sturm-Liouville type, and ideas from the Sturm-Liouville theory allow us to study this system under the assumption of $m$-fold symmetry with $m > 4\sqrt{\alpha}$, where $\alpha = \frac{\sup P}{\inf P}$. The purpose of the assumption $m > 4\sqrt{\alpha}$ is to ensure that the kernel of $\partial_\theta(P\partial_\theta \cdot ) + 4P$ with periodic boundary conditions on $[-\frac{\pi}{m}, \frac{\pi}{m}]$ is trivial, so that the operator is invertible. Since $P$ is transported, it is obvious--although crucial--that $\alpha$ is invariant under the evolution of $P$.

In the following section, we shall study the operator $\partial_\theta(P\partial_\theta \cdot ) + 4P$  and use our findings to derive \textit{a priori} estimates for the system \eqref{eqn:1dsys} leading to global existence under \Cref{assumps}, i.e. we will prove \Cref{thm:gwp-1dsys}.

\subsection{Boundary value problem}
\label{sec:bvp}
In this section, we study the boundary value problem on $[-\frac{\pi}{m}, \frac{\pi}{m}]$ for $m \in \mathbb{Z}_+$
\begin{equation}
    \label{eqn:bvp}
    \begin{rcases}
    (P G')' + 4PG = g\\
    G(-\frac{\pi}{m}) = G(\frac{\pi}{m}) \\
    G'(-\frac{\pi}{m}) =  G'(\frac{\pi}{m})
\end{rcases}
\end{equation}
Here the unknown is $G:[-\frac{\pi}{m}, \frac{\pi}{m}] \to \mathbb{R}$, $G = G(\theta)$. The function $P:[-\frac{\pi}{m}, \frac{\pi}{m}] \to \mathbb{R}_+, P \in C^1([-\frac{\pi}{m}, \frac{\pi}{m}] )$ is a given even function that is monotonically decreasing on $[0, \frac{\pi}{m}]$  and the source term is $g:[-\frac{\pi}{m}, \frac{\pi}{m}] \to \mathbb{R}, g \in C^0([-\frac{\pi}{m}, \frac{\pi}{m}] )$. 

Denote 
\begin{equation*}
    \alpha =\alpha(P) := \frac{\sup P}{\inf P} = \frac{P(0)}{P(\frac{\pi}{m})},
\end{equation*}
where the `sup' and `inf' are taken over $[-\frac{\pi}{m}, \frac{\pi}{m}]$. Observe that $\alpha > 1$ for non-constant $P$.

\subsubsection{Homogeneous problem}

We begin with some key lemmas pertaining to the homogeneous problem
\begin{equation}
    \label{eqn:hom-bvp}
    \begin{rcases}
    (P G')' + 4PG = 0\\
    G(-\frac{\pi}{m}) = G(\frac{\pi}{m}) \\
    G'(-\frac{\pi}{m}) =  G'(\frac{\pi}{m})
\end{rcases}
\end{equation}

\begin{lemma}
\label{lemma:zeros}
    Suppose that $m > 4\sqrt{\alpha} $ and $(P G')' + 4PG = 0$ on $[-\frac{\pi}{m}, \frac{\pi}{m}]$. Then $G$ has at most one zero in $[-\frac{\pi}{m}, \frac{\pi}{m}]$. 
\end{lemma}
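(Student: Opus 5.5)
We prove \Cref{lemma:zeros} by a Sturm comparison in Pr\"ufer form. The aim is to show that consecutive zeros of a nontrivial solution are separated by more than $\frac{2\pi}{m}$, the length of the interval.

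First, discard the trivial solution $G\equiv 0$; the statement is understood for nontrivial solutions. For a nontrivial solution, the zeros are simple: if $G(\theta_0)=0=G'(\theta_0)$, then $G\equiv 0$ by uniqueness for the ODE, since $P\in C^1$ and $P>0$. Introduce the Pr\"ufer variables
\begin{equation*}
PG' = \sqrt{\alpha}\,r\cos\phi,\qquad G = r\sin\phi,\qquad r>0 .
\end{equation*}
Differentiating and using $(PG')'=-4PG$ gives
\begin{equation*}
\phi' = \frac{\sqrt{\alpha}}{P}\cos^2\phi + \frac{4P}{\sqrt{\alpha}}\sin^2\phi .
\end{equation*}
Zeros of $G$ are exactly the points where $\phi\in\pi\mathbb{Z}$. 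Since $\phi'>0$, $\phi$ is increasing, so between two consecutive zeros $\phi$ increases by exactly $\pi$.

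Next, bound $\phi'$ from above. Normalize $\sup P=1$ and $\inf P=1/\alpha$. Then $\frac{\sqrt{\alpha}}{P}\le \alpha^{3/2}$, which is unfortunately large. We should therefore use a scale adapted to $P$: replace $\sqrt{\alpha}$ in the Pr\"ufer substitution by a constant $c$ and obtain
\begin{equation*}
\phi'=\frac{c}{P}\cos^2\phi+\frac{4P}{c}\sin^2\phi\le \max\Bigl\{\frac{c}{\inf P},\frac{4\sup P}{c}\Bigr\}.
\end{equation*}
Choosing $c=2\sqrt{\inf P\,\sup P}$ makes both terms equal, giving
\begin{equation*}
\phi'\le 2\sqrt{\frac{\sup P}{\inf P}}=2\sqrt{\alpha}.
\end{equation*}
This optimization of $c$ is the key step; the naive choice loses a power of $\alpha$. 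With this bound, the distance between consecutive zeros is at least $\frac{\pi}{2\sqrt{\alpha}}$.

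Finally, compare with the interval length. The hypothesis $m>4\sqrt{\alpha}$ gives $\frac{2\pi}{m}<\frac{\pi}{2\sqrt{\alpha}}$. Hence two distinct zeros $\theta_1<\theta_2$ in $[-\frac{\pi}{m},\frac{\pi}{m}]$ would satisfy
\begin{equation*}
\pi\le\int_{\theta_1}^{\theta_2}\phi'\le 2\sqrt{\alpha}\,(\theta_2-\theta_1)\le 2\sqrt{\alpha}\,\frac{2\pi}{m}<\pi,
\end{equation*}
which is a contradiction. So $G$ has at most one zero. The argument uses only $P\in C^1$ and $0<\inf P\le P\le\sup P$; it does not need evenness or monotonicity of $P$.
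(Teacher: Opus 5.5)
Your proof is correct, but it follows a different route from the paper's. The paper argues variationally. If $G(a)=G(b)=0$ with $[a,b]\subset[-\frac{\pi}{m},\frac{\pi}{m}]$, multiplying the equation by $G$ and integrating by parts gives the energy identity $\int_a^b P(G')^2=4\int_a^b PG^2$. The Dirichlet Wirtinger inequality $\int_a^b G^2\le\bigl(\frac{b-a}{\pi}\bigr)^2\int_a^b (G')^2$, combined with $\sup P=\alpha\inf P$ and $b-a\le\frac{2\pi}{m}$, then makes the right-hand side strictly smaller than the left. Nontriviality enters there through $\int_a^b (G')^2>0$, just as it does in your Pr\"ufer variables through $r>0$. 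You instead run a Sturm-type comparison on a Pr\"ufer angle, and the one real choice is the balancing scale $c=2\sqrt{\inf P\,\sup P}$, which yields $0<\phi'\le 2\sqrt{\alpha}$.

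Quantitatively the two routes give the same result. The paper's chain of inequalities forces $b-a\ge\frac{\pi}{2\sqrt{\alpha}}$, which is exactly your zero-spacing bound and the source of the threshold $m>4\sqrt{\alpha}$ in both proofs. The energy argument is shorter and needs only the sharp Poincar\'e constant on an interval. Your pointwise argument carries more structure: a monotone angle, hence simple zeros and a rotation count.

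A concrete payoff of your approach is \Cref{lemma:trivial-kernel}. For a nontrivial periodic solution (with $P(-\frac{\pi}{m})=P(\frac{\pi}{m})$), the point $(PG'/c,G)$ returns to itself, so $\phi$ must increase by a positive multiple of $2\pi$ across $[-\frac{\pi}{m},\frac{\pi}{m}]$. But $\phi'\le2\sqrt{\alpha}$ caps that increase below $\pi$. By contrast, the paper's ``arguing identically'' in the periodic case tacitly needs the extra observation that such a solution vanishes somewhere, for instance from $\int PG=0$. Without Dirichlet data, Wirtinger's inequality fails for constants.
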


\begin{proof}
Suppose to the contrary that $[a, b] \subset [-\frac{\pi}{m}, \frac{\pi}{m}]$ and $G(a) = G(b) = 0$, with $G$ satisfying $(PG')' + 4PG = 0$. Then multiplying this equation by $G$ and integrating by parts, one finds 
    \begin{equation}
    \label{hom-energy-id}
        \int_a^b P(G')^2 - \int_a^b4PG^2 = 0
    \end{equation}
where the boundary terms vanish because $G(a) = G(b) = 0$. On the other hand, by Wirtinger's inequality and $b-a \le \frac{2\pi}{m}$, we have
\begin{align*}
    4\int_a^b PG^2 &\le 4 \sup P \int_a^b G^2 \\
    &\le 4(\sup P )\Big(\frac{b-a}{\pi}\Big)^2 \int_a^b (G')^2 \\
    &\le \frac{16\alpha}{m^2}  \inf P \int_a^b (G')^2 \\ 
    &<\int_a^b P(G')^2 \\
\end{align*}
which contradicts \eqref{hom-energy-id}.
\end{proof}
By arguing identically to the preceding with $[a, b] = [-\frac{\pi}{m}, \frac{\pi}{m}]$, we deduce
\begin{lemma}
\label{lemma:trivial-kernel}
If $m > 4\sqrt{\alpha} $, then the only solution to \eqref{eqn:hom-bvp} is the trivial one $G \equiv 0$.    
\end{lemma}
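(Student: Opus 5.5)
The plan is to split according to whether $G$ vanishes somewhere, using Lemma~\ref{lemma:zeros} for the case with a zero and an integration argument for the signed case. Write $L=\frac{2\pi}{m}$ for the length of the period interval.

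\textbf{Extension.} Let $G$ solve \eqref{eqn:hom-bvp}.
\begin{itemize}
\item Extend $P$ and $G$ $L$-periodically to $\mathbb{R}$.
\item Since $P$ is even, $P(-\frac{\pi}{m})=P(\frac{\pi}{m})$. In the intended setting $P$ is the restriction of an $m$-fold symmetric $C^1$ function on $\mathbb{S}^1$, so the extension of $P$ is $C^1$.
\item The boundary conditions give continuity of $G$ and $G'$ across $\pm\frac{\pi}{m}$. Hence $PG'$ is continuous and the extended $G$ solves $(PG')'+4PG=0$ on all of $\mathbb{R}$.
\end{itemize}
If one prefers to avoid the extension, the integrations by parts below can be split at the endpoint $\frac{\pi}{m}$, and the boundary terms cancel by periodicity.

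\textbf{Case 1: $G$ does not vanish.} Suppose $G$ has no zero, say $G>0$ on $[-\frac{\pi}{m},\frac{\pi}{m}]$.
\begin{itemize}
\item Integrate the equation over $[-\frac{\pi}{m},\frac{\pi}{m}]$.
\item The term $\int(PG')'=P G'\big|_{-\pi/m}^{\pi/m}$ vanishes, by the periodic boundary condition on $G'$ and the equality of $P$ at the endpoints.
\item This leaves $4\int PG=0$, which is impossible since $P>0$ and $G>0$.
\end{itemize}

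\textbf{Case 2: $G$ vanishes somewhere.} Suppose $G(a)=0$ for some $a$.
\begin{itemize}
\item By periodicity, $G(a+L)=0$ as well.
\item On $[a,a+L]$, multiply the equation by $G$ and integrate by parts. The boundary terms vanish because $G(a)=G(a+L)=0$, and this yields the identity \eqref{hom-energy-id} on $[a,a+L]$.
\item Apply the Dirichlet Wirtinger inequality $\int_a^{a+L}G^2\le (L/\pi)^2\int_a^{a+L}(G')^2=\frac{4}{m^2}\int_a^{a+L}(G')^2$.
\item This gives
\[
4\int_a^{a+L} PG^2\le \frac{16\alpha}{m^2}\inf P\int_a^{a+L}(G')^2 .
\]
Since $m>4\sqrt\alpha$, we have $\frac{16\alpha}{m^2}<1$, so this is strictly less than $\int_a^{a+L} P(G')^2$ unless $G'\equiv0$.
\item Strict inequality contradicts \eqref{hom-energy-id}. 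Hence $G'\equiv 0$, so $G$ is constant, and $G(a)=0$ forces $G\equiv0$.
\end{itemize}
This is exactly the computation in the proof of Lemma~\ref{lemma:zeros}, now on a full period that need not coincide with $[-\frac{\pi}{m},\frac{\pi}{m}]$. That is why the periodic extension is introduced.

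The only delicate point is justifying the periodic extension, or equivalently the cancellation of boundary terms at $\pm\frac{\pi}{m}$. It uses both periodicity conditions on $G$ and the equality $P(-\frac{\pi}{m})=P(\frac{\pi}{m})$ coming from evenness. Everything else is a direct repetition of Lemma~\ref{lemma:zeros}.
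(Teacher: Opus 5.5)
Your proof is correct and is essentially the paper's argument. The paper just says to repeat the proof of Lemma~\ref{lemma:zeros} on $[a,b]=[-\frac{\pi}{m},\frac{\pi}{m}]$, with periodicity and the evenness of $P$ killing the boundary term $[PGG']$.

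Your extra step is worthwhile, however. The Dirichlet Wirtinger inequality used there needs $G$ to vanish at the ends of the interval, and the periodic conditions do not provide this. Integrating the equation to get $\int PG=0$, which forces a zero of $G$, and then working on the shifted period $[a,a+\frac{2\pi}{m}]$ is exactly what makes the ``identical'' argument rigorous.
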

Next we note several useful properties of some particular solutions of the homogeneous ODE $(PG')' + 4PG = 0$ which will be used to build a Green's kernel for the boundary value problem \eqref{eqn:bvp}. 

On $[-\frac{\pi}{m}, \frac{\pi}{m}]$, let $a = a(\theta)$ to be the unique solution to the initial value problem
\begin{equation}
    \label{eqn:a-ivp}
    \begin{rcases}
    (P G')' + 4PG = 0\\
    G(0) = 1\\
    G'(0) = 0
\end{rcases},
\end{equation}
let $b = b(\theta)$ be the unique solution to 
\begin{equation}
    \label{eqn:b-ivp}
    \begin{rcases}
    (P G')' + 4PG = 0\\
    G(0) = 0\\
    G'(0) = 1
\end{rcases},
\end{equation}
and finally let $c_m = c_m(\theta)$ be the unique solution to 
\begin{equation}
    \label{eqn:cm-ivp}
    \begin{rcases}
    (P G')' + 4PG = 0\\
    G(\frac{\pi}{m}) = 0\\
    G'(\frac{\pi}{m}) = 1
\end{rcases}
\end{equation}

\begin{lemma}
\label{lemma:abd-props}
Assume that $m > 4\sqrt{\alpha} $. Then
    \begin{enumerate}[label=\normalfont(\roman*)]
        \item $a$ is an even function and $b$ is an odd function.
        \item  $a > 0$ on $[-\frac{\pi}{m}, \frac{\pi}{m}]$ and $b > 0$ on $(0, \frac{\pi}{m}]$.
        \item $a' < 0$ on $(0, \frac{\pi}{m}]$.
        \item $c_m < 0$ on  $[0, \frac{\pi}{m})$.
        \item $ c_m(\theta) = \frac{1}{\alpha}(a(\textstyle{\frac{\pi}{m}}) b(\theta) - b(\textstyle{\frac{\pi}{m}})a(\theta))$.
    \end{enumerate}
\end{lemma}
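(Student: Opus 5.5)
The plan is to prove the five items in order, using only the ODE $(PG')'+4PG=0$, the evenness of $P$ with $P'\le 0$ on $[0,\frac{\pi}{m}]$, and \Cref{lemma:zeros}.

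For (i), I will use reflection. Since $P$ is even, $\theta\mapsto a(-\theta)$ solves the same ODE with data $1,0$ at $0$, so uniqueness for the initial value problem gives that $a$ is even. Likewise $\theta\mapsto -b(-\theta)$ solves the ODE with data $0,1$ at $0$, so $b$ is odd.

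For (ii), I will combine (i) with \Cref{lemma:zeros}. If $a$ had a zero $\theta_0\in[-\frac{\pi}{m},\frac{\pi}{m}]$, then $\theta_0\neq 0$ and $-\theta_0$ would be a second zero, contradicting \Cref{lemma:zeros}. So $a$ never vanishes, and $a(0)=1$ gives $a>0$. For $b$, we have $b(0)=0$, so \Cref{lemma:zeros} rules out any further zero. Since $b'(0)=1$, $b$ is positive just to the right of $0$, hence $b>0$ on $(0,\frac{\pi}{m}]$.

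For (iii), integrate the ODE from $0$:
\[
P(\theta)a'(\theta)=-4\int_0^\theta P a<0 \quad\text{for }\theta\in(0,\tfrac{\pi}{m}],
\]
using $P>0$ and (ii).

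For (v), I will argue via the Wronskian.
\begin{itemize}
\item Writing $c_m=\lambda a+\mu b$ and imposing $c_m(\frac{\pi}{m})=0$, $c_m'(\frac{\pi}{m})=1$ gives a $2\times2$ linear system.
\item The quantity $P(ab'-a'b)$ is constant for two solutions of $(PG')'+4PG=0$. Evaluating at $0$ gives $P(0)$.
\item So $W:=a(\frac{\pi}{m})b'(\frac{\pi}{m})-a'(\frac{\pi}{m})b(\frac{\pi}{m})=P(0)/P(\frac{\pi}{m})=\alpha$.
\item Cramer's rule then gives $\lambda=-b(\frac{\pi}{m})/\alpha$ and $\mu=a(\frac{\pi}{m})/\alpha$, which is the stated formula.
\end{itemize}

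For (iv), the function $c_m$ vanishes at $\frac{\pi}{m}$, so by \Cref{lemma:zeros} it has no other zero in $[-\frac{\pi}{m},\frac{\pi}{m})$. Since $c_m'(\frac{\pi}{m})=1>0$, $c_m$ is negative just to the left of $\frac{\pi}{m}$. It therefore stays negative on $[0,\frac{\pi}{m})$, and indeed on all of $[-\frac{\pi}{m},\frac{\pi}{m})$. As a cross-check, (v) at $\theta=0$ gives $c_m(0)=-b(\frac{\pi}{m})/\alpha<0$ by (ii).

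The only delicate point is the evaluation of the Wronskian constant in (v), which is where the identity $\alpha=P(0)/P(\frac{\pi}{m})$ (from the monotonicity of $P$) enters. Everything else reduces to \Cref{lemma:zeros}, which is exactly where $m>4\sqrt{\alpha}$ is used.
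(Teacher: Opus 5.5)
Your proposal is correct and follows essentially the same route as the paper. It uses reflection plus uniqueness for the initial value problem in (i), \Cref{lemma:zeros} together with evenness and the initial data in (ii) and (iv), and the sign of $(Pa')'=-4Pa$ in (iii). For (v) it uses the conserved Wronskian $P(ab'-a'b)\equiv P(0)$ evaluated at $\frac{\pi}{m}$ (giving $\alpha$), followed by solving the $2\times 2$ system.
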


\begin{proof}
$ $\newline
    \begin{enumerate}[label=\normalfont(\roman*)]
    \item Since $P$ is even, $a(-\theta)$ solves \eqref{eqn:a-ivp} and thus by uniqueness, $a$ is even. Similarly, $-b(-\theta)$ solves \eqref{eqn:b-ivp} and so $b$ is odd by uniqueness. 
    \item That $a > 0$ on $[-\frac{\pi}{m}, \frac{\pi}{m}]$ follows at once from \Cref{lemma:zeros} and the evenness of $a$; likewise $b(0) = 0, b'(0)=1$ and \Cref{lemma:zeros} imply that $b > 0$ on $(0, \frac{\pi}{m}]$.
    \item Since $a > 0$, we have 
    \begin{equation*}
        (Pa')' = -4aP < 0.
    \end{equation*}
    Thus $Pa'$ is decreasing; hence $P(\theta)a'(\theta) < P(0)a'(0) = 0$ for $\theta > 0$ and so $a'(\theta) < 0$ as desired.
    \item That $c_m < 0$ on  $[0, \frac{\pi}{m})$ is due also to \Cref{lemma:zeros} and $c_m(\frac{\pi}{m}) = 0, c_m'(\frac{\pi}{m})=1$.
    \item Since  $a$ and $b$ are linearly independent with Wronskian $W(a,b)(\theta) = \frac{P(0)}{P(\theta)}$, $c_m(\theta) = r_1a(\theta) + r_2b(\theta)$. Solving for $r_1, r_2$ gives the conclusion. 
    \end{enumerate}
\end{proof}

We will use the following elementary variation of parameters formula.
\begin{lemma}
\label{lem:elementary-comparison}
If $u''+4u=F$ on $[0,L]$, with $u(0)=0$ and $u'(0)=1$, then
\[
u(\theta)=\frac12\sin(2\theta)+\frac12\int_0^\theta \sin(2(\theta-\zeta))F(\zeta)\,\mathrm d\zeta.
\]
In particular, if $L<\frac\pi4$ and $F\ge0$, then $u(\theta)\ge\frac12\sin(2\theta)$ on $[0,L]$.
\end{lemma}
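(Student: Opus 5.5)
The plan is to verify the formula by direct differentiation and then read off the inequality from the sign of the integrand. Set
\[
v(\theta)=\tfrac12\int_0^\theta \sin(2(\theta-\zeta))F(\zeta)\,\mathrm d\zeta .
\]

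First, differentiate $v$ under the integral sign (Leibniz rule). The boundary term vanishes because $\sin 0=0$, so
\[
v'(\theta)=\int_0^\theta \cos(2(\theta-\zeta))F(\zeta)\,\mathrm d\zeta .
\]
Differentiating once more, the boundary term is now $\cos 0\cdot F(\theta)=F(\theta)$, and
\[
v''(\theta)=F(\theta)-2\int_0^\theta \sin(2(\theta-\zeta))F(\zeta)\,\mathrm d\zeta=F(\theta)-4v(\theta).
\]
Hence $v''+4v=F$ with $v(0)=v'(0)=0$. The function $\frac12\sin(2\theta)$ solves the homogeneous equation and has value $0$ and derivative $1$ at $0$. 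The sum therefore solves the same initial value problem as $u$, and uniqueness for linear ODEs with continuous $F$ gives the stated formula. If $F$ is only integrable rather than continuous, the same computation holds in the Carathéodory sense, and uniqueness still applies.

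For the inequality, take $0\le\zeta\le\theta\le L<\frac\pi4$. Then $0\le 2(\theta-\zeta)<\frac\pi2$, so $\sin(2(\theta-\zeta))\ge0$. Together with $F\ge0$, the integrand is nonnegative, and $u(\theta)\ge\frac12\sin(2\theta)$ follows. In fact the constraint only needs $2L\le\pi$, so the hypothesis $L<\frac\pi4$ is more than enough.

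There is no real obstacle. The only point needing care is the regularity required to justify differentiating under the integral and invoking uniqueness. Continuity of $F$ covers this, and that is the setting in which the lemma will be applied later, with $F$ built from $P$ and $G$.
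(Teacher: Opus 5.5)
Your proposal is correct and follows essentially the same route as the paper, which obtains the formula by variation of parameters (your direct differentiation plus ODE uniqueness is an equivalent verification) and deduces the inequality from the nonnegativity of the sine kernel for $0\le\zeta\le\theta\le L<\frac{\pi}{4}$. Your side remark that $2L\le\pi$ would already suffice for the sign of the kernel is also correct.
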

\begin{proof}
The formula is follows by using variation of parameters; the final assertion follows because the sine kernel is nonnegative for $0\le\zeta\le\theta\le L<\frac\pi4$.
\end{proof}

The next lemma gives the pointwise estimates for $b$ and $c_m$ that will be used below.
\begin{lemma}
\label{lem:b-c-ptwise}
Suppose that $m>4\sqrt{\alpha}$. Then, for $0\le \theta\le \frac{\pi}{m}$,
\begin{enumerate}[label=\normalfont(\roman*)]
    \item
    \begin{equation}
    \label{ineq:b-ptwise-new}
        \frac12\sin(2\theta)\le b(\theta)\le \frac{\alpha}{2}\sin(2\theta).
    \end{equation}
    \item
    \begin{equation}
    \label{ineq:cm-ptwise-new}
        -\frac12\sin\left(2\left(\frac{\pi}{m}-\theta\right)\right)
        \le c_m(\theta)\le
        -\frac{1}{2\alpha}\sin\left(2\left(\frac{\pi}{m}-\theta\right)\right).
    \end{equation}
\end{enumerate}
\end{lemma}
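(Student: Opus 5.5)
The plan is to get the two lower-type bounds from a weighted Wronskian comparison with the constant-coefficient solutions, and the two upper-type bounds from a Prüfer (amplitude–phase) substitution. Throughout, $P$ is even, $P'\le0$ on $[0,\frac{\pi}{m}]$, $P(0)=\sup P$, $P(\frac{\pi}{m})=\inf P$, and $\frac{\pi}{m}<\frac{\pi}{4}$ because $m>4\sqrt\alpha\ge4$. I write the ODE as
\[
G''+4G=-\frac{P'}{P}G'.
\]

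\emph{Lower bound in (i).} Set $s(\theta)=\frac12\sin(2\theta)$ and $W:=P(b's-bs')$. Using $(Pb')'=-4Pb$ and $s''=-4s$, a direct computation gives $W'=-P's'b$. On $(0,\frac{\pi}{m}]$ we have $-P'\ge0$, $s'=\cos 2\theta>0$ and $b>0$ by \Cref{lemma:abd-props}(ii). Hence $W'\ge0$, and since $W(0)=0$ we get $W\ge0$. Therefore $(b/s)'=W/(Ps^2)\ge0$. As $b/s\to b'(0)/s'(0)=1$ when $\theta\to0^+$, this gives $b\ge s$.

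\emph{Upper bound in (ii).} Set $\sigma=\frac{\pi}{m}-\theta$, $\tilde s(\theta)=\frac12\sin(2\sigma)$ and $\tilde W=P(c_m'\tilde s-c_m\tilde s')$. The same computation gives $\tilde W'=-P'\tilde s'c_m$. Here $\tilde s'<0$ and $c_m<0$ by \Cref{lemma:abd-props}(iv), so $\tilde W'\ge0$. Since $\tilde W(\frac{\pi}{m})=0$, we get $\tilde W\le0$ on $[0,\frac{\pi}{m}]$. Thus $(c_m/\tilde s)'\le0$, with $c_m/\tilde s\to-1$ as $\theta\to\frac{\pi}{m}$. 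This gives $c_m/\tilde s\ge -1$, i.e. $c_m\ge-\tilde s$, which is exactly the left inequality of \eqref{ineq:cm-ptwise-new}.

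\emph{Upper bound in (i).} I use Prüfer variables adapted to $u''+4u$: write $b=r\sin\phi$, $b'=2r\cos\phi$, with $r(0)=\frac12$ and $\phi(0)=0$. Substituting into the ODE yields
\[
\frac{r'}{r}=-\frac{P'}{P}\cos^2\phi,\qquad \phi'=2+\frac{P'}{P}\sin\phi\cos\phi .
\]
Since $0\le -\frac{P'}{P}\cos^2\phi\le-\frac{P'}{P}$, integrating gives $\frac12\le r(\theta)\le\frac12\frac{P(0)}{P(\theta)}\le\frac{\alpha}{2}$. For the phase, as long as $\phi\in[0,\frac\pi2]$ we have $\phi'\le2$, so $\phi\le2\theta<\frac\pi2$; by continuity $\phi$ never leaves this range, and $\phi\ge0$ because $b>0$. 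Hence $\sin\phi\le\sin 2\theta$, and $b\le\frac{\alpha}{2}\sin 2\theta$.

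\emph{Lower bound on $|c_m|$ in (ii).} By symmetry, I apply the same Prüfer argument to $c_m$ in the variable $\sigma$, from $\sigma=0$. In this variable the coefficient $\tilde P(\sigma)=P(\frac{\pi}{m}-\sigma)$ is increasing. Now $r$ decreases from $\frac12$, and the amplitude equation gives $r\ge\frac12\frac{P(\frac{\pi}{m})}{P}\ge\frac1{2\alpha}$. For the phase (normalised so that $\sin\phi>0$, with $c_m=-r\sin\phi$), we get $\phi'\ge2$ while $\phi\in[0,\frac\pi2]$. This yields $\phi\ge2\sigma$ as long as $\phi\le\frac\pi2$, and so $\sin\phi\ge\sin 2\sigma$ in that range.

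This is the step I expect to be the main obstacle. The phase must not overshoot past $\pi-2\sigma$, because only then does $\sin\phi\ge\sin2\sigma$ continue to hold. My first attempt is to combine two facts: $c_m<0$ on $[0,\frac{\pi}{m})$, which keeps $\phi\in(0,\pi)$, and the already proven bound $|c_m|\le\frac12\sin 2\sigma$. Together with $r\ge\frac{1}{2\alpha}$, these should keep $\phi$ bounded away from $\pi$ relative to $2\sigma$.

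If that does not close, I will instead use \Cref{lemma:abd-props}(v) to express $c_m$ through $a$ and $b$, and derive the bound from the estimates for $b$ together with a Prüfer estimate for $a$. Alternatively, I will run a Wronskian comparison of $c_m$ against $\frac{1}{\alpha}\tilde s$ with the weight $P(0)$ in place of $P$.
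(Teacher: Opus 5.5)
Your arguments for $b\ge\frac12\sin(2\theta)$ and for the left inequality of (ii) (weighted Wronskian comparison with $\frac12\sin(2\theta)$ and $\tilde s$) are correct. So is your Pr\"ufer argument for $b\le\frac\alpha2\sin(2\theta)$. These take a different route from the paper, which writes $b$ and $d(x)=-c_m(\frac\pi m-x)$ by variation of constants against $\sin(2\,\cdot)$ and inserts the derivative bounds $0<b'\le P(0)/P$ and $0<c_m'\le P(\frac\pi m)/P$.

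However, the right inequality of (ii), $c_m\le-\frac1{2\alpha}\sin(2(\frac\pi m-\theta))$, is not proved. The phase step you flag is a genuine gap, and your first fix cannot close it. From $r\sin\phi=|c_m|\le\frac12\sin(2\sigma)$ and $r\ge\frac1{2\alpha}$ you only get $\sin\phi\le\alpha\sin(2\sigma)$, an \emph{upper} bound on $\sin\phi$. The bad scenario $\phi\in(\pi-2\sigma,\pi)$ has $\sin\phi<\sin(2\sigma)$ and is fully consistent with both facts and with $c_m<0$. Integrating the drift does not help either: it only gives $\phi\le2\sigma+\frac12\log\alpha$, which fails to keep $\phi$ below $\frac\pi2$ once $\alpha$ is large. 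Your Wronskian fallback has the same defect. The Wronskian of $c_m$ and $\frac1\alpha\tilde s$ with constant weight $P(0)$ has derivative $-\frac{P(0)}{\alpha}\frac{P'}{P}\tilde s\,c_m'$, so its sign is the sign of $c_m'$. Even when $c_m'>0$, it only shows that $c_m/\tilde s$ is nonincreasing, which is the inequality $c_m\ge-\tilde s$ you already have. The factor $\frac1\alpha$ must come from a quantitative estimate.

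The missing ingredient is the sign $c_m'>0$ on $[0,\frac\pi m]$, which for your phase is exactly $\cos\phi>0$. The paper proves it in the form $0<c_m'\le P(\frac\pi m)/P$, and it is short. By \Cref{lemma:abd-props}(v), $c_m'(0)=a(\frac\pi m)/\alpha>0$. Since $(Pc_m')'=-4Pc_m>0$ on $[0,\frac\pi m)$, $Pc_m'$ is increasing, so $c_m'>0$ throughout. Alternatively, $c_m'=\frac1\alpha\bigl(a(\frac\pi m)b'-b(\frac\pi m)a'\bigr)$ with $a'\le0$ and $b'=2r\cos\phi_b>0$, by your own phase bound $0\le\phi_b\le2\theta<\frac\pi2$.

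With this sign, $\phi\in(0,\frac\pi2)$ for all $\sigma\in(0,\frac\pi m]$. Hence $\phi'\ge2$ on the whole interval and $\phi\ge2\sigma$, so $\sin\phi\ge\sin(2\sigma)$ because $2\sigma<\frac\pi2$. Combined with $r\ge\frac{P(\pi/m)}{2P(\pi/m-\sigma)}\ge\frac1{2\alpha}$, this gives $-c_m(\frac\pi m-\sigma)\ge\frac1{2\alpha}\sin(2\sigma)$.

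Note that (v) is useful here only for this sign information. Bounding $c_m=\frac1\alpha\bigl(a(\frac\pi m)b-b(\frac\pi m)a\bigr)$ pointwise from separate bounds on $a$ and $b$ runs into cancellation near $\theta=\frac\pi m$, where both terms tend to $a(\frac\pi m)b(\frac\pi m)$.
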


\begin{proof}
Set $L=\frac{\pi}{m}$. Since $m>4\sqrt\alpha$, we have $L<\frac{\pi}{4}$. We first record the derivative bounds needed for the comparison. Let
\[
q(\theta):=P(\theta)b'(\theta).
\]
Then $q'=-4Pb$ and
\[
q''+4q=-4P'b\ge0,
\]
since $P'\le0$ and $b>0$ on $(0,L]$ by \Cref{lemma:abd-props}. As $q(0)=P(0)$ and $q'(0)=0$, variation of constants gives
\[
q(\theta)-P(0)\cos(2\theta)
=\frac12\int_0^\theta \sin(2(\theta-\zeta))\big(-4P'(\zeta)b(\zeta)\big)\,\mathrm d\zeta\ge0.
\]
Hence $q>0$ on $[0,L]$, while $q'=-4Pb<0$ gives $q(\theta)\le P(0)$. Therefore
\begin{equation}
\label{ineq:bprime-in-proof}
0<b'(\theta)\le \frac{P(0)}{P(\theta)}\le\alpha.
\end{equation}
Using the representation of $c_m$ in \Cref{lemma:abd-props}, together with $a(L)>0$, $a'<0$, $b(L)>0$ and $b'>0$, we obtain $c_m'>0$. Since $(Pc_m')'=-4Pc_m>0$ and $c_m'(L)=1$,
\begin{equation}
\label{ineq:cmprime-in-proof}
0<c_m'(\theta)\le \frac{P(L)}{P(\theta)}\le1.
\end{equation}

We now compare with the constant-coefficient equation. Dividing the equation for $b$ by $P$,
\[
b''+4b=-\frac{P'}P b'\ge0.
\]
Thus
\begin{equation}
\label{eqn:b-voc}
b(\theta)=\frac12\sin(2\theta)
+\frac12\int_0^\theta \sin(2(\theta-\zeta))
\left(-\frac{P'(\zeta)}{P(\zeta)}b'(\zeta)\right)\,\mathrm d\zeta.
\end{equation}
The integral is nonnegative, which gives the lower bound in \eqref{ineq:b-ptwise-new}. For the upper bound, \eqref{ineq:bprime-in-proof} gives
\[
-\frac{P'}P b'\le \frac{\mathrm d}{\mathrm d\theta}\left(\frac{P(0)}P\right).
\]
Moreover, for $0\le\zeta\le\theta\le L<\frac\pi4$,
$0\le\sin(2(\theta-\zeta))\le\sin(2\theta)$. Hence \eqref{eqn:b-voc} yields
\[
b(\theta)\le \frac12\sin(2\theta)\frac{P(0)}{P(\theta)}
\le \frac\alpha2\sin(2\theta).
\]

For $c_m$, define
\[
d(x):=-c_m(L-x),\qquad Q(x):=P(L-x),\qquad 0\le x\le L.
\]
Then $d(0)=0$, $d'(0)=1$, $d,d'>0$, $Q'\ge0$, and
\[
d''+4d=-\frac{Q'}Q d'.
\]
Therefore
\begin{equation}
\label{eqn:d-voc}
d(x)=\frac12\sin(2x)-\frac12\int_0^x
\sin(2(x-\zeta))\frac{Q'(\zeta)}{Q(\zeta)}d'(\zeta)\,\mathrm d\zeta,
\end{equation}
which immediately gives $d(x)\le\frac12\sin(2x)$. From \eqref{ineq:cmprime-in-proof},
\[
d'(\zeta)\le \frac{Q(0)}{Q(\zeta)}.
\]
Using again $\sin(2(x-\zeta))\le\sin(2x)$ in \eqref{eqn:d-voc},
\[
\begin{aligned}
d(x)
&\ge \frac12\sin(2x)\left(1-\int_0^x
\frac{Q'(\zeta)Q(0)}{Q(\zeta)^2}\,\mathrm d\zeta\right)\\
&=\frac{Q(0)}{2Q(x)}\sin(2x)
\ge \frac1{2\alpha}\sin(2x).
\end{aligned}
\]
Substituting $x=L-\theta$ proves \eqref{ineq:cm-ptwise-new}.
\end{proof}

\begin{lemma}
\label{lem:b-c-prime}
For $m>4\sqrt\alpha$,
\begin{enumerate}[label=\normalfont(\roman*)]
    \item $0<b'(\theta)\le \frac{P(0)}{P(\theta)}\le\alpha$ on $[0,\frac{\pi}{m}]$;
    \item $0<c_m'(\theta)\le \frac{P(\frac{\pi}{m})}{P(\theta)}\le1$ on $[0,\frac{\pi}{m}]$.
\end{enumerate}
\end{lemma}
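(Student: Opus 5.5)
This lemma simply isolates the derivative bounds \eqref{ineq:bprime-in-proof} and \eqref{ineq:cmprime-in-proof} that were already obtained inside the proof of \Cref{lem:b-c-ptwise}. The plan is to reproduce that argument in a self-contained way, treating $b$ and $c_m$ separately. Throughout we use the standing assumptions of \Cref{sec:bvp}: $P>0$ is even and $P'\le 0$ on $[0,\frac{\pi}{m}]$. We also use the sign information from \Cref{lemma:abd-props}, which is where the hypothesis $m>4\sqrt\alpha$ enters (through \Cref{lemma:zeros}).

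For (i), set $q:=Pb'$. The ODE gives $q'=-4Pb$, and $b>0$ on $(0,\frac{\pi}{m}]$, so $q$ is nonincreasing on $[0,\frac{\pi}{m}]$. Since $q(0)=P(0)b'(0)=P(0)$, we get $q(\theta)\le P(0)$.

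For positivity of $q$, differentiate once more: $q''=-4P'b-4Pb'=-4P'b-4q$. Hence $q''+4q=-4P'b\ge0$, with $q(0)=P(0)$ and $q'(0)=0$. Variation of constants, exactly as in \Cref{lem:elementary-comparison}, gives
\[
q(\theta)=P(0)\cos(2\theta)+\tfrac12\int_0^\theta\sin(2(\theta-\zeta))\bigl(-4P'b\bigr)\,\mathrm d\zeta\ge P(0)\cos(2\theta).
\]
Since $\frac{\pi}{m}<\frac{\pi}{4}$, the sine kernel is nonnegative and $\cos(2\theta)>0$ on the whole interval, so $q>0$. Dividing by $P(\theta)$ yields $0<b'(\theta)\le P(0)/P(\theta)\le\alpha$.

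For (ii), first show $c_m'>0$ using the representation $c_m=\frac1\alpha\bigl(a(\tfrac{\pi}{m})b-b(\tfrac{\pi}{m})a\bigr)$ from \Cref{lemma:abd-props}(v). Differentiating,
\[
c_m'=\tfrac1\alpha\bigl(a(\tfrac{\pi}{m})b'-b(\tfrac{\pi}{m})a'\bigr).
\]
Here $a(\frac{\pi}{m})>0$ and $b(\frac{\pi}{m})>0$ by \Cref{lemma:abd-props}(ii), $b'>0$ by part (i), and $a'\le0$ on $[0,\frac{\pi}{m}]$ by \Cref{lemma:abd-props}(iii), where $a'(0)=0$. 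Both terms are therefore nonnegative and the first is strictly positive, so $c_m'>0$.

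For the upper bound, note $(Pc_m')'=-4Pc_m\ge0$ on $[0,\frac{\pi}{m}]$, because $c_m<0$ on $[0,\frac{\pi}{m})$ by \Cref{lemma:abd-props}(iv). So $Pc_m'$ is nondecreasing, and $P(\theta)c_m'(\theta)\le P(\frac{\pi}{m})c_m'(\frac{\pi}{m})=P(\frac{\pi}{m})$. Dividing by $P(\theta)$ and using $P(\frac{\pi}{m})=\inf P$ gives the bound $\le1$.

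The only step that is not a one-line monotonicity argument is the strict positivity $q>0$ in (i). It relies on the comparison with $\cos(2\theta)$, which in turn needs $\frac{\pi}{m}<\frac{\pi}{4}$ so that both the kernel and $\cos(2\theta)$ stay positive. This holds because $m>4\sqrt\alpha\ge4$.
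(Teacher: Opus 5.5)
Your proof is correct and follows the paper's own argument, which appears inside the proof of \Cref{lem:b-c-ptwise}: positivity of $q=Pb'$ by variation of constants against $P(0)\cos(2\theta)$ using $\frac{\pi}{m}<\frac{\pi}{4}$, the bound $q\le P(0)$ from $q'=-4Pb\le 0$, $c_m'>0$ from the representation in \Cref{lemma:abd-props}(v), and $Pc_m'\le P(\frac{\pi}{m})$ from monotonicity of $Pc_m'$. Your small refinements (using $a'\le 0$ with $a'(0)=0$, and $(Pc_m')'\ge 0$ rather than $>0$ at the endpoint) are slightly more careful than the paper's wording but change nothing.
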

\begin{proof}
These are precisely \eqref{ineq:bprime-in-proof} and \eqref{ineq:cmprime-in-proof} from the proof of \Cref{lem:b-c-ptwise}.
\end{proof}

\subsubsection{Green's function construction}
By \Cref{lemma:trivial-kernel}, the homogeneous periodic boundary value problem associated to \eqref{eqn:bvp} has trivial kernel. Thus, for every $g\in C^0([ -\frac{\pi}{m},\frac{\pi}{m}])$, \eqref{eqn:bvp} has a unique solution. 

We will only need an explicit Green's function when $g$ is odd. In this case the evenness of $P$ and uniqueness imply that $G$ is odd. Indeed, if $G$ solves \eqref{eqn:bvp}, define
\begin{equation*}
    \widetilde G(\theta):=-G(-\theta).
\end{equation*}
Since $P$ is even and $g$ is odd, $\widetilde G$ satisfies the same differential equation and the same periodic boundary conditions as $G$. Hence $\widetilde G=G$ by uniqueness. Consequently $G(0)=0$, and periodicity together with oddness gives
\begin{equation*}
    G\left(\frac{\pi}{m}\right)=G\left(-\frac{\pi}{m}\right)=-G\left(\frac{\pi}{m}\right),
\end{equation*}
so that $G(\frac{\pi}{m})=0$. Therefore, on $[0,\frac{\pi}{m}]$, $G$ is the unique solution of
\begin{equation}
    \label{eqn:reduced-bvp}
    \begin{rcases}
    (P G')' + 4PG = g\\
    G(0) = 0 \\
    G(\frac{\pi}{m}) = 0.
\end{rcases}
\end{equation}
The homogeneous problem associated to \eqref{eqn:reduced-bvp} has trivial kernel by \Cref{lemma:zeros}, so the reduced boundary value problem has a unique solution. We now construct its Green's kernel.
\begin{lemma}
   Let $m > 4\sqrt{\alpha} $.  The function 
    \begin{equation}
    \label{eqn:greens:reduced}
    H_m(\theta, \zeta) = -\frac{1}{P(0)c_m(0)}\begin{cases}
        b(\theta) c_m(\zeta), \quad 0\le \theta < \zeta\\
        c_m(\theta)b(\zeta), \quad \zeta < \theta \le \frac{\pi}{m}
    \end{cases}
\end{equation}
is a Green's function for \eqref{eqn:reduced-bvp} and furthermore $H_m(\theta, \zeta) < 0$ for all $\theta \in (0, \frac{\pi}{m})$.
\end{lemma}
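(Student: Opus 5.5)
We verify the Green's function by the standard variation-of-parameters construction for a Sturm--Liouville operator, and then read off the sign from \Cref{lemma:abd-props} and \Cref{lem:b-c-ptwise}. The function $b$ solves the homogeneous equation with $b(0)=0$, and $c_m$ solves it with $c_m(\frac{\pi}{m})=0$. The classical recipe for the Green's function of $(PG')'+4PG$ with Dirichlet conditions is
\[
H(\theta,\zeta)=\frac{u_1(\theta_<)\,u_2(\theta_>)}{P\,W(u_1,u_2)},
\]
with $u_1=b$ and $u_2=c_m$. The key point is that $P(\theta)W(b,c_m)(\theta)$ is constant (Abel's identity, since $(P(bc_m'-b'c_m))'=b(Pc_m')'-c_m(Pb')'=0$). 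Evaluating at $\theta=0$ gives $P(0)(b(0)c_m'(0)-b'(0)c_m(0))=-P(0)c_m(0)$. This explains the normalizing factor $-\frac{1}{P(0)c_m(0)}$; we should double-check the sign convention so that the jump condition has the correct sign.

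Concretely, we define $G(\theta)=\int_0^{\pi/m}H_m(\theta,\zeta)g(\zeta)\,\mathrm d\zeta$, that is,
\[
G(\theta)=-\frac{1}{P(0)c_m(0)}\Big(c_m(\theta)\int_0^\theta b g+b(\theta)\int_\theta^{\pi/m}c_m g\Big).
\]
The boundary conditions are immediate: at $\theta=0$ the first integral vanishes and $b(0)=0$, while at $\theta=\frac{\pi}{m}$ the second integral vanishes and $c_m(\frac{\pi}{m})=0$. Differentiating, the terms arising from the moving limits cancel, so $PG'=-\frac{1}{P(0)c_m(0)}\big(Pc_m'\int_0^\theta bg+Pb'\int_\theta^{\pi/m}c_mg\big)$. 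Differentiating once more, the homogeneous equations for $b$ and $c_m$ kill the terms involving $(Pc_m')'$ and $(Pb')'$, up to the $-4P(\cdot)$ contributions, and what remains is
\[
(PG')'+4PG=-\frac{1}{P(0)c_m(0)}\,P\big(c_m'b-b'c_m\big)(\theta)\,g(\theta)=-\frac{1}{P(0)c_m(0)}\big(-P(0)c_m(0)\big)\,g(\theta)=g(\theta),
\]
using the constancy of $PW$ computed above. Uniqueness for \eqref{eqn:reduced-bvp}, which holds by \Cref{lemma:zeros}, then identifies $G$ as the solution. The main bookkeeping risk is the sign in this last identity; since $W(b,c_m)=bc_m'-b'c_m$, the signs are consistent.

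For negativity, fix $\theta\in(0,\frac{\pi}{m})$. By \Cref{lemma:abd-props}(iv), $c_m(0)<0$, so $-\frac{1}{P(0)c_m(0)}>0$. For $\zeta\in(0,\frac{\pi}{m})$, each product $b(\cdot)c_m(\cdot)$ with arguments in $(0,\frac{\pi}{m})$ is negative: $b>0$ on $(0,\frac{\pi}{m}]$ by (ii), and $c_m<0$ on $[0,\frac{\pi}{m})$ by (iv). Hence $H_m(\theta,\zeta)<0$ for all $\zeta\in(0,\frac{\pi}{m})$. At the endpoints $\zeta=0$ or $\zeta=\frac{\pi}{m}$ the kernel vanishes, so the strict inequality is to be understood for interior $\zeta$, or as $H_m\le 0$ with strict negativity in the interior. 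The pointwise bounds of \Cref{lem:b-c-ptwise} give the same conclusion quantitatively, but they are not needed here. Overall there is no real obstacle; the only care required is the sign and normalization of the Wronskian constant.
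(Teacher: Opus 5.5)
Your proposal is correct and follows essentially the same route as the paper: the two-solution (variation of parameters) Green's function built from $b$ and $c_m$, normalized by the Abel constant $P\,W(b,c_m)\equiv -P(0)c_m(0)$, with the sign read off from $b>0$ and $c_m<0$ in \Cref{lemma:abd-props}. Your explicit check that $(PG')'+4PG=g$ is worth keeping, because it fixes the sign of the prefactor. The paper's intermediate prefactor $-1/(P(\zeta)W(b,c_m)(\zeta))$, combined with its own value $W(b,c_m)(\zeta)=-c_m(0)P(0)/P(\zeta)$, would give the opposite sign, whereas the stated formula and yours are right; your remark that strict negativity holds only for interior $\zeta$ (the kernel vanishes at $\zeta=0$ and $\zeta=\frac{\pi}{m}$) is also accurate.
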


\begin{proof}
    Recall that $b(0) = c_m(\frac{\pi}{m})= 0$. A Green's function for \eqref{eqn:reduced-bvp} is given by
        \begin{equation*}
    H_m(\theta, \zeta) = -\frac{1}{P(\zeta)W(b, c_m)(\zeta)}\begin{cases}
        b(\theta) c_m(\zeta), \quad 0\le \theta < \zeta\\
        c_m(\theta)b(\zeta), \quad \zeta < \theta \le \frac{\pi}{m}
    \end{cases}
\end{equation*}
where $W(b, c_m)(\zeta)$ denotes the Wronskian of $b$ and $c_m$. In fact by Abel's formula,
\begin{align*}
    W(b, c_m)(\zeta) = W(b, c_m)(0) \exp\left(-\int_0^\zeta\frac{P'}{P}  \ \mathrm{d}\theta\right) = -c_m(0)\frac{P(0)}{P(\zeta)},
\end{align*}
which gives \eqref{eqn:greens:reduced}. To see that $H_m(\theta, \zeta) < 0$, note by \Cref{lemma:abd-props} that $c_m < 0$  and $b > 0$ on $(0, \frac{\pi}{m})$. 
\end{proof}

We summarize our conclusions in the following proposition.

\begin{proposition}
\label{prop:G-greens-formula}
    Let $m > 4\sqrt{\alpha}$. If $g$ is odd, then the unique solution $G$ to \eqref{eqn:bvp} is odd and, on $[0,\frac{\pi}{m}]$, is given by
    \begin{equation*}
        G(\theta) = \int_{0}^{\frac{\pi}{m}} H_m(\theta, \zeta)g(\zeta) \, \mathrm{d}\zeta.
    \end{equation*}
\end{proposition}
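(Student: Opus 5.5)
The proposition collects three facts already in hand, and the plan is to chain them. First, for $g$ odd (hence continuous on the circle, or in $C^0([-\frac{\pi}{m},\frac{\pi}{m}])$ with matching endpoint values), existence and uniqueness of a solution $G$ of the periodic problem \eqref{eqn:bvp} follow from \Cref{lemma:trivial-kernel} and the Fredholm alternative for the regular Sturm--Liouville operator $G\mapsto (PG')'+4PG$ with periodic boundary conditions. This operator is self-adjoint and has trivial kernel when $m>4\sqrt{\alpha}$, so it is invertible. Next, oddness follows by the reflection argument recorded above. Since $P$ is even and $g$ is odd, $\widetilde G(\theta)=-G(-\theta)$ satisfies $(P\widetilde G')'+4P\widetilde G=g$ and the same periodic conditions, so $\widetilde G=G$ by uniqueness. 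Consequently $G(0)=0$, and periodicity gives $G(\frac{\pi}{m})=-G(\frac{\pi}{m})=0$. Hence the restriction of $G$ to $[0,\frac{\pi}{m}]$ solves the Dirichlet problem \eqref{eqn:reduced-bvp}.

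Then I will verify directly that $\tilde G(\theta):=\int_0^{\pi/m}H_m(\theta,\zeta)g(\zeta)\,\mathrm d\zeta$ solves \eqref{eqn:reduced-bvp}. Writing $\kappa=-1/(P(0)c_m(0))$, we have
\[
\tilde G(\theta)=\kappa\Big(c_m(\theta)\int_0^\theta b g+b(\theta)\int_\theta^{\pi/m}c_m g\Big).
\]
This gives $\tilde G(0)=0$ because $b(0)=0$, and $\tilde G(\frac{\pi}{m})=0$ because $c_m(\frac{\pi}{m})=0$. Differentiating, the boundary terms cancel, so
\[
P\tilde G'=\kappa P\Big(c_m'\int_0^\theta bg+b'\int_\theta^{\pi/m}c_mg\Big).
\]
Differentiating once more and using $(Pb')'=-4Pb$ and $(Pc_m')'=-4Pc_m$ gives
\[
(P\tilde G')'+4P\tilde G=\kappa P(\theta)\big(c_m'b-b'c_m\big)(\theta)\,g(\theta)=-\kappa P(\theta)W(b,c_m)(\theta)\,g(\theta).
\]
Abel's formula, computed in the preceding lemma, gives $P(\theta)W(b,c_m)(\theta)=-c_m(0)P(0)$, so the coefficient equals $1$. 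Here $c_m(0)\neq0$ by \Cref{lemma:abd-props}(iv), so $\kappa$ is well defined.

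Finally, the Dirichlet problem \eqref{eqn:reduced-bvp} has a unique solution. Indeed, the difference of two solutions solves the homogeneous equation and vanishes at both $0$ and $\frac{\pi}{m}$, so by \Cref{lemma:zeros} it has at most one zero unless it is identically zero; having two zeros, it must vanish identically. Therefore $G=\tilde G$ on $[0,\frac{\pi}{m}]$, and by oddness $G$ is determined on the whole interval. The only point requiring care is the sign and normalization of the Wronskian, which is exactly the Abel-formula computation already carried out in the preceding lemma. No further obstacle is expected.
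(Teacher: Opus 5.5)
Your proof is correct and is essentially the paper's own argument: trivial periodic kernel from \Cref{lemma:trivial-kernel}, the reflection $\widetilde G(\theta)=-G(-\theta)$ giving oddness and $G(0)=G(\frac{\pi}{m})=0$, uniqueness for \eqref{eqn:reduced-bvp} via \Cref{lemma:zeros}, and the $b$--$c_m$ kernel normalized by Abel's formula, with the kernel verification that the paper treats as standard written out. There is one harmless sign slip: with the paper's convention $W(b,c_m)=bc_m'-b'c_m$ you have $c_m'b-b'c_m=+W(b,c_m)$, so the coefficient is $\kappa P W(b,c_m)=\kappa\cdot(-c_m(0)P(0))=1$, whereas the minus sign you wrote would give $-1$.
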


\subsubsection{Green's function bounds}
\label{sec:greensbounds}
We now prove some useful bounds on the kernel $H_m$ constructed above. Our bounds depend only on the quantities $P(0), P(\frac{\pi}{m})$ and $\alpha(P)$, which are preserved if $P = P(t, \cdot)$ solves a transport equation. We often employ without declaration the simple inequalities $\sin\theta \le \theta$ and $\sin\theta \ge \frac{2}{\pi}\theta$ in $[0, \frac{\pi}{2}]$, and $|\sin\theta| \le 1$. 

\begin{proposition}
\label{prop:greens:ptwise}
Assume that $m>4\sqrt\alpha$ and set $L=\frac{\pi}{m}$. Then, for $0\le \theta,\zeta\le L$,
\begin{equation}
\label{ineq:Hm-abs-bounds}
\frac{1}{2\alpha P(0)\sin(2L)}\,\mathcal S(\theta,\zeta)
\le -H_m(\theta,\zeta)
\le \frac{\alpha^2}{2P(0)\sin(2L)}\,\mathcal S(\theta,\zeta),
\end{equation}
where
\[
\mathcal S(\theta,\zeta):=
\begin{cases}
\sin(2\theta)\sin(2(L-\zeta)),&0\le\theta<\zeta,\\
\sin(2\zeta)\sin(2(L-\theta)),&0\le\zeta<\theta.
\end{cases}
\]
\end{proposition}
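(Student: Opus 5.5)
We will read the bounds off the explicit formula \eqref{eqn:greens:reduced},
\[
-H_m(\theta,\zeta)=\frac{1}{P(0)\,|c_m(0)|}\,b(\min\{\theta,\zeta\})\,|c_m(\max\{\theta,\zeta\})|,
\]
which is valid since $c_m<0$ on $[0,L)$ by \Cref{lemma:abd-props}. Everything then reduces to the two-sided pointwise bounds of \Cref{lem:b-c-ptwise}. By the symmetry of both $H_m$ and $\mathcal S$ under the exchange of the roles of $\theta$ and $\zeta$, we only need to treat the case $0\le\theta<\zeta\le L$. In that case the numerator is $b(\theta)|c_m(\zeta)|$ and the target is $\sin(2\theta)\sin(2(L-\zeta))$.

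\emph{Step 1 (numerator).} By \eqref{ineq:b-ptwise-new},
\[
\tfrac12\sin(2\theta)\le b(\theta)\le\tfrac{\alpha}{2}\sin(2\theta).
\]
By \eqref{ineq:cm-ptwise-new},
\[
\tfrac{1}{2\alpha}\sin(2(L-\zeta))\le|c_m(\zeta)|\le\tfrac12\sin(2(L-\zeta)).
\]
Multiplying these gives
\[
\tfrac{1}{4\alpha}\,\mathcal S\le b(\theta)|c_m(\zeta)|\le\tfrac{\alpha}{4}\,\mathcal S.
\]

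\emph{Step 2 (normalizing constant).} Evaluating \eqref{ineq:cm-ptwise-new} at $\theta=0$ gives
\[
\tfrac{1}{2\alpha}\sin(2L)\le|c_m(0)|\le\tfrac12\sin(2L).
\]
Hence
\[
\frac{2}{\sin(2L)}\le\frac{1}{|c_m(0)|}\le\frac{2\alpha}{\sin(2L)}.
\]

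\emph{Step 3 (combine).} Multiplying the bounds of Steps 1 and 2 yields
\[
\frac{1}{2\alpha P(0)\sin(2L)}\,\mathcal S\le -H_m\le\frac{\alpha^2}{2P(0)\sin(2L)}\,\mathcal S,
\]
which is exactly \eqref{ineq:Hm-abs-bounds}. The diagonal $\theta=\zeta$ follows by continuity of $H_m$. Note that $\sin(2L)>0$ because $L<\pi/4$, which holds since $m>4\sqrt\alpha\ge4$.

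The only real content is \Cref{lem:b-c-ptwise}, which is already established, so no step is a serious obstacle. The point to check with care is that the constants multiply exactly to the stated $\frac{1}{2\alpha}$ and $\frac{\alpha^2}{2}$: the lower bound uses the worst case $\frac{1}{4\alpha}\cdot 2$, and the upper bound uses $\frac{\alpha}{4}\cdot 2\alpha$. One should also check the sign conventions for $c_m$ in \eqref{eqn:greens:reduced}.
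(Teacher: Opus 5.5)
Your proof is correct and is essentially the paper's argument: insert the two-sided bounds of \Cref{lem:b-c-ptwise} into \eqref{eqn:greens:reduced}, and the constants multiply out exactly as you checked. The only cosmetic difference is how you bound $|c_m(0)|$. You evaluate \eqref{ineq:cm-ptwise-new} at $\theta=0$, whereas the paper uses the identity $|c_m(0)|=\alpha^{-1}b(L)$ from \Cref{lemma:abd-props} together with \eqref{ineq:b-ptwise-new}; both routes give the same interval $\bigl[\frac{1}{2\alpha}\sin(2L),\frac12\sin(2L)\bigr]$.
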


\begin{proof}
By \Cref{lemma:abd-props}, $|c_m(0)|=\alpha^{-1}b(L)$. Hence \Cref{lem:b-c-ptwise} gives
\[
\frac{1}{2\alpha}\sin(2L)\le |c_m(0)|\le \frac12\sin(2L).
\]
Inserting these bounds, together with \eqref{ineq:b-ptwise-new}--\eqref{ineq:cm-ptwise-new}, into \eqref{eqn:greens:reduced} gives \eqref{ineq:Hm-abs-bounds}.
\end{proof}

\begin{corollary}
\label{cor:greens-Linf}
For $m>4\sqrt\alpha$,
\[
\|H_m\|_{L^\infty}\le \frac{\alpha^2}{4P(0)}\tan\left(\frac{\pi}{m}\right)
\le \frac{\alpha^2}{4P(0)}.
\]
\end{corollary}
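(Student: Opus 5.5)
The bound follows directly from the upper estimate in \Cref{prop:greens:ptwise}. That estimate gives
\[
|H_m(\theta,\zeta)|\le \frac{\alpha^2}{2P(0)\sin(2L)}\,\mathcal S(\theta,\zeta),
\]
so it suffices to show that $\sup_{0\le\theta,\zeta\le L}\mathcal S(\theta,\zeta)\le \sin^2 L$. This is enough because
\[
\frac{\sin^2 L}{\sin(2L)}=\frac{\sin^2 L}{2\sin L\cos L}=\frac{\tan L}{2},
\]
and then $\|H_m\|_{L^\infty}\le \frac{\alpha^2}{4P(0)}\tan L$.

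To bound $\mathcal S$, take the case $\theta<\zeta$; the other case is symmetric under $\theta\leftrightarrow\zeta$. Since $0\le 2\theta<2\zeta\le 2L<\frac\pi2$ and sine is increasing on $[0,\frac\pi2]$, we have
\[
\mathcal S(\theta,\zeta)=\sin(2\theta)\sin(2(L-\zeta))\le \sin(2\zeta)\sin(2(L-\zeta)).
\]
The product-to-sum identity gives
\[
\sin(2\zeta)\sin(2(L-\zeta))=\tfrac12\bigl(\cos(4\zeta-2L)-\cos(2L)\bigr)\le \tfrac12\bigl(1-\cos 2L\bigr)=\sin^2 L,
\]
with equality at $\zeta=L/2$. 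This proves the first inequality.

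For the second inequality, $m>4\sqrt\alpha\ge4$ gives $m\ge5$, so $L=\pi/m\le\pi/5<\pi/4$ and hence $\tan L<1$. There is no genuine obstacle in this argument. The only step needing care is reducing the two-variable maximum to the diagonal $\theta=\zeta$, and that relies on the monotonicity of sine on $[0,\frac\pi2]$, which is available because $2L<\frac\pi2$.
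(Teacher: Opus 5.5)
Your proposal is correct and follows the paper's proof essentially step for step: you apply the upper bound of \Cref{prop:greens:ptwise}, reduce to the diagonal via monotonicity of $\sin(2\theta)$ on $[0,L]$, bound $\sin(2\zeta)\sin(2(L-\zeta))\le\sin^2 L$, and use $\sin^2L/\sin(2L)=\tfrac12\tan L$ together with $L<\pi/4$. Your product-to-sum computation just makes explicit the inequality that the paper cites without derivation.
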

\begin{proof}
For $0\le\theta<\zeta\le L$, monotonicity of $\sin(2\theta)$ on $[0,L]$ and the identity
\[
\sin(2\zeta)\sin(2(L-\zeta))\le \sin^2L
\]
give $\mathcal S(\theta,\zeta)\le\sin^2L$; the case $\zeta<\theta$ is identical. The result follows from the upper bound in \eqref{ineq:Hm-abs-bounds} and $L<\frac\pi4$.
\end{proof}

\begin{lemma}
\label{prop:greens:bounds}
Assume that $m>4\sqrt\alpha$. Then
\[
\|\partial_\theta H_m\|_{L^\infty}\le \frac{\pi\alpha^2}{2P(0)}.
\]
\end{lemma}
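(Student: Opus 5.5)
We differentiate the explicit formula \eqref{eqn:greens:reduced} in $\theta$ and bound each factor using the pointwise and derivative estimates already established. For $\theta\neq\zeta$,
\[
\partial_\theta H_m(\theta,\zeta) = -\frac{1}{P(0)c_m(0)}\begin{cases} b'(\theta)c_m(\zeta), & 0\le\theta<\zeta,\\ c_m'(\theta)b(\zeta), & \zeta<\theta\le L,\end{cases}
\]
where $L=\frac{\pi}{m}$. The kernel is continuous across the diagonal, with a jump in $\partial_\theta H_m$ of size $1/P(\zeta)$. Off the diagonal, however, $\partial_\theta H_m$ is a genuine bounded function, so it suffices to bound each branch separately.

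\emph{Prefactor.} As in the proof of \Cref{prop:greens:ptwise}, \Cref{lemma:abd-props}(v) and \Cref{lem:b-c-ptwise} give
\[
|c_m(0)|=\alpha^{-1}b(L)\ge \frac{1}{2\alpha}\sin(2L),
\]
so that
\[
\frac{1}{P(0)|c_m(0)|}\le \frac{2\alpha}{P(0)\sin(2L)}.
\]

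\emph{Branch $\theta<\zeta$.} By \Cref{lem:b-c-prime}(i), $0<b'(\theta)\le\alpha$. By \eqref{ineq:cm-ptwise-new}, $|c_m(\zeta)|\le \frac12\sin(2(L-\zeta))\le \frac12\sin(2L)$, using that $2(L-\zeta)\in[0,2L]\subset[0,\frac{\pi}{2})$ and $\sin$ is increasing there. Hence this branch is bounded by
\[
\frac{2\alpha}{P(0)\sin(2L)}\cdot\alpha\cdot\frac12\sin(2L)=\frac{\alpha^2}{P(0)}.
\]

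\emph{Branch $\zeta<\theta$.} By \Cref{lem:b-c-prime}(ii), $0<c_m'\le1$. By \eqref{ineq:b-ptwise-new}, $b(\zeta)\le\frac{\alpha}{2}\sin(2\zeta)\le\frac{\alpha}{2}\sin(2L)$. This gives the same bound $\frac{\alpha^2}{P(0)}$.

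Since $\frac{\alpha^2}{P(0)}\le\frac{\pi\alpha^2}{2P(0)}$, the lemma follows, with room to spare; the factor $\pi/2$ presumably comes from using cruder bounds such as $\sin x\le x$ and $\sin x\ge\frac{2}{\pi}x$ in the prefactor.

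The only delicate point is the cancellation of $\sin(2L)$ between the prefactor and the factors $c_m(\zeta)$ or $b(\zeta)$. If one instead used the crude bound $|c_m(0)|\gtrsim L/\alpha$ without pairing it with the matching smallness $|c_m(\zeta)|,\,b(\zeta)\lesssim \alpha L$, the estimate would lose a factor of $m$. The plan is therefore to keep all the sine factors explicit until the end, so that this cancellation is visible. Everything else is a direct citation of \Cref{lemma:abd-props}, \Cref{lem:b-c-ptwise} and \Cref{lem:b-c-prime}.
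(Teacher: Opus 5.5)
Your proof is correct and follows the paper's argument: differentiate \eqref{eqn:greens:reduced} on each branch, bound the prefactor using $|c_m(0)|\ge(2\alpha)^{-1}\sin(2L)$ with $L=\pi/m$, and use $0<b'\le\alpha$ and $0<c_m'\le1$ from \Cref{lem:b-c-prime}. The one difference is in the undifferentiated factors. The paper bounds them linearly, $|c_m(\zeta)|\le L-\zeta$ and $b(\zeta)\le\alpha\zeta$, by integrating the derivative bounds, and then uses $\sin(2L)\ge 4L/\pi$; that is where the factor $\pi/2$ comes from. You instead use the sine bounds \eqref{ineq:b-ptwise-new}--\eqref{ineq:cm-ptwise-new}, so $\sin(2L)$ cancels exactly and you get the sharper constant $\alpha^2/P(0)$.
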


\begin{proof}
Set $L=\frac{\pi}{m}$. By \Cref{lem:b-c-prime},
\[
0<b'\le\alpha,\qquad 0<c_m'\le1.
\]
Thus, using $c_m(L)=b(0)=0$,
\[
|c_m(\zeta)|\le L-\zeta,\qquad b(\zeta)\le\alpha\zeta.
\]
Also \Cref{lem:b-c-ptwise} gives $|c_m(0)|\ge(2\alpha)^{-1}\sin(2L)$. Differentiating \eqref{eqn:greens:reduced}, in either region $\theta<\zeta$ or $\zeta<\theta$, therefore gives
\[
|\partial_\theta H_m(\theta,\zeta)|
\le \frac{2\alpha^2L}{P(0)\sin(2L)}
\le \frac{\pi\alpha^2}{2P(0)},
\]
where we used $\sin(2L)\ge\frac{4L}{\pi}$.
\end{proof}

\subsection{\textit{a priori estimates}}
In this section, we prove some key \textit{a priori} bounds for smooth solutions to the system \eqref{eqn:1dsys}. A smooth solution of  \eqref{eqn:1dsys} with admissible initial data clearly satisfies
\begin{equation}
\label{eqn:duhamel-g}
    g(t, \theta) = g_0(\phi(0, t,\theta)) - \int_0^t \partial_\theta P(s,\phi(s, t,\theta))(4G^2 + (\partial_\theta G)^2))(s,\phi(s, t,\theta)) \ \mathrm{d}s
\end{equation}
\begin{equation*}
    P(t, \theta)=P_0(\phi(0,t,\theta))
\end{equation*}
where $\phi$ is the flow map for $t, s \in \RR$ defined via
\begin{equation}
\label{eqn:flow-map}
    \frac{\mathrm{d}}{\mathrm{d}t} \phi(t, s, \theta) = 2G(t,  \phi(t, s, \theta)), \quad \phi(s,s,\theta) = \theta,
\end{equation}
i.e. $\phi(t, s, \theta)$ is the position of the particle at time $t$ which at time $s$ is located at $\theta$.  For concision, we will sometimes denote $\phi(t,s,\theta) = \phi_{t,s}(\theta)$ and for given $f$, $f(t, \phi(t,s,\theta)) = f \circ \phi_{t,s}$. We shall use these formulas to control various norms of the solution in terms of $\|g(t, \cdot)\|_{L^1(\mathbb{S}^1)}$. Subsequently, we give some sufficient conditions for the control of $\|g(t, \cdot)\|_{L^1(\mathbb{S}^1)}$ given that $(g_0, P_0)$ satisfy hypotheses \eqref{H1} and \eqref{H2}. This will enable us to formulate a continuation criterion for \eqref{eqn:1dsys} in the next section. 

We present first some basic estimates.
\begin{proposition}[\textit{a priori} estimates]
\label{prop:apriori-ests1}
    Let $(g, P)$ be a smooth solution to \eqref{eqn:1dsys} with initial data $(g_0, P_0)$ satisfying \eqref{H1} and \eqref{H2}. Then for each $t \ge 0$,
    \begin{enumerate}[label=\normalfont(\roman*)]
        \item $P(t, 0) = \sup P(t,\theta) = \sup P_0(\theta) = P_0(0)$ and $P(t, \frac{\pi}{m}) = \inf P(t,\theta) = \inf P_0(\theta) = P_0(\frac{\pi}{m})$.
        \item \begin{equation*}
            \| G (t, \cdot) \|_{C^{1}(\mathbb{S}^1)} \le  C\|g(t, \cdot)\|_{L^1(\mathbb{S}^1)}
        \end{equation*}
        for some $C > 0$ depending only on $P_0$.
        \item \begin{equation*}\|P(t, \cdot)\|_{C^1(\mathbb{S}^1)} \le C\|P_0\|_{C^1}e^{\int_0^t \|g(s, \cdot)\|_{L^1}\mathrm{d}s}\end{equation*}
        \item \begin{equation*}
            \|g(t, \cdot)\|_{C^0(\mathbb{S}^1)} \le C(\|g_0\|_{L^\infty} + e^{\int_0^t \|g(s, \cdot)\|_{L^1}\mathrm{d}s}\int_0^t \|g(s, \cdot)\|_{L^1}^2 \ \mathrm{d}s)
        \end{equation*}
        \item \begin{equation*}
            \| G (t, \cdot) \|_{C^{2}(\mathbb{S}^1)} \le  C\Big(\|g_0\|_{L^\infty} + \|g(t, \cdot)\|_{L^1}+\int_0^t \|g(s, \cdot)\|_{L^1}^2 \ \mathrm{d}s  \Big)e^{\int_0^t \|g(\tau, \cdot)\|_{L^1}\mathrm{d}\tau}
        \end{equation*}
    \end{enumerate}
\end{proposition}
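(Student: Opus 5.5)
We prove the five items in order; each uses only the previous ones, the transport structure of \eqref{eqn:1dsys}, and the Green's function bounds of \Cref{sec:greensbounds}.

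\emph{Item (i).} Since $P$ is transported, $P(t,\cdot)=P_0\circ\phi_{0,t}$. By the remark after \Cref{assumps}, uniqueness propagates \eqref{H1}: $g(t)$ is odd and $m$-fold symmetric, $P(t)$ is even. Then $G(t)$ is odd by \Cref{prop:G-greens-formula}, so $G(t,0)=G(t,\frac{\pi}{m})=0$. Hence $0$ and $\frac{\pi}{m}$ are fixed by the flow $\phi$, and $\phi_{0,t}$ is an increasing homeomorphism of $[0,\frac{\pi}{m}]$. Therefore $P(t)$ stays nonincreasing on $[0,\frac{\pi}{m}]$, with $P(t,0)=P_0(0)$ and $P(t,\frac{\pi}{m})=P_0(\frac{\pi}{m})$, and so $\alpha(P(t))=\alpha$.

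\emph{Item (ii).} By (i), the hypotheses of \Cref{prop:G-greens-formula}, \Cref{cor:greens-Linf} and \Cref{prop:greens:bounds} hold at each time with constants depending only on $P_0(0)$ and $\alpha$. Differentiating under the integral in $G=\int_0^{\pi/m}H_m g$ (the kernel is Lipschitz in $\theta$ across the diagonal), we get
\[
|G|+|\partial_\theta G|\le \bigl(\|H_m\|_{L^\infty}+\|\partial_\theta H_m\|_{L^\infty}\bigr)\|g\|_{L^1(0,\pi/m)}.
\]
Oddness and $m$-fold symmetry then extend this bound to $\mathbb{S}^1$.

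\emph{Item (iii).} Differentiating the transport equation gives
\[
(\partial_t+2G\partial_\theta)\partial_\theta P=-2\partial_\theta G\,\partial_\theta P.
\]
Along characteristics this yields $|\partial_\theta P(t)|\le\|P_0'\|_{L^\infty}\exp\bigl(2\int_0^t\|\partial_\theta G\|_{L^\infty}\bigr)$. Applying (ii) finishes the bound, with the constant absorbed into the exponent. Strictly, the exponent is $C\int\|g\|_{L^1}$; the statement's normalization absorbs $C$.

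\emph{Item (iv).} Use the Duhamel formula \eqref{eqn:duhamel-g}. The source term is at most $\|\partial_\theta P(s)\|_{L^\infty}\bigl(4\|G\|^2_{L^\infty}+\|\partial_\theta G\|^2_{L^\infty}\bigr)$. By (iii) and (ii) this is at most $C e^{\int_0^s\|g\|_{L^1}}\|g(s)\|_{L^1}^2$. Bounding $e^{\int_0^s}\le e^{\int_0^t}$ and integrating in $s$ gives (iv).

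\emph{Item (v).} From the equation, $\partial_\theta(P\partial_\theta G)=g-4PG$, so
\[
\partial_\theta^2G=P^{-1}\bigl(g-4PG-\partial_\theta P\,\partial_\theta G\bigr).
\]
Hence
\[
\|\partial_\theta^2G\|_{L^\infty}\le C\bigl(\|g\|_{L^\infty}+\|G\|_{C^0}+\|\partial_\theta P\|_{L^\infty}\|\partial_\theta G\|_{L^\infty}\bigr).
\]
Insert (iv), (ii) and (iii). The cross term gives $e^{\int_0^t\|g\|_{L^1}}\|g(t)\|_{L^1}$. Collecting terms and using $e^{\int_0^t\|g\|_{L^1}}\ge1$ yields the stated form.

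\emph{Main obstacle.} The delicate step is (i)--(ii): ensuring that the Green's function bounds apply uniformly in time. This requires that evenness and monotonicity of $P(t)$ persist and that the endpoints $0$ and $\frac{\pi}{m}$ are invariant under the flow. Both follow from the oddness of $G$, which in turn follows from uniqueness for the BVP.
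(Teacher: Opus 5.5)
Your proposal is correct and follows the paper's proof essentially step by step. It uses oddness of $G$ (from uniqueness for the BVP) to pin the endpoints $0,\frac{\pi}{m}$, so that transport preserves the range and monotonicity of $P$. It then uses the kernel bounds of \Cref{cor:greens-Linf} and \Cref{prop:greens:bounds} for (ii), Gr\"onwall along characteristics for (iii), Duhamel for (iv), and solving for $\partial_\theta^2 G$ for (v). The only difference is cosmetic: in (iii) you differentiate the transport equation for $\partial_\theta P$, whereas the paper differentiates the flow map $\partial_\theta\phi$.

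Your caveat about the exponent is apt, but the constant cannot literally be absorbed. Both your argument and the paper's Gr\"onwall step actually give $e^{C\int_0^t\|g\|_{L^1}}$ with $C\sim\alpha^2/P_0(0)$. That is the correct form of (iii)--(v), and it is all the continuation argument needs.
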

\begin{proof}
$ $\newline
    \begin{enumerate}[label=\normalfont(\roman*)]
        \item Due to $m$-fold symmetry, for each $t \ge 0$, $G(t, \cdot)$ solves the BVP \eqref{eqn:bvp} with $P = P(t, \theta)$ and $g = g(t,\theta)$. Since $g(t, \theta)$ is odd,  $G$ is also odd by \Cref{prop:G-greens-formula} and $G(t, 0) = G(t, \frac{\pi}{m}) = 0$. All the conclusions now follow from the fact that $P$  is transported by $G$. 
        \item By \Cref{prop:G-greens-formula}, on $[0, \frac{\pi}{m}]$, 
        \begin{equation*}
            G(t, \theta) = \int_0^{\frac{\pi}{m}} H_m^t(\theta,\zeta) g(t, \zeta) \ \mathrm{d}\zeta
        \end{equation*}
        where $H_m^t(\cdot, \cdot)$ denotes the Green's kernel for \eqref{eqn:reduced-bvp} when $P=P(t, \cdot)$. Hence by \Cref{cor:greens-Linf,prop:greens:bounds} (recall that $\alpha = \alpha(P)$ is independent of $t$)
        \begin{equation*}
            \| G(t, \cdot)\|_{L^\infty} \le \|H_m^t\|_{L^\infty}\|g(t,\cdot)\|_{L^1(\mathbb{S}^1)} \le \frac{\alpha^2}{4P(t,0)}\|g(t,\cdot)\|_{L^1(\mathbb{S}^1)} = \frac{\alpha^2}{4P_0(0)}\|g(t,\cdot)\|_{L^1(\mathbb{S}^1)} 
        \end{equation*}
        and 
        \begin{equation*}
            \| \partial_\theta G(t, \cdot)\|_{L^\infty} \le  \frac{\pi\alpha^2}{2P(t,0)} \|g(t,\cdot)\|_{L^1(\mathbb{S}^1)} = \frac{\pi\alpha^2}{2P_0(0)} \|g(t,\cdot)\|_{L^1(\mathbb{S}^1)}
        \end{equation*}
        \item From \eqref{eqn:flow-map}, since $G(t, \cdot) \in C^1(\mathbb{S}^1)$ it follows that $\phi(t,s,\cdot) \in C^1(\mathbb{S}^1)$. This is a consequence of smooth dependence of the ODE solution on the parameter $\theta$. Further, since
        \begin{equation*}
            \partial_\theta\phi(t,s,\theta) = 1 + 2\int_s^t \partial_\theta G(\tau, \phi(\tau,s,\theta))\partial_\theta \phi(\tau,s,\theta) \ \mathrm{d}\tau
        \end{equation*}
        we have by (ii) above, for $s \le t$ ($s \ge t$ is identical)
        \begin{align*}
            \|\partial_\theta\phi(t,s,\cdot)\|_{C^0} &\le  C(1 + \int_s^t \|G(\tau, \cdot)\|_{C^1}\|\partial_\theta\phi(\tau,s, \cdot)\|_{C^0}\mathrm{d}\tau)\\
            &\le C(1 + \int_s^t \|g(\tau, \cdot)\|_{L^1}\|\partial_\theta\phi(\tau,s, \cdot)\|_{C^0} \mathrm{d}\tau),\\
        \end{align*}
       Hence by Gr\"{o}nwall,
        \begin{equation}
        \label{ineq:pa_thetaphiC_0}
            \|\partial_\theta\phi(t,s,\cdot)\|_{C^0} \le C\exp\left(\int_s^t \|g(\tau, \cdot)\|_{L^1}\mathrm{d}\tau\right)
        \end{equation}
        Now since $P = P_0(\phi(0,t,\theta))$, we obtain 
        \begin{equation*}
            \|P(t, \cdot)\|_{C^1} \le \|P_0\|_{C^1}(1 + \|\partial_\theta \phi(t,0, \cdot)\|_{C^0}) \le  C\|P_0\|_{C^1}\exp\left(\int_0^t \|g(\tau, \cdot)\|_{L^1}\mathrm{d}\tau\right)
        \end{equation*}
        \item We have
        \begin{align}
            \|g(t, \cdot)\|_{C^0} &\le C(\|g_0\|_{L^\infty} + \int_0^t \|P(s, \cdot)\|_{C^1}\|G(s, \cdot)\|_{C^1}^2 \ \mathrm{d}s) \\
            &\le C(\|g_0\|_{L^\infty} + e^{\int_0^t \|g(\tau, \cdot)\|_{L^1}\mathrm{d}\tau}\int_0^t \|g(s, \cdot)\|_{L^1}^2 \ \mathrm{d}s) 
        \end{align}
        \item From $\partial_{\theta\theta} G =\frac{1}{P}(g - 4PG - \partial_\theta P \partial_\theta G)$, we deduce
        \begin{align}
            \|\partial_{\theta\theta} G(t, \cdot)\|_{C^0} &\le C(\|g(t, \cdot)\|_{C^0} + \|P(t, \cdot)\|_{C^0}\|G(t, \cdot)\|_{C^0} + \|P(t, \cdot)\|_{C^1}\|G(t, \cdot)\|_{C^1})\\
            &\le  C(\|g(t, \cdot)\|_{C^0} + \|g(t, \cdot)\|_{L^1} + \|P(t, \cdot)\|_{C^1}\|g(t, \cdot)\|_{L^1})\\
            &\le C\Big(\|g_0\|_{L^\infty} + e^{\int_0^t \|g(\tau, \cdot)\|_{L^1}\mathrm{d}\tau}\int_0^t \|g(s, \cdot)\|_{L^1}^2 \ \mathrm{d}s + \|g(t, \cdot)\|_{L^1}e^{\int_0^t \|g(\tau, \cdot)\|_{L^1}\mathrm{d}\tau}\Big) \\
            &\le C\Big(\|g_0\|_{L^\infty} + \|g(t, \cdot)\|_{L^1}+\int_0^t \|g(s, \cdot)\|_{L^1}^2 \ \mathrm{d}s  \Big)e^{\int_0^t \|g(\tau, \cdot)\|_{L^1}\mathrm{d}\tau}
        \end{align}
        Together with (ii), this gives the claimed estimate. 
    \end{enumerate}
\end{proof}

In view of \Cref{prop:apriori-ests1} and the formulae \eqref{eqn:duhamel-g}, \eqref{eqn:flow-map}, it is not difficult to see that control of  $\|g(s, \cdot)\|_{L^1}$ propagates $C^{1,\gamma}$ regularity of $g$ and $C^{2,\gamma}$ regularity of $P$ in time: if for each $t \ge 0$, we have  $\sup_{0\le s\le t} \|g(s, \cdot)\|_{L^1} \le M_t < \infty$, then we obtain $\|g(s, \cdot)\|_{C^{1,\gamma}} \le C$ and $ \|P(s, \cdot)\|_{C^{2,\gamma}} \le C$ for $0 \le s \le t$ and $C = C(M_t, \|g_0\|_{C^{1,\gamma}},  \|P_0\|_{C^{1,\gamma}}) < \infty$. Indeed, $g(t, \cdot) \in L^1$ implies that $G(t, \cdot) \in C^2$, which means the flow map is $C^2$ in $\theta$, which is sufficient to propagate the mentioned regularity. 

Next we prove some estimates on the crucial norm $\|g(t, \cdot)\|_{L^1(\mathbb{S}^1)}$. It turns out to be more fruitful to consider instead the quantity
\begin{equation}
\label{def:gweightedP-2L1}
    \int_0^{\frac{\pi}{m}}\frac{g}{P^2} \ \mathrm{d}\theta
\end{equation}
Since $g_-(t,\theta) \le \|g_0\|_{L^\infty}$, we have 
\begin{equation}
\label{ineq:|g|ltg}
   \begin{aligned}
        |g(t,\theta)| &= g_+(t,\theta) + g_-(t,\theta) \\
        &= g(t,\theta) + 2g_-(t,\theta) \\
        &\le g(t,\theta) + 2\|g_0\|_{L^\infty}
   \end{aligned}
\end{equation}
Furthermore  $P_0(\frac{\pi}{m}) \le P(t, \theta) \le P_0(0)$, and $g(t,\theta)$ has odd symmetry and is $m$-fold symmetric. So to estimate $\|g(t, \cdot)\|_{L^1(\mathbb{S}^1)}$, it is sufficient to estimate \eqref{def:gweightedP-2L1}. We also introduce the quantity
\begin{equation}
\label{def:Ibplus}
    \mathcal I_b^+(t) := \int_0^t \frac{1}{b(s,\frac{\pi}{m})}\int_0^{\frac{\pi}{m}} g_+(s,\theta)b(s,\theta)\,\mathrm{d}\theta\,\mathrm{d}s.
\end{equation}
The following estimate is the basis of the continuation criteria we enunciate in \Cref{sec:continuation-crit}. 

\begin{proposition}
\label{prop:apriori-gL1}
Under the assumptions of \Cref{prop:apriori-ests1}, we have for each $t \ge 0$,
\begin{equation*}
\begin{aligned}
&\int_0^{\frac{\pi}{m}}\frac{g}{P^2}(t,\theta)\,\mathrm{d}\theta\\
&\le \Bigg(\int_0^{\frac{\pi}{m}}\frac{g_0}{P_0^2}(\theta)\,\mathrm{d}\theta
+\frac{\pi\alpha^2}{mP_0^2(0)}\|g_0\|_\infty
+\frac{4\pi^2\alpha^3}{m^2P_0^3(0)}\|g_0\|_\infty^2t\Bigg)
 e^{\left(\frac{2\alpha^3}{P_0(0)}\mathcal I_b^+(t)
 +\frac{8\pi\alpha^3}{mP_0(0)}\|g_0\|_\infty t\right)}.
\end{aligned}
\end{equation*}
Consequently,
\begin{equation*}
\begin{aligned}
&\|g(t,\cdot)\|_{L^1(\mathbb{S}^1)}\\
&\le \Bigg(\alpha^2\|g_0\|_{L^1(\mathbb{S}^1)}
+2\pi\alpha^2\|g_0\|_\infty
+\frac{8\pi^2\alpha^3}{mP_0(0)}\|g_0\|_\infty^2t\Bigg)
 e^{\left(\frac{2\alpha^3}{P_0(0)}\mathcal I_b^+(t)
 +\frac{8\pi\alpha^3}{mP_0(0)}\|g_0\|_\infty t\right)}
+4\pi\alpha^2\|g_0\|_\infty.
\end{aligned}
\end{equation*}
\end{proposition}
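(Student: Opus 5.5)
We compute $\frac{d}{dt}\int_0^{\pi/m} g/P^2\,d\theta$ directly from \eqref{eqn:1dsys}. The point of the weight $P^{-2}$ is that it turns part of the transport term into an exact derivative. Writing $\partial_t g = -2G g' - (4G^2+(G')^2)P'$ and $\partial_t(P^{-2}) = -2G\,\partial_\theta(P^{-2})$, we get
\[
\partial_t\Big(\frac{g}{P^2}\Big) = -2G\,\partial_\theta\Big(\frac{g}{P^2}\Big) - \frac{(4G^2+(G')^2)P'}{P^2}.
\]
Integrating by parts, and using $G(t,0)=G(t,\frac{\pi}{m})=0$ (oddness and $m$-fold symmetry, as in \Cref{prop:apriori-ests1}(i)), the transport term becomes $2\int G'\,g/P^2$. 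We then substitute $g=(PG')'+4PG$ in this term, since we want a term that can be compared with the source $-\int (4G^2+(G')^2)P'/P^2$. We have
\[
2\int \frac{G'(PG')'}{P^2}=\int \frac{((PG')^2)'}{P^3}, \qquad 8\int \frac{GG'}{P}.
\]
Integrating both by parts once more produces $3\int (G')^2P'/P^2$ and $4\int G^2P'/P^2$, together with boundary terms at $\frac{\pi}{m}$, where $G=0$ but $G'\neq 0$. Collecting terms, the $4G^2P'$ contributions cancel exactly, and what remains is a multiple of $\int (G')^2 P'/P^2$ (which has a favorable sign, since $P'\le 0$, or at least is controllable) plus boundary terms $[(G')^2/P]$ at $0$ and $\frac{\pi}{m}$. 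By \eqref{H3}, $P'$ vanishes near the endpoints, so these boundary terms involve only $G'(t,0)^2/P_0(0)$ and $G'(t,\frac{\pi}{m})^2/P_0(\frac{\pi}{m})$. I expect this bookkeeping to be routine but sign-sensitive. If the exact cancellation fails, I would fall back on bounding $|P'|/P^2$ by $-\partial_\theta(P^{-1})$ and integrating this to get factors of $\alpha/P_0(0)$.

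The main obstacle is bounding $G'$ linearly in terms of $\int g/P^2$ plus $\mathcal I_b^+$-type quantities, rather than quadratically via $\|g\|_{L^1}$. For this we use the Green's representation of \Cref{prop:G-greens-formula}. At the endpoints,
\[
G'(0)=-\frac{1}{P(0)c_m(0)}\int_0^{\pi/m} c_m(\zeta)g(\zeta)\,d\zeta, \qquad G'(\tfrac{\pi}{m})=-\frac{1}{P(0)c_m(0)}\int_0^{\pi/m} b(\zeta)g(\zeta)\,d\zeta.
\]
We split $g=g_+-g_-$ and use $g_-\le\|g_0\|_\infty$ from \eqref{ineq:g:inf}. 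The $g_-$ parts are bounded by $\|g_0\|_\infty$ times the bounds of \Cref{lem:b-c-ptwise} and $|c_m(0)|\ge \sin(2L)/(2\alpha)$; this produces the $\|g_0\|_\infty$ terms of size $\pi\alpha^2/(mP_0^2(0))$ and $8\pi\alpha^3/(mP_0(0))$. For the $g_+$ parts, one factor of $G'$ is written as $\frac{\alpha}{P_0(0)b(\pi/m)}\int g_+ b$, which is exactly the integrand of $\mathcal I_b^+$. The other factor is bounded by $C\int g_+\le C\big(\int g/P^2+\text{const}\cdot\|g_0\|_\infty\big)P_0(0)^2$, using \eqref{ineq:|g|ltg}.

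This yields a differential inequality of the form
\[
y'\le \Big(\tfrac{2\alpha^3}{P_0(0)}\,\tfrac{1}{b(\pi/m)}\int g_+b + \tfrac{8\pi\alpha^3}{mP_0(0)}\|g_0\|_\infty\Big)(y+A)+B,
\]
with $y=\int g/P^2$. Gr\"onwall then gives the first estimate, with the constant $A$ producing the additive $\|g_0\|_\infty$ term and $B$ the term linear in $t$. The second estimate follows from \eqref{ineq:|g|ltg}, oddness and $m$-fold symmetry, and $P\le P_0(0)$: these give $\|g\|_{L^1(\mathbb{S}^1)}=2m\int_0^{\pi/m}|g|\le 2mP_0(0)^2\int g/P^2+4\pi\|g_0\|_\infty$, and tracking the $\alpha^2$ factors from $P_0(0)^2/P^2\le\alpha^2$ completes the argument.
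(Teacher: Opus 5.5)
Your outline is the paper's proof. You use the same weighted quantity $\int_0^{\pi/m}g/P^2$. Your substitution of \eqref{eqn:BSlaw} into $2\int\partial_\theta G\,g/P^2$ is exactly the paper's step of multiplying the Biot--Savart law by $2\partial_\theta G/P^2$. It gives the same identity $\frac{\mathrm d}{\mathrm dt}\int g/P^2=[(\partial_\theta G)^2/P]_0^{\pi/m}+2\int(\partial_\theta G)^2\partial_\theta P/P^2$; the cancellation does hold, so no fallback is needed. The endpoint formulas, the $g_\pm$ split, the shift $\mathcal Y=y+A$ and Gr\"onwall are also the same.

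The one essential step you leave implicit is the $\theta=0$ boundary term. It equals $-\partial_\theta G(t,0)^2/P_0(0)$ and must be discarded by its sign. Your device ``one factor is $\frac{1}{b(\pi/m)}\int g_+b$'' does not apply to it, because the $g_+$ part of $\partial_\theta G(t,0)$ is proportional to $\int g_+c_m$, and $|c_m|$ is largest at $\theta=0$, where $b$ vanishes. Bounding its square in absolute value gives $\lesssim(\int g_+)^2\lesssim\mathcal Y^2$ with no $\mathcal I_b^+$ factor, a Riccati inequality that does not close.

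Once you drop that term, your route is actually simpler than the paper's. With $\int gb=\int g_+b-\int g_-b$ and $(A-B)^2\le A^2+B^2$ for $A,B\ge0$,
\[
\mathcal Y'\le \frac{\alpha^3}{P_0^3(0)b^2(t,\frac{\pi}{m})}\Big[\Big(\int_0^{\frac{\pi}{m}}g_+b\Big)^2+\Big(\int_0^{\frac{\pi}{m}}g_-b\Big)^2\Big]\le \frac{\alpha^3}{P_0(0)}\Big(\frac{1}{b(t,\frac{\pi}{m})}\int_0^{\frac{\pi}{m}}g_+b\Big)\mathcal Y+\frac{\pi^2\alpha^3}{m^2P_0^3(0)}\|g_0\|_\infty^2 .
\]
This uses $0\le b\le b(t,\frac{\pi}{m})$, $P\le P_0(0)$ and $\int g_+/P^2\le\mathcal Y$. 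Since $\mathcal Y(0)\ge0$, Gr\"onwall gives the stated bound a fortiori, with no mixed $\|g_0\|_\infty\mathcal Y$ term at all. The paper instead keeps both endpoint terms and factors $\alpha(\int gb)^2-(\int gc_m)^2=(\sqrt\alpha\int gb+\int gc_m)(\sqrt\alpha\int gb-\int gc_m)$. It then bounds the four products $I^\pm_1I^\pm_2$ separately, which is where its constants $2\alpha^3$, $8\pi\alpha^3/m$ and $4\pi^2\alpha^3/m^2$ come from.

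There are also two smaller corrections.
\begin{itemize}
\item \eqref{H3} is not among the hypotheses here (only \eqref{H1}--\eqref{H2}), and it would not make $\partial_\theta P$ vanish near $\theta=0$ anyway. It is also unnecessary: $P(t,0)=P_0(0)$ and $P(t,\frac{\pi}{m})=P_0(\frac{\pi}{m})$ simply because $0$ and $\frac{\pi}{m}$ are stagnation points (\Cref{prop:apriori-ests1}(i)).
\item In the $L^1$ conversion, $\int g\le P_0^2(0)\int g/P^2$ fails for sign-changing $g$. Write instead $\int|g|\le P_0^2(0)\int|g|/P^2=P_0^2(0)\big(\int g/P^2+2\int g_-/P^2\big)$. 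The bound $P^{-2}\le\alpha^2P_0^{-2}(0)$ on the $g_-$ term is what produces $4\pi\alpha^2\|g_0\|_\infty$.
\end{itemize}
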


\begin{proof}
        Consider the equation that $\frac{g}{P^2}$ satisfies:
        \begin{equation*}
            \partial_t\left(\frac{g}{P^2}\right) + 2G\partial_\theta\left(\frac{g}{P^2}\right) = -\frac{\partial_\theta P}{P^2}(4G^2 + (\partial_\theta G)^2)
        \end{equation*}
        Integrating, we obtain
        \begin{equation*}
            \frac{\mathrm{d}}{\mathrm{d}t} \int_0^{\frac{\pi}{m}}\frac{g}{P^2} \ \mathrm{d}\theta = -\int_0^{\frac{\pi}{m}}2G \partial_\theta\left(\frac{g}{P^2}\right) \ \mathrm{d}\theta - \int_0^{\frac{\pi}{m}} \frac{\partial_\theta P}{P^2}(4G^2 + (\partial_\theta G)^2) \ \mathrm{d}\theta
        \end{equation*}
        Multiply the ``Biot-Savart law" \eqref{eqn:BSlaw} by $\frac{2}{P^2}\partial_\theta G$ and integrate by parts to discover the identity

\begin{multline}
    \Big[\frac{4G^2+(\partial_\theta G)^2}{P}\Big]_0^{\frac{\pi}{m}}+\int_0^{\frac{\pi}{m}} (4G^2 + (\partial_\theta G)^2)\frac{\partial_\theta P}{P^2} \ \mathrm{d}\theta \\+  2\int_0^{\frac{\pi}{m}} (\partial_\theta G)^2\frac{\partial_\theta P}{P^2} \ \mathrm{d}\theta = -\int_0^{\frac{\pi}{m}} 2G\partial_\theta \Big(\frac{g}{P^2}\Big) \ \mathrm{d}\theta + \Big[\frac{gG}{P^2}\Big]_0^{\frac{\pi}{m}}
\end{multline}

and after using the boundary conditions on $G$,

\begin{equation*}
    \Big[\frac{(\partial_\theta G)^2}{P}\Big]_0^{\frac{\pi}{m}} +  2\int_0^{\frac{\pi}{m}} (\partial_\theta G)^2\frac{\partial_\theta P}{P^2} \ \mathrm{d}\theta = -\int_0^{\frac{\pi}{m}} 2G\partial_\theta \Big(\frac{g}{P^2}\Big) \ \mathrm{d}\theta -\int_0^{\frac{\pi}{m}} (4G^2 + (\partial_\theta G)^2)\frac{\partial_\theta P}{P^2} \ \mathrm{d}\theta
\end{equation*}
Thus
\begin{equation}
\label{ineq:d/dt-g/P2}
    \frac{\mathrm{d}}{\mathrm{d}t} \int_0^{\frac{\pi}{m}}\frac{g}{P^2} \ \mathrm{d}\theta =  \Big[\frac{(\partial_\theta G)^2}{P}\Big]_0^{\frac{\pi}{m}} +  2\int_0^{\frac{\pi}{m}} (\partial_\theta G)^2\frac{\partial_\theta P}{P^2} \ \mathrm{d}\theta \le  \Big[\frac{(\partial_\theta G)^2}{P}\Big]_0^{\frac{\pi}{m}} 
\end{equation}

Let $b = b(t, \theta)$ and $c_m = c_m(t, \theta)$ be the homogeneous ODE solutions to \eqref{eqn:b-ivp} and \eqref{eqn:cm-ivp} when $P = P(t, \theta)$ in \eqref{eqn:bvp}. Multiplying the equation $\partial_\theta (P\partial_\theta G) + 4PG = g$ by $b(t,\theta), c_m(t,\theta)$ and integrating by parts, (and using $-\alpha c_m(t, 0) = b(t, \frac{\pi}{m})$ from \Cref{lemma:abd-props}) we find

\begin{equation}
    \label{eqn:pa-th-G(0)}
    \partial_\theta G(t, 0) = \frac{\alpha}{P_0(0)b(t,\frac{\pi}{m})}\int_0^{\frac{\pi}{m}} g(t, \zeta)c_m(t, \zeta) \ \mathrm{d}\zeta 
\end{equation}
and
\begin{equation}
    \label{eqn:pa-th-G(pi/m)}
    \partial_\theta G(t, {\textstyle{\frac{\pi}{m}}}) = \frac{\alpha}{P_0(0) b(t,\frac{\pi}{m})}\int_0^{\frac{\pi}{m}} g(t, \zeta)b(t, \zeta) \ \mathrm{d}\zeta 
\end{equation}
Alternatively, one can differentiate the formula \eqref{eqn:greens:reduced}. We can substitute these formulas in  \eqref{ineq:d/dt-g/P2} to deduce
\begin{align}
\label{ineq:diff:g/P2}
        \frac{\mathrm{d}}{\mathrm{d}t} \int_0^{\frac{\pi}{m}}\frac{g}{P^2} \ \mathrm{d}\theta &\le \frac{\alpha^2}{P_0^3(0)b^2(t, \frac{\pi}{m})}\Big(\sqrt{\alpha}\int_0^{\frac{\pi}{m}} gb \ \mathrm{d}\zeta + \int_0^{\frac{\pi}{m}} gc_m \ \mathrm{d}\zeta\Big)\Big( \sqrt{\alpha}\int_0^{\frac{\pi}{m}} gb \ \mathrm{d}\zeta - \int_0^{\frac{\pi}{m}} gc_m \ \mathrm{d}\zeta\Big)
\end{align}

We now split the right members of the inequality above in the following way:
\begin{equation*}
    \begin{aligned}
        \sqrt{\alpha}\int_0^{\frac{\pi}{m}} gb \ \mathrm{d}\zeta + \int_0^{\frac{\pi}{m}} gc_m \ \mathrm{d}\zeta &= \sqrt{\alpha}\int_0^{\frac{\pi}{m}} g_+b \ \mathrm{d}\zeta + \int_0^{\frac{\pi}{m}} g_+c_m \ \mathrm{d}\zeta \\
        &-\sqrt{\alpha}\int_0^{\frac{\pi}{m}} g_-b \ \mathrm{d}\zeta -\int_0^{\frac{\pi}{m}} g_-c_m \ \mathrm{d}\zeta
    \end{aligned}
\end{equation*}
and
\begin{equation*}
    \begin{aligned}
        \sqrt{\alpha}\int_0^{\frac{\pi}{m}} gb \ \mathrm{d}\zeta - \int_0^{\frac{\pi}{m}} gc_m \ \mathrm{d}\zeta &= \sqrt{\alpha}\int_0^{\frac{\pi}{m}} g_+b \ \mathrm{d}\zeta - \int_0^{\frac{\pi}{m}} g_+c_m \ \mathrm{d}\zeta \\
        &-\sqrt{\alpha}\int_0^{\frac{\pi}{m}} g_-b \ \mathrm{d}\zeta +\int_0^{\frac{\pi}{m}} g_-c_m \ \mathrm{d}\zeta
    \end{aligned}
\end{equation*}
Denote
\begin{equation*}
    \begin{aligned}
        I^+_1 &= \sqrt{\alpha}\int_0^{\frac{\pi}{m}} g_+b \ \mathrm{d}\zeta + \int_0^{\frac{\pi}{m}} g_+c_m \ \mathrm{d}\zeta\\
        I^+_2 &= \sqrt{\alpha}\int_0^{\frac{\pi}{m}} g_+b \ \mathrm{d}\zeta -\int_0^{\frac{\pi}{m}} g_+c_m \ \mathrm{d}\zeta \\
        I^-_1 &= -\sqrt{\alpha}\int_0^{\frac{\pi}{m}} g_-b \ \mathrm{d}\zeta -\int_0^{\frac{\pi}{m}} g_-c_m \ \mathrm{d}\zeta\\
        I^-_2 &= -\sqrt{\alpha}\int_0^{\frac{\pi}{m}} g_-b \ \mathrm{d}\zeta +\int_0^{\frac{\pi}{m}} g_-c_m \ \mathrm{d}\zeta
    \end{aligned}
\end{equation*}

Observing that $I_2^+>0$, and invoking \Cref{lem:b-c-prime}, for $j=1,2$ we have
\begin{equation}
\label{ineq:g-proof-Iminus}
\begin{aligned}
|I^-_j|
&=\left|-\sqrt{\alpha}\int_0^{\frac{\pi}{m}}g_-b\,\mathrm d\zeta
\pm\int_0^{\frac{\pi}{m}}g_-c_m\,\mathrm d\zeta\right|\\
&\le \sqrt{\alpha}\int_0^{\frac{\pi}{m}}g_-b\,\mathrm d\zeta
-\int_0^{\frac{\pi}{m}}g_-c_m\,\mathrm d\zeta\\
&\le \frac{\pi}{m}\|g_0\|_\infty
\left(\sqrt{\alpha}\,b(t,{\textstyle\frac{\pi}{m}})+|c_m(t,0)|\right)\\
&\le \frac{2\pi}{m}\|g_0\|_\infty\sqrt{\alpha}\,
 b(t,{\textstyle\frac{\pi}{m}}),
\end{aligned}
\end{equation}
where we used $|c_m(t,0)|=\alpha^{-1}b(t,\frac{\pi}{m})$.

For the remainder of the proof, set
\begin{equation*}
\mathcal Y(t):=
\int_0^{\frac{\pi}{m}}\frac{g(t,\theta)}{P^2(t,\theta)}\,\mathrm d\theta
+\frac{\pi\alpha^2}{mP_0^2(0)}\|g_0\|_\infty.
\end{equation*}
Since $g_-\le \|g_0\|_\infty$ and $P(t,\theta)\ge P_0(0)/\alpha$,
\begin{equation}
\label{ineq:gplus-by-Y}
0\le \int_0^{\frac{\pi}{m}}\frac{g_+}{P^2}\,\mathrm d\theta
=\int_0^{\frac{\pi}{m}}\frac{g}{P^2}\,\mathrm d\theta
 +\int_0^{\frac{\pi}{m}}\frac{g_-}{P^2}\,\mathrm d\theta
\le \mathcal Y(t).
\end{equation}
In particular $\mathcal Y(t)\ge0$. Moreover, using that $b$ is increasing, $c_m<0$ is increasing, and
$|c_m(t,0)|=\alpha^{-1}b(t,\frac{\pi}{m})$, we obtain, for $j=1,2$,
\begin{equation}
\label{ineq:g-proof-est1}
\begin{aligned}
\frac{\alpha^2}{P_0^2(0)b(t,\frac{\pi}{m})}|I^+_j|
&\le \frac{\alpha^2}{P_0^2(0)b(t,\frac{\pi}{m})}
\left(\sqrt{\alpha}\int_0^{\frac{\pi}{m}}g_+b\,\mathrm d\zeta
+\int_0^{\frac{\pi}{m}}g_+|c_m|\,\mathrm d\zeta\right)\\
&\le (\alpha^{5/2}+\alpha)
\int_0^{\frac{\pi}{m}}\frac{g_+}{P^2}\,\mathrm d\theta\\
&\le 2\alpha^{5/2}\mathcal Y(t).
\end{aligned}
\end{equation}
Thus
\begin{equation}
\label{ineq:diff:g/P2-split}
\frac{\mathrm d}{\mathrm dt}\int_0^{\frac{\pi}{m}}\frac{g}{P^2}\,\mathrm d\theta
\le \frac{\alpha^2}{P_0^3(0)b^2(t,\frac{\pi}{m})}
\left(I_1^+I_2^+ + I_1^+I_2^-+I_1^-I_2^+ + I_1^-I_2^-\right).
\end{equation}
Since
\[
I_1^+\le \sqrt{\alpha}\int_0^{\frac{\pi}{m}}g_+b\,\mathrm d\zeta,
\]
\eqref{ineq:g-proof-est1} gives
\begin{equation}
\label{ineq:g-proof-positive-product}
\frac{\alpha^2}{P_0^3(0)b^2(t,\frac{\pi}{m})}I_1^+I_2^+
\le \frac{2\alpha^3}{P_0(0)b(t,\frac{\pi}{m})}
\left(\int_0^{\frac{\pi}{m}}g_+b\,\mathrm d\zeta\right)\mathcal Y(t).
\end{equation}
For the mixed terms, \eqref{ineq:g-proof-Iminus} and \eqref{ineq:g-proof-est1} yield
\begin{equation}
\label{ineq:g-proof-mixed-products}
\begin{aligned}
&\frac{\alpha^2}{P_0^3(0)b^2(t,\frac{\pi}{m})}
\left(|I_1^+I_2^-|+|I_1^-I_2^+|\right)\\
&\hspace{2cm}\le
\frac{8\pi\alpha^3}{mP_0(0)}\|g_0\|_\infty\,\mathcal Y(t),
\end{aligned}
\end{equation}
and the negative-negative term satisfies
\begin{equation}
\label{ineq:g-proof-negative-product}
\frac{\alpha^2}{P_0^3(0)b^2(t,\frac{\pi}{m})}|I_1^-I_2^-|
\le \frac{4\pi^2\alpha^3}{m^2P_0^3(0)}\|g_0\|_\infty^2.
\end{equation}
Since the term added in the definition of $\mathcal Y$ is independent of time, combining
\eqref{ineq:diff:g/P2-split}--\eqref{ineq:g-proof-negative-product} gives
\begin{equation}
\label{ineq:Y-differential}
\mathcal Y'(t)
\le \left(
\frac{2\alpha^3}{P_0(0)b(t,\frac{\pi}{m})}
\int_0^{\frac{\pi}{m}}g_+b\,\mathrm d\zeta
+\frac{8\pi\alpha^3}{mP_0(0)}\|g_0\|_\infty
\right)\mathcal Y(t)
+\frac{4\pi^2\alpha^3}{m^2P_0^3(0)}\|g_0\|_\infty^2.
\end{equation}
Gr\"onwall's inequality therefore gives
\begin{equation}
\label{ineq:g-proof-gronwall}
\begin{aligned}
\mathcal Y(t)
&\le \left(\mathcal Y(0)
+\frac{4\pi^2\alpha^3}{m^2P_0^3(0)}\|g_0\|_\infty^2t\right)
\exp\left(
\frac{2\alpha^3}{P_0(0)}\mathcal I_b^+(t)
+\frac{8\pi\alpha^3}{mP_0(0)}\|g_0\|_\infty t
\right).
\end{aligned}
\end{equation}
Since $\int g/P^2\le\mathcal Y$, this proves the first claimed estimate.
Finally, using $P_0(\frac{\pi}{m})\le P(t,\theta)\le P_0(0)$, $P_0(\frac{\pi}{m})=P_0(0)/\alpha$, and the odd $m$-fold symmetry of $g$, we have
\begin{equation*}
\begin{aligned}
\|g(t,\cdot)\|_{L^1(\mathbb{S}^1)}
&=2m\int_0^{\frac{\pi}{m}}|g(t,\theta)|\,\mathrm{d}\theta\\
&\le 2mP_0^2(0)\int_0^{\frac{\pi}{m}}\frac{|g(t,\theta)|}{P^2(t,\theta)}\,\mathrm{d}\theta\\
&\le 2mP_0^2(0)\int_0^{\frac{\pi}{m}}\frac{g(t,\theta)}{P^2(t,\theta)}\,\mathrm{d}\theta
+4\pi\alpha^2\|g_0\|_\infty\\
&\le 2mP_0^2(0)\mathcal Y(t)+4\pi\alpha^2\|g_0\|_\infty.
\end{aligned}
\end{equation*}
Furthermore,
\begin{equation*}
2mP_0^2(0)\mathcal Y(0)
\le \alpha^2\|g_0\|_{L^1(\mathbb{S}^1)}+2\pi\alpha^2\|g_0\|_\infty.
\end{equation*}
Combining these inequalities with \eqref{ineq:g-proof-gronwall} gives the claimed $L^1$ estimate.
\end{proof}

\subsection{Existence and uniqueness for the 1D system}
\label{sec:1dsys-exist}
We now prove local existence and uniqueness for initial data $(g_0, P_0) \in C_m^{1,\gamma}(\mathbb{S}^1) \times C^{2,\gamma}_m(\mathbb{S}^1)$ satisfying \eqref{H1}-\eqref{H2}. In fact, uniqueness of solutions to \eqref{eqn:1dsys} with data in the class $C_m^{1,\gamma}(\mathbb{S}^1) \times C^{2,\gamma}_m(\mathbb{S}^1)$ follows immediately from Theorem~\ref{thm:lwp}, since each solution of \eqref{eqn:1dsys} defines a solution of \eqref{eqn:inho-2deuler:vort} with data in $\mathring{C}_m^{1, \gamma}(\mathbb{R}^2) \times \mathring{C}_m^{2, \gamma}(\mathbb{R}^2)$. It is also routine to show uniqueness directly for
\eqref{eqn:1dsys}. Hence, we focus here on the question of existence.
\subsubsection{Local existence}
\begin{proposition}
\label{prop:1dsys:lwp}
Let $\alpha=\alpha(P_0)$ and suppose that $m>4\sqrt{\alpha}$. Suppose $(g_0,P_0)\in C_m^{1,\gamma}(\mathbb{S}^1)\times C_m^{2,\gamma}(\mathbb{S}^1)$ satisfy \eqref{H1}--\eqref{H2}, and let $M$ be such that
\[
\max\{\|P_0\|_{C^{2,\gamma}},\|g_0\|_{C^{1,\gamma}}\}\le M.
\]
Then there exists $T=T(m,\gamma,\inf P_0,M)>0$ and a unique solution of \eqref{eqn:1dsys} satisfying
\[
g\in L^\infty(0,T;C_m^{1,\gamma})\cap C([0,T];C_m^{1,\beta}),
\qquad
P\in L^\infty(0,T;C_m^{2,\gamma})\cap C([0,T];C_m^{2,\beta})
\]
for every $0<\beta<\gamma$, and attaining the data $(g_0,P_0)$.
\end{proposition}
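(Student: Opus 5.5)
The plan is to obtain the local solution of \eqref{eqn:1dsys} by lifting the data to the plane and applying \Cref{thm:lwp}, rather than by running a separate iteration on $\mathbb{S}^1$. Set $\Omega_0(x):=g_0(\theta)$ and $\rho_0(x):=P_0(\theta)$. By the equivalence noted after \eqref{def:angular-k-Holder} (see \Cref{app:a}), these belong to $\mathring{C}_m^{1,\gamma}(\mathbb{R}^2)$ and $\mathring{C}_m^{2,\gamma}(\mathbb{R}^2)$ respectively, with norms controlled by $C(\gamma)M$. Moreover $\inf\rho_0=\inf P_0>0$ and $\alpha(\rho_0)=\alpha(P_0)$. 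To invoke \Cref{thm:lwp} we need $m>2\alpha$. This does not follow from $m>4\sqrt\alpha$ in general, so this route only works when $m>\max\{2\alpha,4\sqrt\alpha\}$.

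To avoid this restriction, I will instead carry out the iteration directly on $\mathbb{S}^1$, mirroring the proof of \Cref{thm:lwp}. At each step the elliptic problem is replaced by the BVP \eqref{eqn:bvp}, whose solvability requires only $m>4\sqrt\alpha$ by \Cref{lemma:trivial-kernel}. Define $P^0=P_0$ and $g^0=g_0$. Let $G^{n+1}$ solve $\partial_\theta(P^n\partial_\theta G^{n+1})+4P^nG^{n+1}=g^n$. Then define
\[
\partial_tP^{n+1}+2G^{n+1}\partial_\theta P^{n+1}=0,
\qquad
\partial_tg^{n+1}+2G^{n+1}\partial_\theta g^{n+1}=-\big(4(G^{n+1})^2+(\partial_\theta G^{n+1})^2\big)\partial_\theta P^n,
\]
both with the initial data.

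The key structural point is that $\alpha(P^n(t))=\alpha(P_0)$ for every $n$ and $t$, since $P^n$ is transported. Hence the BVP stays uniquely solvable for all iterates. The same is true of $\inf P^n=\inf P_0$ and $\sup P^n=\sup P_0$.

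The elliptic ingredient is the estimate
\[
\|G\|_{C^{3,\gamma}}\le C(m,\gamma,\inf P,\sup P,\|P\|_{C^{2,\gamma}})\,\|g\|_{C^{1,\gamma}}.
\]
I obtain it by first writing $\|G\|_{C^0}\le C\|g\|_{C^0}$. This follows from invertibility of the periodic BVP, made quantitative by a compactness--contradiction argument like Step 2 of the proof of \Cref{thm:elliptic-existence}, using \Cref{lemma:trivial-kernel} in place of \Cref{prop:uniqness-elliptic}. I then bootstrap through $G''=P^{-1}(g-4PG-P'G')$, together with interpolation. When $g$ is odd and $P$ is even (which \eqref{H1} guarantees, and which the iteration preserves by uniqueness), the $C^0$ bound follows instead directly from the Green's function bounds in \Cref{cor:greens-Linf} and \Cref{prop:greens:bounds}.

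With this estimate in hand, the 1D transport estimates (the $\mathbb{S}^1$ analogue of \Cref{prop:linear-theory}) give, for all $t\le T$,
\[
\|P^{n+1}(t)\|_{C^{2,\gamma}}\le e^{tC(M)}\|P_0\|_{C^{2,\gamma}},
\qquad
\|g^{n+1}(t)\|_{C^{1,\gamma}}\le e^{tC(M)}\big(\|g_0\|_{C^{1,\gamma}}+tC(M)\big).
\]
Choosing $T$ small closes the induction with the uniform bound $2M$. Compactness via Arzel\`a--Ascoli, with time derivatives bounded from the equations, then gives a subsequence converging in $C([0,T];C^{1,\beta})\times C([0,T];C^{2,\beta})$. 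We pass to the limit in the integral form, and the endpoint $\gamma$-bounds survive by lower semicontinuity.

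Uniqueness I would prove directly by the Gr\"onwall difference argument from the proof of \Cref{thm:lwp}, which is routine in 1D. This avoids the $m>2\alpha$ requirement that appeals to \Cref{thm:lwp} would impose.

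The main obstacle is the uniform elliptic constant for the BVP. It must depend only on $\inf P$, $\sup P$, $\|P\|_{C^{2,\gamma}}$ and $m$, so that it is stable along the iteration. I expect this to be handled cleanly by the compactness argument, since the class of admissible $P$ is compact in $C^1$.
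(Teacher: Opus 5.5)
Your proposal is correct in outline and follows the paper's proof closely. It uses the same iteration on the circle and the same observation that transport preserves $\inf P$ and $\sup P$ (hence $\alpha$), so the boundary value problem stays uniformly invertible. It closes the H\"older transport bounds at $2M$ for small $T$ in the same way, and finishes with Arzel\`a--Ascoli and lower semicontinuity of the endpoint norms. The differences lie in two ingredients.

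\textbf{The uniform elliptic bound.} The paper works with the reduced Dirichlet problem \eqref{eqn:reduced-bvp} on $[0,\frac{\pi}{m}]$ and the explicit kernel estimates of \Cref{cor:greens-Linf} and \Cref{prop:greens:bounds}. These control $G_n$ in $C^1$ by $\|g_{n-1}\|_{L^1}$, with constants depending only on $\alpha$ and $P_0(0)$, and the same bounds later drive the global estimates. They rely on $P$ being even \emph{and} nonincreasing on $[0,\frac{\pi}{m}]$, so parity alone, as in your fallback remark, does not suffice. Your compactness--contradiction argument on the periodic problem gives a non-explicit constant that also depends on $\|P\|_{C^{2,\gamma}}$. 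In exchange it needs only $m$-fold symmetry and $m>4\sqrt\alpha$, so it would give local existence without the parity and monotonicity parts of \eqref{H1}--\eqref{H2}.

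\textbf{Uniqueness.} Your direct Gr\"onwall argument is not merely a convenience. The paper's primary justification, via \Cref{thm:lwp}, tacitly needs $m>2\alpha$. That condition is not implied by $m>4\sqrt\alpha$ in general (e.g.\ $\alpha=9$, $m=13$). The direct argument, which the paper only calls routine, is what actually covers the stated range.

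\textbf{A step left implicit in both.} After extracting a subsequence along which $(g^{n+1},P^{n+1})$ converges, nothing guarantees that the previous iterate $(g^n,P^n)$ has the same limit. That previous iterate enters the BVP for $G^{n+1}$ and the source term for $g^{n+1}$, so compactness alone does not let you pass to the limit in the scheme. You can fix this by running your uniqueness difference estimate on consecutive iterates. That gives a contraction in a low norm such as $C([0,T];C^0)\times C([0,T];C^1)$ for $T$ small, hence convergence of the full sequence.
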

\begin{proof}
Let $g_0=g_0(\theta)$ and $P_0=P_0(\theta)$. For $n\ge 1$, define $(g_n,P_n)$ on $[0,\frac{\pi}{m}]$ inductively by
\begin{equation}
\label{eqn:iter:Pn}
   \begin{rcases}
        \partial_t P_n + 2G_n\partial_\theta P_n = 0,\\
        P_n(0, \theta) = P_0(\theta),
   \end{rcases}
\end{equation}
and
\begin{equation}
\label{eqn:iter:gn}
   \begin{rcases}
        \partial_t g_n + 2G_{n}\partial_\theta g_n = -\partial_\theta P_{n-1}(4G_n^2 + (\partial_\theta G_n)^2),\\
        g_n(0, \theta) = g_0(\theta),
   \end{rcases}
\end{equation}
where $G_n$ is the unique solution of the boundary value problem
\begin{equation}
    \label{eqn:itr:inhom-bvp}
    \begin{rcases}
   \partial_\theta (P_{n-1} \partial_\theta G_n) + 4P_{n-1}G_n = g_{n-1},\\
    G_n(t,0) = G_n(t,\frac{\pi}{m}) = 0.
\end{rcases}
\end{equation}
Thus,
\begin{equation}
\label{eqn:duhamel-g_n}
\begin{split}
    g_n(t, \theta) ={}& g_0(\phi^n(0, t,\theta)) \\
    &- \int_0^t \partial_\theta P_{n-1}(s,\phi^n(s,t,\theta))
    \big(4G_n^2 + (\partial_\theta G_n)^2\big)(s,\phi^n(s,t,\theta))\,\mathrm{d}s,
\end{split}
\end{equation}
while
\begin{equation*}
    P_n(t, \theta)=P_0(\phi^n(0,t,\theta)),
\end{equation*}
where $\phi^n$ is the flow map defined by
\begin{equation}
\label{eqn:flow-map_n}
    \frac{\mathrm{d}}{\mathrm{d}t} \phi^n(t,s,\theta)
    =2G_n(t,\phi^n(t,s,\theta)),
    \qquad \phi^n(s,s,\theta)=\theta.
\end{equation}

Fix $0<\beta<\gamma$ and assume inductively that
\[
P_{n-1}\in L^\infty(0,T;C^{2,\gamma})\cap C([0,T];C^{2,\beta}),
\qquad
g_{n-1}\in L^\infty(0,T;C^{1,\gamma})\cap C([0,T];C^{1,\beta}),
\]
and
\begin{equation}
\label{ineq:induct-hyp}
    \sup_{0\le t\le T}\|P_{n-1}(t)\|_{C^{2,\gamma}}\le 2M,
    \qquad
    \sup_{0\le t\le T}\|g_{n-1}(t)\|_{C^{1,\gamma}}\le 2M.
\end{equation}
The formula for $P_{n-1}$ along characteristics shows that its range agrees with that of $P_0$ and that its monotonicity on $[0,\pi/m]$ is preserved. In particular,
\[
    \inf P_{n-1}(t,\cdot)=\inf P_0,
    \qquad
    \alpha(P_{n-1}(t,\cdot))=\alpha(P_0).
\]
Hence the Green kernel bounds of Section~4.1 apply uniformly in $n$. In particular,
\begin{equation*}
    G_n(t,\theta)
    =\int_0^{\pi/m}H_m[P_{n-1}](\theta,\zeta)
    g_{n-1}(t,\zeta)\,\mathrm{d}\zeta.
\end{equation*}
Using the kernel estimates together with \eqref{eqn:itr:inhom-bvp}, we obtain
\begin{equation}
\label{ineq:Gn-holder}
    \sup_{0\le t\le T}\|G_n(t)\|_{C^{2,\gamma}}\le C_M,
\end{equation}
where throughout this proof $C_M$ denotes a positive constant depending only on $m$, $\gamma$, $\inf P_0$, and $M$, whose value may change from line to line. The inductive lower-exponent continuity and continuous dependence of the boundary value problem also give $G_n\in C([0,T];C^{2,\beta})$. Indeed, the kernel bounds first control $G_n$ and $\partial_\theta G_n$ uniformly; the equation
\[
    \partial_{\theta\theta}G_n
    =\frac{1}{P_{n-1}}
    \big(g_{n-1}-\partial_\theta P_{n-1}\partial_\theta G_n-4P_{n-1}G_n\big)
\]
first gives a uniform $C^2$ bound; hence $\partial_\theta G_n$ is uniformly Lipschitz, and applying the same identity once more gives the stated $C^{2,\gamma}$ bound.

The standard H\"older transport estimate applied to \eqref{eqn:iter:Pn} and \eqref{ineq:Gn-holder} yields
\begin{equation}
\label{ineq:P_nbnd}
    \|P_n(t)\|_{C^{2,\gamma}}
    \le e^{C_Mt}\|P_0\|_{C^{2,\gamma}}
    \le M e^{C_Mt}.
\end{equation}
Similarly, if
\[
    F_n:=-\partial_\theta P_{n-1}
    \big(4G_n^2+(\partial_\theta G_n)^2\big),
\]
then \eqref{ineq:induct-hyp} and \eqref{ineq:Gn-holder} imply
\[
    \|F_n(t)\|_{C^{1,\gamma}}\le C_M.
\]
The inhomogeneous H\"older transport estimate therefore gives
\begin{equation}
\label{ineq:g_nbnd}
    \|g_n(t)\|_{C^{1,\gamma}}
    \le e^{C_Mt}\left(\|g_0\|_{C^{1,\gamma}}+C_Mt\right)
    \le e^{C_Mt}(M+C_Mt).
\end{equation}
The characteristic formulas also give $P_n\in C([0,T];C^{2,\beta})$ and $g_n\in C([0,T];C^{1,\beta})$. Choose $T=T(m,\gamma,\inf P_0,M)>0$ sufficiently small that, for $0\le t\le T$, the right-hand sides of \eqref{ineq:P_nbnd} and \eqref{ineq:g_nbnd} are at most $2M$. This closes the induction and yields, uniformly in $n$,
\begin{equation}
\label{ineq:iter-uniform-holder}
    \sup_{0\le t\le T}
    \left(\|P_n(t)\|_{C^{2,\gamma}}+\|g_n(t)\|_{C^{1,\gamma}}
    +\|G_n(t)\|_{C^{2,\gamma}}\right)\le C_M.
\end{equation}

The equations \eqref{eqn:iter:Pn} and \eqref{eqn:iter:gn} also imply
\[
    \sup_{0\le t\le T}\|\partial_tP_n(t)\|_{C^{1,\gamma}}
    +\sup_{0\le t\le T}\|\partial_tg_n(t)\|_{C^{0,\gamma}}
    \le C_M.
\]
Combining these bounds with \eqref{ineq:iter-uniform-holder}, standard interpolation gives equicontinuity of $P_n$ in $C^2$ and of $g_n$ in $C^1$. Hence, after passing to a subsequence, Arzel\`a--Ascoli gives functions $g,P$ such that
\begin{equation*}
   \begin{cases}
    g_n \to g &\text{in } C([0,T];C^1([0,\frac{\pi}{m}])),\\
    P_n \to P &\text{in } C([0,T];C^2([0,\frac{\pi}{m}])).
   \end{cases}
\end{equation*}
Let $G(t,\cdot)$ be the unique solution of
\[
    \partial_\theta(P\partial_\theta G)+4PG=g,
    \qquad G(t,0)=G(t,\pi/m)=0.
\]
Continuous dependence for this boundary value problem, together with the preceding convergence, yields
\[
    G_n\to G \qquad\text{in } C([0,T];C^2([0,\tfrac{\pi}{m}])).
\]
We may now pass to the limit in
\begin{equation*}
    \begin{cases}
    g_n(t,\theta)=g_0(\theta)-\displaystyle\int_0^t
    \left[2G_n\partial_\theta g_n
    +\partial_\theta P_{n-1}(4G_n^2+(\partial_\theta G_n)^2)\right](s,\theta)\,\mathrm{d}s,\\[1ex]
    P_n(t,\theta)=P_0(\theta)-\displaystyle\int_0^t
    2G_n\partial_\theta P_n(s,\theta)\,\mathrm{d}s,\\[1ex]
    \partial_\theta(P_{n-1}\partial_\theta G_n)+4P_{n-1}G_n=g_{n-1},
    \end{cases}
\end{equation*}
to obtain
\begin{equation*}
    \begin{cases}
    g(t,\theta)=g_0(\theta)-\displaystyle\int_0^t
    \left[2G\partial_\theta g
    +\partial_\theta P(4G^2+(\partial_\theta G)^2)\right](s,\theta)\,\mathrm{d}s,\\[1ex]
    P(t,\theta)=P_0(\theta)-\displaystyle\int_0^t
    2G\partial_\theta P(s,\theta)\,\mathrm{d}s,\\[1ex]
    \partial_\theta(P\partial_\theta G)+4PG=g.
    \end{cases}
\end{equation*}
The uniform bounds \eqref{ineq:iter-uniform-holder} pass to the limit, so $g\in L^\infty(0,T;C^{1,\gamma})$ and $P\in L^\infty(0,T;C^{2,\gamma})$. Since $g\in C([0,T];C^1)$ and $P\in C([0,T];C^2)$, interpolation with these uniform endpoint bounds yields, for every $0<\beta<\gamma$,
\[
g\in C([0,T];C^{1,\beta}),\qquad P\in C([0,T];C^{2,\beta}).
\]
The equations also imply $g\in C^1([0,T];C^{0,\beta})$ and $P\in C^1([0,T];C^{1,\beta})$ for every $0<\beta<\gamma$.

Note that $g(t,0)=g(t,\frac{\pi}{m})=0$, $\partial_\theta P(t,0)=\partial_\theta P(t,\frac{\pi}{m})=0$, and $G(t,0)=G(t,\frac{\pi}{m})=0$. Finally extend $g(t,\theta)$ and $G(t,\theta)$ oddly to $\theta\in[-\frac{\pi}{m},\frac{\pi}{m}]$ and then periodically to $\mathbb S^1$; similarly, extend $P$ evenly and periodically. This concludes the proof of local existence.
\end{proof}
\subsubsection{Continuation criterion}
\label{sec:continuation-crit}
Let $T_*$ denote the maximal time of existence of the solution given by \Cref{prop:1dsys:lwp}. If the endpoint $C^{2,\gamma}\times C^{1,\gamma}$ norms remained bounded as $t\uparrow T_*$, the lower-exponent time continuity would give a terminal limit, the uniform endpoint bounds would pass to that limit, and \Cref{prop:1dsys:lwp} would restart the solution. Hence, if $T_*<+\infty$,
\[
\limsup_{t\to T_*^-}\max\{\|P(t,\cdot)\|_{C^{2,\gamma}},\|g(t,\cdot)\|_{C^{1,\gamma}}\}=+\infty.
\] As mentioned previously, control of $\|g_+(t,\cdot)\|_{L^1(\mathbb{S}^1)}$ yields control of $\max\{\|P(t,\cdot)\|_{{C}^{2, \gamma}}, \|g(t,\cdot)\|_{{C}^{1, \gamma}}\}$, so that if $T_* < +\infty$, we can also assert that
\begin{equation*}
    \limsup_{t \to T_*^{-}} \|g_+(t, \cdot)\|_{L^1(\mathbb{S}^1)} = +\infty.
\end{equation*}
The \textit{a priori} estimate of \Cref{prop:apriori-gL1} applies to the local solution from \Cref{prop:1dsys:lwp}. Since all the other quantities in that estimate remain finite on a finite time interval, if $T_*<+\infty$ then necessarily
\begin{equation}
\label{eqn:continuation-crit1}
\limsup_{t\to T^-_*}\mathcal I_b^+(t)= \limsup_{t\to T^-_*}\int_0^t \frac{1}{b(s,\frac{\pi}{m})}\int_0^{\frac{\pi}{m}} g_+(s,\theta)b(s,\theta)\,\mathrm{d}\theta\,\mathrm{d}s=+\infty.
\end{equation}
\begin{remark}
    Despite the criterion, the non-local nature of the system \eqref{eqn:1dsys} makes it difficult to verify that \eqref{eqn:continuation-crit1} holds under just hypotheses \eqref{H1} and \eqref{H2}. In fact, the system \eqref{eqn:1dsys} hides a Riccati structure. Denote $\mathcal{L}_P[G]:= \partial_\theta(P\partial_\theta G) + 4PG$, where $G$ satisfies periodic boundary conditions. Then under the assumptions of \Cref{sec:bvp}, we can define $\mathcal{L}_P^{-1}$. It can be shown that (formally)
\begin{equation}
\label{eqn:ricatti-structure}
    \partial_t(\partial_\theta G) + 2G\partial_\theta(\partial_\theta G) = (\partial_\theta G)^2 - 4\partial_\theta \mathcal{L}_P^{-1}[3P\partial_\theta^{-1}(\partial_\theta G)^2 + G^2\partial_\theta P].
\end{equation}
Thus the Riccati term $(\partial_\theta G)^2$ poses a potential obstruction to global existence, unless it can be shown that the term $4\partial_\theta \mathcal{L}_P^{-1}[3P\partial_\theta^{-1}(\partial_\theta G)^2 + G^2\partial_\theta P]$ dominates.
\end{remark}

\subsubsection{Proof of \Cref{thm:gwp-1dsys} - Global existence for admissible data}
\label{sec:gwppf}
We next turn to the task of obtaining control on $\mathcal I_b^+(t)$ under the assumption of admissibility on the initial data (\eqref{H1}-\eqref{H3}). By \Cref{eqn:continuation-crit1}, this will imply \Cref{thm:gwp-1dsys}. We first note two key estimates.

\bigskip
\noindent
\textbf{Bound on the homogeneous arc.} Temporarily denote $\gamma(t) := \phi(t, 0, \frac{\pi}{m}-\ve)$. 
Recall that 
 \begin{equation*}
            G(t, \theta) = \int_0^{\frac{\pi}{m}} H_m^t(\theta,\zeta) g(t, \zeta) \ \mathrm{d}\zeta
\end{equation*}
where $H_m^t(\cdot, \cdot)$ denotes the Green's kernel for \eqref{eqn:reduced-bvp} when $P=P(t, \cdot)$. We decompose $G$ as
\begin{equation*}
    \begin{aligned}
         G(t, \theta) &= \int_0^{\frac{\pi}{m}} H_m^t(\theta,\zeta) g_+(t, \zeta) \ \mathrm{d}\zeta - \int_0^{\frac{\pi}{m}} H_m^t(\theta,\zeta) g_-(t, \zeta) \ \mathrm{d}\zeta \\
         &:= G_l(t,\theta) +  G_r(t,\theta) \\
         &\le G_r(t,\theta),
    \end{aligned}
\end{equation*}
noting that $G_l(t,\theta)  \le 0$ and $G_r(t,\theta) \ge 0$. This implies that
\begin{equation}
\label{ineq:gamma:diff}
    \begin{aligned}
        \gamma'(t) &=  2G(t, \gamma(t)) \\
        &\le 2G_r(t, \gamma(t))\\
        &\le \frac{\pi^2\alpha^2}{mP_0(0)}\|g_0\|_\infty\left(\frac{\pi}{m}-\gamma(t)\right)
    \end{aligned}
\end{equation}
where the last inequality is justified by estimating as in (ii) in \Cref{prop:apriori-ests1},
\begin{equation*}
    \begin{aligned}
        |G_r(t, \theta)| = |G_r(t,\theta) - G(t, \pi/m)| &\le \|\partial_\theta G_r(t, \cdot) \|_\infty \left(\frac{\pi}{m}-\theta\right)\\
        &\le \frac{\pi\alpha^2}{2P_0(0)}\|g_-(t, \cdot)\|_{L^1}\left(\frac{\pi}{m}-\theta\right) \\
        &\le \frac{\pi^2\alpha^2}{2mP_0(0)}\|g_0\|_\infty\left(\frac{\pi}{m}-\theta\right).
    \end{aligned}
\end{equation*}
We have invoked $g_- \le \|g_0\|_\infty$ as well. 
Hence from \eqref{ineq:gamma:diff},
\begin{equation}
\label{ineq:gamma}
    0 < \gamma(t) \le \frac{\pi}{m}-\varepsilon e^{-\frac{\pi^2\alpha^2}{mP_0(0)}\|g_0\|_\infty t}
\end{equation}

Fix $T < T_*$. By hypothesis \eqref{H3}, $P_0' \equiv 0$ on $[\frac{\pi}{m}-\varepsilon , \frac{\pi}{m}]$. Since the flow is order preserving and $\frac{\pi}{m}$ is a fixed characteristic, the image of this interval at time $t$ is exactly $[\gamma(t),\frac{\pi}{m}]$. Hence
\begin{equation*}
\partial_\theta P(t,\theta)=0,\qquad \gamma(t)\le \theta\le \frac{\pi}{m}.
\end{equation*}
On this material interval the equation for $g$ reduces to the homogeneous transport equation. Consequently,
\begin{equation}
\label{ineq:g:hom:arc}
|g(t,\theta)|\le \|g_0\|_\infty,\qquad \gamma(t)\le \theta\le \frac{\pi}{m}.
\end{equation}

\bigskip
\noindent
\textbf{Bound on $G$ along characteristics.} Next, we observe the following bound obtained by integrating \eqref{eqn:flow-map}
\begin{equation*}
   \phi(t, 0, \theta) = \theta + \int_0^t 2G(s, \phi(s,0,\theta)) \ \mathrm{d}s
\end{equation*}
for $\theta \in [0, \frac{\pi}{m}]$.
Since $G(t, 0) = G(t, \frac{\pi}{m}) = 0$ for all $t$, all particle trajectories are constrained to $[0, \frac{\pi}{m}]$. Hence $0 \le \phi(t, 0, \theta) \le \frac{\pi}{m}$ and
\begin{equation*}
   0\le \theta + \int_0^t 2G(s, \phi(s,0,\theta)) \ \mathrm{d}s \le \frac{\pi}{m}
\end{equation*}
Hence 
\begin{equation}
\label{ineq:apriori-G-int}
   -\frac{\pi}{m}\le \int_0^t 2G(s, \phi(s,0,\theta)) \ \mathrm{d}s \le \frac{\pi}{m}
\end{equation}

The next proposition shows that if \eqref{H1}-\eqref{H3} hold, then the unique solution of \Cref{prop:1dsys:lwp} satisfies
\begin{equation*}
    \lim\sup_{t \to T_*^{-}} \|g(t, \cdot)\|_{L^1} < \infty,
\end{equation*}
where $T_*$ is its maximal existence time. It follows that $T_* = + \infty$.
\begin{proposition}
\label{prop:global-b-control}
Let $(g,P)$ be the maximal smooth solution to \eqref{eqn:1dsys} with admissible initial data $(g_0,P_0)$, defined on $[0,T_*)$. Then $\mathcal I_b^+(t)<\infty$ for each $0\le t<T_*$. Moreover, if $T_*<\infty$, then
\begin{equation*}
\limsup_{t\to T_*^-}\mathcal I_b^+(t)<\infty.
\end{equation*}
\end{proposition}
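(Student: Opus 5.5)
The plan is to express the inner quantity in $\mathcal I_b^+$ through the boundary slope $\partial_\theta G(s,\frac{\pi}{m})$. Its time integral measures how fast characteristics next to the fixed point $\frac{\pi}{m}$ are pulled toward it or pushed away. This rate is then controlled using only the facts that trajectories stay in $[0,\frac{\pi}{m}]$ and that, by \eqref{H3}, the arc $[\gamma(t),\frac{\pi}{m}]$ is a material region where $P$ is constant and $|g|\le\|g_0\|_\infty$.

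\textbf{Step 1 (reduction to the boundary slope).} By \eqref{eqn:pa-th-G(pi/m)},
\[
\frac{1}{b(s,\frac{\pi}{m})}\int_0^{\frac{\pi}{m}} g\,b\,\mathrm d\zeta=\frac{P_0(0)}{\alpha}\,\partial_\theta G(s,\tfrac{\pi}{m}).
\]
Since $0\le b\le b(s,\frac{\pi}{m})$ (Lemma~\ref{lem:b-c-prime}) and $g_-\le\|g_0\|_\infty$, we get
\[
\frac{1}{b(s,\frac{\pi}{m})}\int g_+b\le \frac{P_0(0)}{\alpha}\,\partial_\theta G(s,\tfrac{\pi}{m})+\frac{\pi}{m}\|g_0\|_\infty .
\]
So it suffices to bound $\int_0^t\partial_\theta G(s,\frac{\pi}{m})\,\mathrm ds$ from above by $C(1+t)$, with $C$ depending only on the data and $\varepsilon$. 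The same formula, split into $g_\pm$ and combined with $b>0$, also gives a one-sided bound $\partial_\theta G(s,\frac{\pi}{m})\ge -C\|g_0\|_\infty$. This holds because the $g_+$ contribution is nonnegative and the $g_-$ contribution is controlled by $\|g_-\|_{L^1}$ via Lemma~\ref{prop:greens:bounds}.

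\textbf{Step 2 (structure of $G$ on the homogeneous arc).} On $[\gamma(s),\frac{\pi}{m}]$ we have $\partial_\theta P=0$ and $P\equiv P_0(\frac{\pi}{m})$. Hence $G''+4G=g/P_0(\frac{\pi}{m})$ there, with $G(\frac{\pi}{m})=0$ and a forcing bounded by $\alpha\|g_0\|_\infty/P_0(0)$ by \eqref{ineq:g:hom:arc}. Variation of parameters from the endpoint (as in Lemma~\ref{lem:elementary-comparison}) gives, with $x=\frac{\pi}{m}-\theta$,
\[
\partial_\theta G(s,\theta)=\partial_\theta G(s,\tfrac{\pi}{m})\cos(2x)+O(\|g_0\|_\infty x).
\]
Let $c_*:=\cos(2\pi/m)>0$, which is positive since $m>4$. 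Using the one-sided lower bound from Step 1 to handle the case $\partial_\theta G(s,\frac{\pi}{m})<0$, I would deduce, for all $\theta$ in the arc,
\[
\partial_\theta G(s,\theta)\ge c_*\,\partial_\theta G(s,\tfrac{\pi}{m})-C.
\]

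\textbf{Step 3 (trapping argument).} Let $\theta_0\in[\frac{\pi}{m}-\varepsilon,\frac{\pi}{m}]$. The trajectory $\phi(s,0,\theta_0)$ remains in the arc for all $s$. We have $\partial_\theta\phi(t,0,\theta_0)=\exp\big(2\int_0^t\partial_\theta G(s,\phi(s,0,\theta_0))\,\mathrm ds\big)$. Integrating in $\theta_0$ then gives
\[
\frac{\pi}{m}\ \ge\ \frac{\pi}{m}-\gamma(t)=\int_{\frac{\pi}{m}-\varepsilon}^{\frac{\pi}{m}}\partial_\theta\phi(t,0,\theta_0)\,\mathrm d\theta_0\ \ge\ \varepsilon\exp\Big(2c_*\int_0^t\partial_\theta G(s,\tfrac{\pi}{m})\,\mathrm ds-2Ct\Big).
\]
Hence $\int_0^t\partial_\theta G(s,\frac{\pi}{m})\,\mathrm ds\le (2c_*)^{-1}\big(\log\frac{\pi}{m\varepsilon}+2Ct\big)$. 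Inserting this into Step 1 yields $\mathcal I_b^+(t)\le C(1+t)$ with $C$ independent of $t<T_*$. This gives both finiteness and a finite $\limsup$ when $T_*<\infty$.

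The main obstacle is Step 2. The uniform comparison of $\partial_\theta G$ along the whole arc with its endpoint value must use only $\|g_0\|_\infty$ and not $\|g\|_{L^1}$, which is the very quantity we are trying to control. This is where the constancy of $P$ on the arc (from \eqref{H3}) and the bound $\frac{\pi}{m}<\frac{\pi}{4}$ (which keeps $\cos(2x)$ positive) are essential. The possible negativity of $\partial_\theta G(s,\frac{\pi}{m})$ must also be handled through the $g_-$ kernel bound.
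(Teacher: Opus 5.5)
Your argument is correct, and it takes a genuinely different route from the paper.

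\textbf{The paper's route.} The paper works at the inner edge $\gamma(t)=\phi(t,0,\frac{\pi}{m}-\varepsilon)$ of the homogeneous arc. It writes $G(t,\gamma(t))$ through the Green's kernel \eqref{eqn:greens:reduced}, bounds the displacement $\int_0^t G(s,\gamma(s))\,\mathrm{d}s$ by trapping, and uses the sign $I_+\le 0$ to isolate $\int_0^{\gamma(s)}g_+b$ with weight $|c_m(s,\gamma(s))|$. It then removes that weight using the lower bound $\frac{\pi}{m}-\gamma(s)\ge\varepsilon e^{-Cs}$ from \eqref{ineq:gamma} together with \Cref{lem:b-c-ptwise}. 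The arc itself is handled by \eqref{ineq:g:hom:arc}.

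\textbf{Your route.} You instead use \eqref{eqn:pa-th-G(pi/m)} to turn the integrand of $\mathcal I_b^+$ into the boundary strain $\partial_\theta G(s,\frac{\pi}{m})$, up to a correction of size $\frac{\pi}{m}\|g_0\|_\infty$ coming from $g_-$. You then use the constant-coefficient equation $G''+4G=g/P_0(\frac{\pi}{m})$ on the material arc to compare $\partial_\theta G$ along the arc with its endpoint value. Your variation-of-parameters comparison checks out: the error is bounded by $\alpha\|g_0\|_\infty(\frac{\pi}{m}-\theta)/P_0(0)$, and $\cos(2\pi/m)>0$ because $m\ge 5$. Finally, you control the time-integrated strain by the length of the material arc, which is at most $\frac{\pi}{m}$, rather than by the displacement of its edge.

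\textbf{What each buys.} Your route avoids \eqref{ineq:gamma} and the interior pointwise bounds on $c_m$. It also yields a sharper estimate, $\mathcal I_b^+(t)\lesssim 1+\log\frac{\pi}{m\varepsilon}+t$, compared with the paper's bound of order $\varepsilon^{-1}(1+t)e^{Ct}$. The paper's route uses \eqref{H3} only to bound $g$ on the arc and otherwise relies on the sign structure of the Green's kernel at an interior point. That displacement-plus-kernel mechanism is the one it reuses in \Cref{sec:long-time}. Yours uses the constancy of $P$ a second time, to obtain the explicit constant-coefficient comparison, and it needs the Jacobian formula for $\partial_\theta\phi$, which is harmless for the smooth solution considered here.
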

\begin{proof}
 By \eqref{eqn:greens:reduced}, we may write
\begin{equation*}
    G(t, \gamma(t)) = \frac{1}{P_0(0)|c_m(t, 0)|} \left(c_m(t,\gamma(t))\int_0^{\gamma(t)} g(t,\zeta)b(t,\zeta)\ \mathrm{d}\zeta + b(t,\gamma(t))\int_{\gamma(t)}^{\frac{\pi}{m}} g(t,\zeta)c_m(t,\zeta)\ \mathrm{d}\zeta \right)
\end{equation*}
and split $g= g_+ - g_{-}$ to replace the right-hand side above by
\begin{equation*}
    \begin{aligned}
            &\frac{1}{P_0(0)|c_m(t, 0)|} \left(c_m(t,\gamma(t))\int_0^{\gamma(t)} g_+(t,\zeta)b(t,\zeta)\ \mathrm{d}\zeta + b(t,\gamma(t))\int_{\gamma(t)}^{\frac{\pi}{m}} g_+(t,\zeta)c_m(t,\zeta)\ \mathrm{d}\zeta \right) \\
            &-\frac{1}{P_0(0)|c_m(t, 0)|} \left(c_m(t,\gamma(t))\int_0^{\gamma(t)} g_-(t,\zeta)b(t,\zeta)\ \mathrm{d}\zeta + b(t,\gamma(t))\int_{\gamma(t)}^{\frac{\pi}{m}} g_-(t,\zeta)c_m(t,\zeta)\ \mathrm{d}\zeta \right) \\
            &=: I_+(t) - I_-(t)
    \end{aligned}
\end{equation*}
By \eqref{ineq:apriori-G-int} and $g_- \le \|g_0\|_\infty$, we have
\begin{equation*}
\begin{aligned}
       \left|\int_0^t I_+(s)\ \mathrm{d}s\right| &=  \left|\int_0^t I_-(s)\ \mathrm{d}s+\int_0^t G(s, \gamma(s))\ \mathrm{d}s\right| \\
       &\le\left|\int_0^t I_-(s)\ \mathrm{d}s\right|+\left|\int_0^t G(s, \gamma(s))\ \mathrm{d}s\right| \\
       &\le\left|\int_0^t I_-(s)\ \mathrm{d}s\right|+\frac{\pi}{m} \\
       &\le C\|g_0\|_\infty t+\frac{\pi}{m}
\end{aligned}
\end{equation*}
where $C$ depends only on $m, \alpha, P_0(0)$. 

Observe from the signs of $b$ and $c_m$ in \Cref{lemma:abd-props} that $I_+(s)\le0$. Thus

\begin{equation}
\label{eqn:abs-G}
\begin{aligned}
 \left| I_+(s)\right| &= \frac{1}{P_0(0)|c_m(s, 0)|} \Big(|c_m(s,\gamma(s))|\int_0^{\gamma(s)} g_+(s,\zeta)b(s,\zeta)\ \mathrm{d}\zeta\\
 &+ b(s,\gamma(s))\int_{\gamma(s)}^{\frac{\pi}{m}} g_+(s,\zeta)|c_m(s,\zeta)|\ \mathrm{d}\zeta \Big)
 \end{aligned}
\end{equation}

It follows in particular from \eqref{eqn:abs-G} that
\begin{equation}
\label{ineq:g-proof-est2}
    \int_0^t \frac{|c_m(s,\gamma(s))|}{P_0(0)|c_m(s,0)|}\int_0^{\gamma(s)} g_+(s,\zeta)b(s,\zeta)\ \mathrm{d}\zeta\ \mathrm{d}s \le C\|g_0\|_\infty t+\frac{\pi}{m}
\end{equation}
Now by \Cref{lem:b-c-ptwise} and \eqref{ineq:gamma},
\begin{align*}
|c_m(s,\gamma(s))|
&\ge \frac{1}{2\alpha}\sin\left(2\left(\frac{\pi}{m}-\gamma(s)\right)\right)\\
&\ge \frac{2}{\pi\alpha}\left(\frac{\pi}{m}-\gamma(s)\right)\\
&\ge \frac{2\varepsilon}{\pi\alpha}
 e^{-\frac{\pi^2\alpha^2}{mP_0(0)}\|g_0\|_\infty s}.
\end{align*}
Returning to \eqref{ineq:g-proof-est2}, we obtain
\begin{equation}
\label{ineq:g-proof-est3}
\int_0^t \frac{1}{P_0(0)|c_m(s,0)|}\int_0^{\gamma(s)} g_+(s,\zeta)b(s,\zeta)\,\mathrm{d}\zeta\,\mathrm{d}s
\le \frac{\pi\alpha}{2\varepsilon}\left(C\|g_0\|_\infty t+\frac{\pi}{m}\right)
 e^{\frac{\pi^2\alpha^2}{mP_0(0)}\|g_0\|_\infty t}.
\end{equation}
Since $\alpha|c_m(s,0)|=b(s,\frac{\pi}{m})$, this is equivalent to
\begin{equation}
\label{ineq:g-proof-est4}
\int_0^t \frac{1}{P_0(0)b(s,\frac{\pi}{m})}\int_0^{\gamma(s)} g_+(s,\zeta)b(s,\zeta)\,\mathrm{d}\zeta\,\mathrm{d}s
\le \frac{\pi}{2\varepsilon}\left(C\|g_0\|_\infty t+\frac{\pi}{m}\right)
 e^{\frac{\pi^2\alpha^2}{mP_0(0)}\|g_0\|_\infty t}.
\end{equation}
On the homogeneous arc, we use $g(t,\theta) \le \|g_0\|_{L^\infty(\mathbb{S}^1)}$ (i.e. \eqref{ineq:g:hom:arc}):
\begin{equation}
\label{ineq:g-proof-est5}
    \int_0^t \frac{1}{P_0(0)|b(s,\frac{\pi}{m})|}\int_{\gamma(s)}^{\frac{\pi}{m}} g_+(s,\zeta)b(s,\zeta)\ \mathrm{d}\zeta\ \mathrm{d}s \le \frac{t}{P_0(0)} \frac{\pi}{m} \|g_0\|_{L^\infty(\mathbb{S}^1)}
\end{equation}
Combining \eqref{ineq:g-proof-est4} and \eqref{ineq:g-proof-est5}, and multiplying by $P_0(0)$, we conclude that
\begin{equation*}
\mathcal I_b^+(t)
\le \frac{\pi}{m}\|g_0\|_\infty t
+\frac{\pi P_0(0)}{2\varepsilon}\left(C\|g_0\|_\infty t+\frac{\pi}{m}\right)
 e^{\frac{\pi^2\alpha^2}{mP_0(0)}\|g_0\|_\infty t}<\infty.
\end{equation*}
The right-hand side is bounded uniformly for $0\le t<T_*$ whenever $T_*<\infty$, which gives the claimed bound on $\limsup_{t\to T_*^-}\mathcal I_b^+(t)$. This proves the proposition.
\end{proof}

\subsection{Homogenization and average relaxation in a special case}
\label{sec:long-time}
In this section, we mention an observation on the long-time behavior of $g(t, \theta)$ when the initial data $g_0$ is  signed. We will show below that either the system homogenizes, or $g$ relaxes to zero in a weak sense. Assume hence that $g_0(\theta) \ge 0$ and $P_0' \le 0$ on $[0, \frac{\pi}{m}]$ are admissible. Then $g$ satisfies
\begin{equation*}
    \partial_t g + 2G \partial_\theta g \ge 0
\end{equation*}
for all $t$, and thus $g(t, \theta) \ge 0$. By \Cref{prop:G-greens-formula}, $G(t,\theta) \le 0$ on $(0, \frac{\pi}{m})$ for all $t \ge 0$. This implies that particle trajectories are nonincreasing in time, and consequently
\begin{equation*}
    \phi_{\mathrm{final}}(\theta):=\lim_{t\to\infty} \phi(t,0,\theta)
\end{equation*}
exists for each $\theta \in [0, \frac{\pi}{m}]$. Either for all $\theta \in (0, \frac{\pi}{m})$, $\phi_{\mathrm{final}}(\theta) = 0$ (i.e. the system homogenizes and $P(t,\theta) \to P_0(\frac{\pi}{m})$ for each $\theta \in (0, \frac{\pi}{m}]$) or there exists $\theta_0 \in (0, \frac{\pi}{m})$ with $0<\phi_{\mathrm{final}}(\theta_0) < \frac{\pi}{m}$. In the latter case, we observe the following.

First, by integrating the flow map,
\begin{equation*}
   \phi(t, 0, \theta) = \theta + \int_0^t 2G(s, \phi(s,0,\theta)) \ \mathrm{d}s
\end{equation*}
for $\theta \in [0, \frac{\pi}{m}]$.
Since $G(t, 0) = G(t, \frac{\pi}{m}) = 0$ for all $t$, all particle trajectories are constrained to $[0, \frac{\pi}{m}]$. Hence $0 \le \phi(t, 0, \theta) \le \frac{\pi}{m}$ and
\begin{equation*}
   0\le \theta + \int_0^t 2G(s, \phi(s,0,\theta)) \ \mathrm{d}s \le \frac{\pi}{m}
\end{equation*}
Hence (recall $|G| = -G$) 
\begin{equation}
\label{ineq:apriori-G-int:signed-case}
    \int_0^t 2|G(s, \phi(s,0,\theta))| \ \mathrm{d}s \le \frac{\pi}{m}
\end{equation}
Denote temporarily $\gamma(t) = \phi(t,0,\theta_0)$. Since $-G = |G|$ and
\begin{equation*}
    G(t, \gamma(t)) = \frac{1}{P_0(0)|c_m(t, 0)|} \left(c_m(t,\gamma(t))\int_0^{\gamma(t)} g(t,\zeta)b(t,\zeta)\mathrm{d}\zeta + b(t,\gamma(t))\int_{\gamma(t)}^{\frac{\pi}{m}} g(t,\zeta)c_m(t,\zeta)\mathrm{d}\zeta \right),
\end{equation*}
we have
\begin{equation}
    |G(t, \gamma(t))| = \frac{1}{P_0(0)|c_m(t, 0)|} \Big(|c_m(t,\gamma(t))|\int_0^{\gamma(t)} gb\ \mathrm{d}\zeta + b(t,\gamma(t))\int_{\gamma(t)}^{\frac{\pi}{m}} g|c_m|\ \mathrm{d}\zeta \Big)
\end{equation}
and by \eqref{ineq:apriori-G-int:signed-case}, we see in particular that the inequalities
\begin{equation*}
    \begin{aligned}
        \int_0^t\frac{|c_m(s,\gamma(s))|}{|c_m(s, 0)|}\int_0^{\gamma(s)} g(s,\zeta)b(s,\zeta)\ \mathrm{d}\zeta \ \mathrm{d}s &\le \frac{\pi P_0(0)}{2m} \\
         \int_0^t\frac{b(s,\gamma(s))}{|c_m(s, 0)|}\int_{\gamma(s)}^{\frac{\pi}{m}} g(s,\zeta)|c_m(s,\zeta)|\ \mathrm{d}\zeta&\le \frac{\pi P_0(0)}{2m}
    \end{aligned}
\end{equation*}
hold for all $t$. By \Cref{lem:b-c-ptwise}, we have $|c_m(s,\theta)| \gtrsim (\frac{\pi}{m}-\theta)$ and $b(s,\theta) \gtrsim \theta$. Furthermore, since $0<\lim_{t\to\infty}\gamma(t) < \frac{\pi}{m}$ by assumption, we have $(\frac{\pi}{m}-\gamma(s)) \gtrsim 1$ and $\gamma(s)\gtrsim 1$ for all $s$ and so,
\begin{equation*}
    \begin{aligned}
        \int_0^t\int_0^{\gamma(s)} g(s,\zeta)\zeta\ \mathrm{d}\zeta \ \mathrm{d}s &\lesssim 1 \\
         \int_0^t\int_{\gamma(s)}^{\frac{\pi}{m}} g(s,\zeta)\left(\frac{\pi}{m}-\zeta\right)\ \mathrm{d}\zeta\ \mathrm{d}s &\lesssim 1
    \end{aligned}
\end{equation*}
with the implied constants depending only on fixed parameters. Let $0 < \delta \ll 1$ be sufficiently small. Then the above inequalities imply that
\begin{equation*}
    \begin{aligned}
         \int_0^t\int_{\delta}^{\frac{\pi}{m}-\delta} g(s,\zeta)\ \mathrm{d}\zeta \ \mathrm{d}s &= \int_0^t\int_{\delta}^{\gamma(s)} g(s,\zeta)\ \mathrm{d}\zeta \ \mathrm{d}s+\int_0^t\int_{\gamma(s)}^{\frac{\pi}{m}-\delta} g(s,\zeta)\ \mathrm{d}\zeta \ \mathrm{d}s \\
         &\lesssim \delta^{-1}\left( \int_0^t\int_0^{\gamma(s)} g(s,\zeta)\zeta\ \mathrm{d}\zeta \ \mathrm{d}s+  \int_0^t\int_{\gamma(s)}^{\frac{\pi}{m}} g(s,\zeta)\left(\frac{\pi}{m}-\zeta\right)\ \mathrm{d}\zeta\ \mathrm{d}s\right) \\
         &\lesssim \delta^{-1}
    \end{aligned}
\end{equation*}
Letting $t \to \infty$, we have
\begin{equation}
\label{ineq:relaxation}
       \int_0^\infty\int_{\delta}^{\frac{\pi}{m}-\delta} g(t,\theta)\ \mathrm{d}\theta \ \mathrm{d}t \lesssim \delta^{-1} < \infty.
\end{equation}
Thus \eqref{ineq:relaxation} says that if the system does not homogenize, then away from the boundaries, $g$ tends to zero on average in the sense that its $L^1$-norm is time integrable. Note that the foregoing argument works assuming only global-existence, positivity and hypotheses \eqref{H1} and \eqref{H2}. Thus we have shown the following proposition.
\begin{proposition}
Let $(g, P)$ be a global solution to \eqref{eqn:1dsys} with initial data $(g_0, P_0)$ satisfying \eqref{H1} and \eqref{H2}. Assume that $g_0 \ge 0$ and $P_0' \le 0$, so that $g \ge 0$ for all $t \ge 0$. Either 
\begin{equation*}
    P(t,\theta) \to P_0(\pi/m) = \inf P_0 \text{ for all } \theta \in (0, \pi/m]
\end{equation*}
or for each $\delta > 0$, sufficiently small,
\begin{equation*}
       \int_0^\infty\int_{\delta}^{\frac{\pi}{m}-\delta} g(t,\theta)\ \mathrm{d}\theta \ \mathrm{d}t  < \infty.
\end{equation*}
\end{proposition}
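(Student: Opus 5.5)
The argument reduces to a dichotomy on the final positions of the trajectories and then uses the characteristic integral bound to make $g$ time-integrable away from the endpoints. We first record that $g\ge0$ for all time. By \eqref{H2}, $\partial_\theta P\le0$ on $[0,\frac{\pi}{m}]$, so the source $-(4G^2+(\partial_\theta G)^2)\partial_\theta P$ is nonnegative there. The comparison principle along characteristics (which stay in $[0,\frac{\pi}{m}]$) then gives $g\ge0$. Since $H_m<0$, \Cref{prop:G-greens-formula} yields $G\le0$ on $(0,\frac{\pi}{m})$. Hence every trajectory $t\mapsto\phi(t,0,\theta)$ is nonincreasing and bounded below by $0$, so $\phi_{\mathrm{final}}(\theta)=\lim_{t\to\infty}\phi(t,0,\theta)$ exists. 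Since $\phi(t,0,\cdot)$ is an increasing homeomorphism fixing the endpoints, $\phi_{\mathrm{final}}$ is nondecreasing in $\theta$.

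If $\phi_{\mathrm{final}}\equiv 0$ on $(0,\frac{\pi}{m})$, we are in the homogenization alternative. To verify it, fix $\theta\in(0,\frac{\pi}{m}]$. The backward characteristic $\phi(0,t,\theta)$ increases in $t$, and we claim it tends to $\frac{\pi}{m}$. Otherwise it would converge to some $\theta'<\frac{\pi}{m}$, and every $\theta''\in(\theta',\frac{\pi}{m})$ would satisfy $\phi(t,0,\theta'')\ge\theta$ for all $t$, contradicting $\phi_{\mathrm{final}}(\theta'')=0$. Continuity of $P_0$ then gives $P(t,\theta)=P_0(\phi(0,t,\theta))\to P_0(\frac{\pi}{m})=\inf P_0$ by \Cref{prop:apriori-ests1}(i).

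Otherwise some $\theta_0$ has $\phi_{\mathrm{final}}(\theta_0)\in(0,\frac{\pi}{m})$; if $\phi_{\mathrm{final}}$ is nonzero somewhere, we pick $\theta_0$ where it is positive. It is automatically $<\frac{\pi}{m}$, because trajectories decrease. Set $\gamma(t)=\phi(t,0,\theta_0)$; it is confined to a compact subinterval $[c,\frac{\pi}{m}-c]$ for all $t$. From \eqref{ineq:apriori-G-int:signed-case}, $\int_0^\infty|G(s,\gamma(s))|\,\mathrm{d}s\le\frac{\pi}{2m}$.

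Next we expand $|G(s,\gamma(s))|$ with the Green's kernel \eqref{eqn:greens:reduced}. With $g\ge0$ and the signs from \Cref{lemma:abd-props}, this is a sum of two nonnegative terms, involving $\int_0^{\gamma}gb$ and $\int_\gamma^{\pi/m}g|c_m|$ respectively. Each term is therefore time-integrable. The lower bounds of \Cref{lem:b-c-ptwise} give $|c_m(s,\gamma(s))|,\,b(s,\gamma(s))\gtrsim c$, together with $b(s,\zeta)\gtrsim\zeta$ and $|c_m(s,\zeta)|\gtrsim\frac{\pi}{m}-\zeta$. The upper bound $|c_m(s,0)|\le\frac12\sin(2\pi/m)$ holds uniformly in $s$, since the constants depend only on $\alpha$ and $P_0(0)$, which are conserved. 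These yield
\[
\int_0^\infty\!\!\int_0^{\gamma(s)}g\,\zeta\,\mathrm{d}\zeta\,\mathrm{d}s+\int_0^\infty\!\!\int_{\gamma(s)}^{\frac{\pi}{m}}g\left(\tfrac{\pi}{m}-\zeta\right)\mathrm{d}\zeta\,\mathrm{d}s\lesssim1.
\]

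On $[\delta,\frac{\pi}{m}-\delta]$ the weights $\zeta$ and $\frac{\pi}{m}-\zeta$ are both $\ge\delta$. Splitting at $\gamma(s)$ (when $\gamma(s)<\delta$ one of the pieces is simply empty) then gives $\int_0^\infty\int_\delta^{\frac{\pi}{m}-\delta}g\lesssim\delta^{-1}$, which is the relaxation alternative.

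The main obstacle is the uniformity in time of the kernel constants, since $b$ and $c_m$ depend on $P(t)$. This is handled because \Cref{lem:b-c-ptwise} depends only on the conserved quantities $\alpha$ and $P_0(0)$, and because \eqref{H1}, \eqref{H2} and the monotonicity of $P$ are propagated.
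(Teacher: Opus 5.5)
Your proposal is correct and follows essentially the same route as the paper. The dichotomy comes from the monotone limit $\phi_{\mathrm{final}}$. In the non-homogenizing case, the bound $\int_0^\infty 2|G(s,\gamma(s))|\,\mathrm{d}s\le\frac{\pi}{m}$, the Green's kernel splitting of $|G(s,\gamma(s))|$ into two nonnegative pieces, and the pointwise bounds of \Cref{lem:b-c-ptwise} give the weighted time-integrability and then the $\delta^{-1}$ estimate. You also correctly supply two details the paper leaves implicit: the backward-characteristic argument that $P(t,\theta)\to P_0(\frac{\pi}{m})$ when $\phi_{\mathrm{final}}\equiv0$, and the time-uniform upper bound on $|c_m(s,0)|$ that controls the kernel prefactor.
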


\section*{Acknowledgments}
The author thanks Tarek Elgindi immensely for introducing him to this topic. This work is a product of numerous useful discussions with him. The relevance of the theory of quasiconformal mappings and the reference \cite{astala2008elliptic} was found in conversation with Gemini Pro. We used ChatGPT 5.6 Sol for routine editorial work on an earlier human-generated draft of this manuscript, and to generate the figures.  

\appendix
\section{Some calculus inequalities in $\mathring{C}^{k,\gamma}(\mathbb{R}^2)$}
\label{app:a}
\begin{lemma}[Product rule]
\label{lem:prod-rule}
   Let $f, g\in \mathring{C}^{0, \gamma}(\mathbb{R}^2)$, $0 < \gamma \le 1$. Then $fg \in \mathring{C}^{0,\gamma}(\mathbb{R}^2)$and 
    \begin{equation*}
        \|fg\|_{\mathring{C}^{0, \gamma}} \le C\|f\|_{\mathring{C}^{0, \gamma}}\|g\|_{\mathring{C}^{0, \gamma}}
    \end{equation*}
\end{lemma}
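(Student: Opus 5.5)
The plan is to split the $\mathring{C}^{0,\gamma}$ norm of $fg$ into its two pieces and bound each directly. The $L^\infty$ part is immediate: $\|fg\|_{L^\infty}\le \|f\|_{L^\infty}\|g\|_{L^\infty}$. For the seminorm, I would not use the definition \eqref{def:angular-Holder} directly. Instead I would use the equivalent norm quoted before the proof of \Cref{thm:elliptic-existence} (see \eqref{eqn:C-ring-norm-eq}),
\[
\|f\|_{L^\infty}+\sup_{x\ne y}\min\{|x|^\gamma,|y|^\gamma\}\frac{|f(x)-f(y)|}{|x-y|^\gamma}.
\]
The point is that this seminorm behaves like a standard Hölder seminorm with a weight that does not have to be distributed between factors.

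With the equivalent norm, the estimate follows from the usual splitting
\[
f(x)g(x)-f(y)g(y)=\big(f(x)-f(y)\big)g(x)+f(y)\big(g(x)-g(y)\big).
\]
Multiplying by $\min\{|x|^\gamma,|y|^\gamma\}|x-y|^{-\gamma}$ gives
\[
\min\{|x|^\gamma,|y|^\gamma\}\frac{|fg(x)-fg(y)|}{|x-y|^\gamma}\le \|g\|_{L^\infty}[f]'+\|f\|_{L^\infty}[g]',
\]
where $[\cdot]'$ denotes the weighted seminorm above. This is bounded by $2\|f\|'\|g\|'$. By norm equivalence, $\|fg\|_{\mathring{C}^{0,\gamma}}\le C\|f\|_{\mathring{C}^{0,\gamma}}\|g\|_{\mathring{C}^{0,\gamma}}$ with $C$ depending only on $\gamma$. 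For $\gamma=1$ the norm \eqref{def:angular-Lip} is $\|f\|_{L^\infty}+\||x|\nabla f\|_{L^\infty}$, and the Leibniz rule $|x|\nabla(fg)=g\,|x|\nabla f+f\,|x|\nabla g$ gives the bound with $C=1$.

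The only real content, and the main obstacle, is the norm equivalence \eqref{eqn:C-ring-norm-eq}, in case it has to be proved here rather than assumed. I would prove it as follows, with $0<\gamma<1$.

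\emph{Upper bound for the weighted quotient.} Assume without loss of generality that $|y|\le |x|$. Write
\[
|y|^\gamma\big(f(x)-f(y)\big)=\big(|x|^\gamma f(x)-|y|^\gamma f(y)\big)-\big(|x|^\gamma-|y|^\gamma\big)f(x).
\]
Since $\big||x|^\gamma-|y|^\gamma\big|\le |x-y|^\gamma$, this gives $\min\{\cdot\}\,|f(x)-f(y)|/|x-y|^\gamma\le [|\cdot|^\gamma f]_{C^\gamma}+\|f\|_{L^\infty}$.

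\emph{Converse.} Use the same identity in reverse:
\[
|x|^\gamma f(x)-|y|^\gamma f(y)=|y|^\gamma\big(f(x)-f(y)\big)+\big(|x|^\gamma-|y|^\gamma\big)f(x).
\]
The first term is controlled by the weighted seminorm and the second by $\|f\|_{L^\infty}|x-y|^\gamma$. Both directions are elementary, so the whole lemma reduces to the Leibniz splitting once the equivalence is in hand.
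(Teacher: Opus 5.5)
Your proof is correct, and it differs from the paper's only in how the bookkeeping is organized.

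The paper works directly with the definition \eqref{def:angular-Holder}. For $|x|>|y|$ it writes $|x|^\gamma f(x)g(x)-|y|^\gamma f(y)g(y)$ as the three-term sum
\[
g(x)\bigl(|x|^\gamma f(x)-|y|^\gamma f(y)\bigr)+\bigl(|y|^\gamma-|x|^\gamma\bigr)g(x)f(y)+f(y)\bigl(|x|^\gamma g(x)-|y|^\gamma g(y)\bigr).
\]
This way the weight is carried by one factor at a time, and the middle term is absorbed using $\bigl||x|^\gamma-|y|^\gamma\bigr|\le C_\gamma|x-y|^\gamma$.

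You instead pass first to the equivalent $\min$-weighted seminorm of \eqref{eqn:C-ring-norm-eq}. There the weight is symmetric in $x,y$ and need not be distributed, so the product rule reduces to the classical two-term Leibniz splitting. The same estimate on $\bigl||x|^\gamma-|y|^\gamma\bigr|$ then appears only in your proof of the norm equivalence. That proof is essentially the paper's own appendix argument for \eqref{eqn:C-ring-norm-eq}, and it does not use the product rule, so there is no circularity.

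What each route buys:
\begin{itemize}
\item The paper's version is a single self-contained computation with a slightly better constant, since nothing is lost to the equivalence constants.
\item Yours isolates the only non-classical ingredient in the equivalence lemma. This is the same device the paper uses for \Cref{lem:composition}.
\item Yours also actually treats the case $\gamma=1$, which the paper leaves to the reader, via $|x|\nabla(fg)=g\,|x|\nabla f+f\,|x|\nabla g$.
\end{itemize}
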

\begin{proof}
We treat the case $\gamma < 1$, leaving $\gamma = 1$ for the reader.
    Trivially, $\|fg\|_{L^\infty} \le \|f\|_{L^\infty}\|g\|_{L^\infty}$. Next, we assume without loss of generality that $|x| > |y|$. Then
    \begin{equation*}
       |x|^\gamma f(x)g(x) - |y|^\gamma f(y)g(y) = g(x)(|x|^\gamma f(x) - |y|^\gamma f(y)) + (|y|^\gamma -|x|^\gamma)g(x)f(y) +f(y)(|x|^\gamma g(x) - |y|^\gamma g(y))
    \end{equation*}
    Hence since $\left|\frac{|y|^\gamma -|x|^\gamma}{|x-y|^\gamma}\right| \le C_\gamma$, we have
    \begin{equation*}
        \left|\frac{|x|^\gamma f(x)g(x) - |y|^\gamma f(y)g(y)}{|x-y|^\gamma}\right| \le \|g\|_{L^\infty}[f]_{\mathring{C}^{0,\gamma}} + C_\gamma\|f\|_{L^\infty}\|g\|_{L^\infty} +  \|f\|_{L^\infty}[g]_{\mathring{C}^{0,\gamma}}
    \end{equation*}
    The conclusion is now immediate, in view of \eqref{def:angular-Holder}.
\end{proof}

Let us also note the following useful equivalence.
\begin{lemma}
    Let $\gamma \in (0, 1]$ and let $f \in \mathring{C}^{0, \gamma}(\mathbb{R}^2)$. Then
        \begin{equation}
    \label{eqn:C-ring-norm-eq}
               \frac{1}{2}\|f\|_{\mathring{C}^{0,\gamma}} \le  \|f\|_{L^\infty} + \sup_{x\ne y} \min\{|x|^\gamma, |y|^\gamma\}\frac{|f(x)-f(y)|}{|x-y|^\gamma} \le 2\|f\|_{\mathring{C}^{0,\gamma}}
    \end{equation}
    and
    \begin{equation*}
       \frac{1}{2}\|f\|_{\mathring{C}^{0,\gamma}} \le  \|f\|_{L^\infty} + \sup_{x\ne y} \max\{|x|^\gamma, |y|^\gamma\}\frac{|f(x)-f(y)|}{|x-y|^\gamma} \le 2\|f\|_{\mathring{C}^{0,\gamma}}.
    \end{equation*}
\end{lemma}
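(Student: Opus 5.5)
The proof compares both quantities to the original seminorm $[|\cdot|^\gamma f]_{C^\gamma}$ through one elementary identity, followed by a case split on the relative size of $|x-y|$ and $\max\{|x|,|y|\}$. The $L^\infty$ parts coincide, so only the seminorms need comparing. By symmetry we may take $|y|\le|x|$. Write
\[
|x|^\gamma f(x)-|y|^\gamma f(y) = |y|^\gamma\big(f(x)-f(y)\big) + \big(|x|^\gamma-|y|^\gamma\big)f(x),
\]
and the analogous identity with the roles of $x,y$ swapped, whose first term carries $|x|^\gamma$. We use throughout that $\big||x|^\gamma-|y|^\gamma\big|\le \big||x|-|y|\big|^\gamma\le |x-y|^\gamma$ for $0<\gamma\le1$. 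This is the same fact used in the product rule; it also gives the constant exactly $1$, which is what makes the factor $2$ plausible.

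\textbf{Upper bounds.} The first identity gives directly
\[
\min\{|x|^\gamma,|y|^\gamma\}\frac{|f(x)-f(y)|}{|x-y|^\gamma}\le [|\cdot|^\gamma f]_{C^\gamma}+\|f\|_{L^\infty},
\]
so the min-form is at most $2\|f\|_{L^\infty}+[|\cdot|^\gamma f]_{C^\gamma}\le 2\|f\|_{\mathring C^{0,\gamma}}$. The second identity gives the same bound with $\max$ in place of $\min$. Because the max-form dominates the min-form, this is the only upper bound that actually needs checking. For $\gamma=1$ the seminorm is replaced by $\||x|\nabla f\|_{L^\infty}$. In that case the max-form upper bound follows by integrating along a path from $y$ to $x$ that avoids a neighbourhood of the origin. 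When $|x-y|\ge|x|/2$ we instead use the crude bound $2\|f\|_\infty\,|x|/|x-y|\le 4\|f\|_\infty$.

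\textbf{Lower bounds.} Again the identities give
\[
[|\cdot|^\gamma f]_{C^\gamma}\le \sup\min\{|x|^\gamma,|y|^\gamma\}\frac{|f(x)-f(y)|}{|x-y|^\gamma}+\|f\|_{L^\infty}.
\]
Hence $\|f\|_{\mathring C^{0,\gamma}}\le 2\|f\|_\infty+\sup\min(\cdots)\le 2\big(\|f\|_\infty+\sup\min(\cdots)\big)$, which yields the factor $\tfrac12$. Because the max-form is larger than the min-form, its lower bound follows immediately.

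\textbf{Main obstacle.} The delicate point is the $\gamma=1$ upper bound for the max-form, in the case where the segment $[x,y]$ passes near the origin. The plan there is to split into $|x-y|<|x|/2$ and $|x-y|\ge|x|/2$. In the first case the segment stays at distance at least $|x|/2$ from $0$, so the gradient bound applies along it. The second case is handled by the crude $L^\infty$ bound above. This matches the case split used in Step 1 of the proof of \Cref{thm:elliptic-existence}. The constants in this case may come out slightly worse than $2$, which is harmless for all uses in the paper.
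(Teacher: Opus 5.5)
Your argument for $\gamma\in(0,1)$ is the paper's. You use the same two identities and the bound $\bigl||x|^\gamma-|y|^\gamma\bigr|\le|x-y|^\gamma$, and you add the reduction ``max dominates min'', which the paper leaves as ``handled similarly''. The paper applies its identity unchanged at $\gamma=1$. That is, it implicitly takes the $\gamma=1$ seminorm to be the Lipschitz constant of $|x|f$. In that reading your first two paragraphs already cover $\gamma=1$, and no path argument is needed.

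Your $\gamma=1$ discussion uses the definition \eqref{def:angular-Lip}, and there the hedge that the constant ``may come out slightly worse than $2$'' should be a definite statement. With that definition the max-form upper bound with constant $2$ is \emph{false}. Take $f(x)=\tanh(\epsilon\log|x|)$. Then $\|f\|_{L^\infty}=1$, $\||x|\nabla f\|_{L^\infty}=\epsilon$, and $\|f\|_{\mathring{C}^{0,1}}=1+\epsilon$. For $x,y$ on a common ray with $|x|=e^{K/\epsilon}$ and $|y|=e^{-K/\epsilon}$, one gets $|x|\,|f(x)-f(y)|/|x-y|\ge 2\tanh K$. Hence the middle quantity is at least $3$, which exceeds $2(1+\epsilon)$ once $\epsilon<\frac12$.

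The sharp constant in that case is $3$. It follows from your second identity together with $|\nabla(|x|f)|\le\|f\|_{L^\infty}+\||x|\nabla f\|_{L^\infty}$. Your case split only yields $5$.

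The min-form bound does survive with constant $2$. A radial segment followed by an arc gives $\min\{|x|,|y|\}\,|f(x)-f(y)|\le\sqrt{1+\pi^2/4}\,\||x|\nabla f\|_{L^\infty}\,|x-y|$. So at $\gamma=1$, reducing to the max-form discards the true inequality and keeps the false one.

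Under \eqref{def:angular-Lip}, your lower-bound paragraph also controls only the Lipschitz constant of $|x|f$, not $\||x|\nabla f\|_{L^\infty}$. The bound you actually need is $\||x|\nabla f\|_{L^\infty}\le\sup_{x\ne y}\min\{|x|,|y|\}\,|f(x)-f(y)|/|x-y|$. It follows directly by letting $y\to x$ at points of differentiability.
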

\begin{proof}
    Let $x, y \in \RR^2$, $x \ne y$ and suppose with no loss of generality that $|x| \le |y|$. Then \eqref{eqn:C-ring-norm-eq} follows from the identity
    \begin{align*}
        \frac{|x|^\gamma f(x) - |y|^\gamma f(y) }{|x-y|^\gamma} = |x|^\gamma \frac{f(x) - f(y)}{|x-y|^\gamma} + \frac{|x|^\gamma - |y|^\gamma}{|x-y|^\gamma}f(y)
    \end{align*}
    and
    \begin{equation*}
        \frac{||x|^\gamma - |y|^\gamma|}{|x-y|^\gamma} \le 1.
    \end{equation*}
    The second inequality is handled similarly.
\end{proof}

\begin{lemma}
\label{lem:composition}
    Let $\Psi:\mathbb{R}^2\to\mathbb{R}^2$ be a bi-Lipschitz homeomorphism with $\Psi(0)=0$. Then, for $0<\gamma\le1$,
    \begin{equation*}
        \|f\circ\Psi\|_{\mathring{C}^{0,\gamma}}
        \le C_\gamma\left[1+\big(\operatorname{Lip}(\Psi)\operatorname{Lip}(\Psi^{-1})\big)^\gamma\right]
        \|f\|_{\mathring{C}^{0,\gamma}}.
    \end{equation*}
\end{lemma}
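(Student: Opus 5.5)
The bound has two parts: the $L^\infty$ bound, which is immediate, and the weighted H\"older seminorm. For the seminorm I work with the equivalent form from \eqref{eqn:C-ring-norm-eq}, namely
\[
\sup_{x\ne y}\min\{|x|^\gamma,|y|^\gamma\}\frac{|f(x)-f(y)|}{|x-y|^\gamma}.
\]
Its advantage is that it interacts cleanly with a change of variables. Throughout, set $L_1=\operatorname{Lip}(\Psi)$ and $L_2=\operatorname{Lip}(\Psi^{-1})$.

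\textbf{Step 1: the sup norm.} Since $\Psi$ is a bijection of $\mathbb{R}^2$, $\|f\circ\Psi\|_{L^\infty}=\|f\|_{L^\infty}$.

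\textbf{Step 2: comparing the weights.} Because $\Psi(0)=0$, the bi-Lipschitz bounds give, for every $x$,
\[
L_2^{-1}|x|\le|\Psi(x)|\le L_1|x|.
\]
The same bounds give $|\Psi(x)-\Psi(y)|\ge L_2^{-1}|x-y|$. Fix $x\ne y$ and set $X=\Psi(x)$, $Y=\Psi(y)$. Then
\[
\min\{|x|^\gamma,|y|^\gamma\}\frac{|f(X)-f(Y)|}{|x-y|^\gamma}
\le L_2^{\gamma}\min\{|X|^\gamma,|Y|^\gamma\}\cdot L_2^{\gamma}\frac{|f(X)-f(Y)|}{|X-Y|^\gamma}.
\]
This bounds the weighted seminorm of $f\circ\Psi$ by $L_2^{2\gamma}$ times the equivalent seminorm of $f$. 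That factor is not yet of the form $(L_1L_2)^\gamma$.

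\textbf{Step 3: sharpening the constant.} To reach the stated constant I split into two cases.

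\emph{Near-diagonal case}, $|x-y|<\tfrac12|x|$. Here $|x|\approx|y|$ and $|X|\approx|Y|$, so the weight can be transported using $|x|\le L_2|X|$. The increment is handled by $|x-y|\ge L_1^{-1}|X-Y|$ together with the reverse estimate.

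\emph{Far case}, $|x-y|\ge\tfrac12|x|$. Here the weight is at most $2^\gamma$, so the quotient is bounded by $2^{1+\gamma}\|f\|_{L^\infty}$. This case contributes only to the ``$1+$'' part of the constant.

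In the near-diagonal case, the combined distortion of the ratio $|x|/|x-y|$ under $\Psi$ is exactly $L_1L_2$. Hence
\[
\frac{|X|}{|X-Y|}\le L_1L_2\frac{|x|}{|x-y|},
\]
which yields the factor $(L_1L_2)^\gamma$.

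\textbf{Where care is needed.} The main delicate point is Step 3. One must check that the weight comparison and the increment comparison combine into the scale-invariant ratio $|x|/|x-y|$, rather than into separate powers of $L_2$. The case split is what makes this work: off the near-diagonal region the sup norm term absorbs everything.

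Finally, since $\mathring{C}^{0,\gamma}$ is defined through the equivalent norm, all constants are absorbed into $C_\gamma$ by \eqref{eqn:C-ring-norm-eq}. The case $\gamma=1$ is identical.
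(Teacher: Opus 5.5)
Your argument has two direction errors. The first creates the ``obstruction'' that Step~3 is built to remove. To bound $1/|x-y|^\gamma$ from above by a multiple of $1/|X-Y|^\gamma$, you need a \emph{lower} bound on $|x-y|$, namely $|X-Y|\le L_1|x-y|$. The inequality you invoke, $|X-Y|\ge L_2^{-1}|x-y|$, gives $|x-y|\le L_2|X-Y|$, which points the wrong way. The resulting claim $\frac{|f(X)-f(Y)|}{|x-y|^\gamma}\le L_2^{\gamma}\frac{|f(X)-f(Y)|}{|X-Y|^\gamma}$ is false. Take $\Psi(x)=\lambda x$ with $\lambda>1$, so $L_2=\lambda^{-1}$. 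Step~2 would then say that the weighted seminorm of $f(\lambda\,\cdot)$ is at most $\lambda^{-2\gamma}$ times that of $f$, contradicting the scale invariance of $\mathring{C}^{0,\gamma}$. With the correct inequality, and with $\min\{|x|,|y|\}\le L_2\min\{|X|,|Y|\}$ (which follows from $|x|\le L_2|X|$ and $|y|\le L_2|Y|$), Step~2 already reads
\[
\min\{|x|^\gamma,|y|^\gamma\}\frac{|f(X)-f(Y)|}{|x-y|^\gamma}\le (L_1L_2)^\gamma\,\min\{|X|^\gamma,|Y|^\gamma\}\frac{|f(X)-f(Y)|}{|X-Y|^\gamma}
\]
for \emph{every} pair $x\ne y$. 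Combined with $\|f\circ\Psi\|_{L^\infty}=\|f\|_{L^\infty}$ and \eqref{eqn:C-ring-norm-eq}, this is the whole proof, and it is exactly the paper's one-line argument.

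Step~3 repeats the problem in its key display. The inequality $\frac{|X|}{|X-Y|}\le L_1L_2\frac{|x|}{|x-y|}$ controls the image ratio by the original one. To bound the seminorm of $f\circ\Psi$ by that of $f$ you need the reverse, $\frac{\min\{|x|,|y|\}}{|x-y|}\le L_1L_2\frac{\min\{|X|,|Y|\}}{|X-Y|}$. Note also that the distortion factor is at most $L_1L_2$, not ``exactly'' $L_1L_2$. The ingredients you name just before the display, $|x|\le L_2|X|$ and $|x-y|\ge L_1^{-1}|X-Y|$, are the right ones and do give the needed inequality. However, they give it with no restriction to $|x-y|<\tfrac12|x|$, so the near-diagonal/far split is unnecessary. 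Your far-case bound $2^{1+\gamma}\|f\|_{L^\infty}$ is correct but superfluous. Finally, if you transport the weight using only $|x|\le L_2|X|$, you land on $|X|^\gamma$, which need not equal $\min\{|X|^\gamma,|Y|^\gamma\}$. Either use both pointwise bounds, as above, or pass to the max-weighted equivalent norm.
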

\begin{proof}
    By \eqref{eqn:C-ring-norm-eq}, it suffices to estimate the equivalent seminorm involving $\min\{|x|^\gamma,|y|^\gamma\}$. Since $\Psi(0)=0$ and $\Psi$ is bi-Lipschitz,
    \begin{equation*}
        |x|\le \operatorname{Lip}(\Psi^{-1})|\Psi(x)|,
        \qquad
        |\Psi(x)-\Psi(y)|\le \operatorname{Lip}(\Psi)|x-y|.
    \end{equation*}
    Hence
    \begin{align*}
       &\min\{|x|^\gamma,|y|^\gamma\}\frac{|f(\Psi(x))-f(\Psi(y))|}{|x-y|^\gamma}\\
       &\qquad\le \big(\operatorname{Lip}(\Psi)\operatorname{Lip}(\Psi^{-1})\big)^\gamma
       \min\{|\Psi(x)|^\gamma,|\Psi(y)|^\gamma\}
       \frac{|f(\Psi(x))-f(\Psi(y))|}{|\Psi(x)-\Psi(y)|^\gamma}.
    \end{align*}
    Together with $\|f\circ\Psi\|_{L^\infty}=\|f\|_{L^\infty}$, another application of \eqref{eqn:C-ring-norm-eq} yields the claim.
\end{proof}

The same norm equivalence also shows that if we restrict the $\mathring{C}^{0,\gamma}$ norm to annuli $A_{r_1, r_2} = \{ r_1\le |x| \le r_2\}$ where $0 < r_1 < r_2$, then it is equivalent to the ${C}^{0,\gamma}$ norm. Precisely, define
\begin{equation*}
  \|f\|_{\mathring{C}^{0,\gamma}(A_{r_1, r_2})} =   \|f\|_{L^\infty(A_{r_1, r_2})} + \sup_{\substack{x\ne y\\ x, y \in A_{r_1, r_2}}} \max\{|x|^\gamma, |y|^\gamma\}\frac{|f(x) - f(y)|}{|x-y|^\gamma}
\end{equation*}
Then
\begin{equation}
\label{ineq:holder-equiv}
   \min\{1,r_1^\gamma\} \|f\|_{{C}^{0,\gamma}(A_{r_1,r_2})} \le  \|f\|_{\mathring{C}^{0,\gamma}(A_{r_1, r_2})} \le \max\{1,r_2^\gamma\} \|f\|_{{C}^{0,\gamma}(A_{r_1,r_2})}
\end{equation}

{\small
\bibliographystyle{plain}
\bibliography{refs.bib}
}

\end{document}